\newcounter{nodecount}
\providecommand{\nodecounter}[1]{
	\setcounter{nodecount}{-1}
	\foreach \iter in #1{
		\stepcounter{nodecount}
	}
}
\DeclareMathOperator{\rank}{rank}
\DeclareMathOperator{\ncrank}{nc-rank}
\DeclareMathOperator{\kerL}{\ker_{\rm L}}
\DeclareMathOperator{\kerR}{\ker_{\rm R}}
\DeclareMathOperator{\proj}{proj}
\newcommand{\F}{\mathbf{F}}
\newcommand{\Q}{\mathbf{Q}}
\newcommand{\A}{A_{\alpha \beta}}
\newcommand{\x}{x_{\alpha \beta}}
\newcommand{\labeltt}{\texttt{label}}
\newcommand{\perpab}{ {\perp_{\alpha \beta}} }
\newcommand{\ainm}{\alpha \in [\mu]}
\newcommand{\binn}{\beta \in [\nu]}
\newcommand{\bit}{\texttt{bit}}
\newcommand{\CL}{C_{\rm L}}
\newcommand{\CR}{C_{\rm R}}
\newcommand{\calP}{\mathcal{P}}
\newcommand{\calQ}{\mathcal{Q}}
\newcommand{\calR}{\mathcal{R}}
\newcommand{\calT}{\mathcal{T}}
\newcommand{\Ni}{{\rm N}_{\rm inner}}
\newcommand{\No}{{\rm N}_{\rm outer}}
\newcommand{\Ai}{{\rm A}_{\rm init}}
\newcommand{\Al}{{\rm A}_{\rm last}}
\newtheorem{thm}{\bfseries Theorem}[section] 
\newtheorem{lem}[thm]{\bfseries Lemma} 
\newtheorem{prop}[thm]{\bfseries Proposition} 
\newtheorem{cor}[thm]{\bfseries Corollary}
\newtheorem*{cl*}{\bfseries Claim}
\theoremstyle{definition}
\crefname{thm}{Theorem}{Theorems}
\crefname{prop}{Proposition}{Propositions}
\crefname{lem}{Lemma}{Lemmas}
\crefname{exmp}{Example}{Examples}
\crefname{cor}{Corollary}{Corollarys}
\crefname{cl}{Claim}{Claims}
\crefname{remark}{Remark}{Remarks}
\crefname{section}{Section}{Sections}
\begin{document}
	\title{A combinatorial algorithm for computing the rank of a generic partitioned matrix with $2 \times 2$ submatrices\thanks{A preliminary version of this paper has appeared in the proceedings of the 21st Conference on Integer Programming and Combinatorial Optimization (IPCO 2020).
	This work was done while Yuni Iwamasa was at National Institute of Informatics.}}
	\author{Hiroshi Hirai\thanks{Department of Mathematical Informatics,
			Graduate School of Information Science and Technology,
			The University of Tokyo, Tokyo 113-8656, Japan.
			Email: \texttt{hirai@mist.i.u-tokyo.ac.jp}} \and 
			Yuni Iwamasa\thanks{Department of Communications and Computer Engineering, Graduate School of Informatics, Kyoto University, Kyoto 606-8501, Japan.
			Email: \texttt{iwamasa@i.kyoto-u.ac.jp}}}
	\date{\today}
	\maketitle
	
\begin{abstract}
	In this paper, we consider the problem of computing the rank of a block-structured symbolic matrix (a generic partitioned matrix)
	$A = (\A x_{\alpha \beta})$,
	where $\A$ is a $2 \times 2$ matrix over a field $\F$ and $x_{\alpha \beta}$ is an indeterminate for $\alpha = 1,2,\dots, \mu$ and $\beta = 1,2, \dots, \nu$.
	This problem
	can be viewed as an algebraic generalization of the bipartite matching problem
	and
	was considered by Iwata and Murota (1995).
	Recent interests in this problem lie in the connection with non-commutative Edmonds' problem by Ivanyos, Qiao, and Subrahamanyam (2018)
	and Garg, Gurvits, Oliveiva, and Wigderson (2019),
	where a result by Iwata and Murota implicitly states that the rank and
	non-commutative rank (nc-rank) are the same for this class of symbolic matrices.
	
	The main result of this paper is a simple and combinatorial $O((\mu \nu)^2 \min \{ \mu, \nu \})$-time algorithm for computing the symbolic rank of a $(2 \times 2)$-type generic partitioned matrix of size $2\mu \times 2\nu$.
	Our algorithm is inspired by the Wong sequence algorithm by Ivanyos, Qiao, and Subrahamanyam for the nc-rank of a general symbolic matrix,
	and requires no blow-up operation, 
	no field extension,
	and no additional care for bounding the bit-size.
	Moreover it naturally provides a maximum rank completion of $A$ for an arbitrary field $\F$.
\end{abstract}
\begin{quote}
	{\bf Keywords: }
	generic partitioned matrix, Edmonds' problem, non-commutative Edmonds' problem,
	maximum rank completion problem.
\end{quote}
	
\section{Introduction}\label{sec:intro}
The maximum matching problem in a bipartite graph $G$ has 
a natural algebraic interpretation;
it amounts to the symbolic rank computation of the matrix $A$
defined by $(A)_{ij} := x_{ij}$ if $ij \in E(G)$
and zero otherwise,
where $x_{ij}$ is a variable for each edge $ij$
and the row and column sets of $A$
are identified with the color classes of $G$.
Such an algebraic interpretation is also known for other matching-type combinatorial optimization problems
such as linear matroid intersection, nonbipartite matching, 
and their generalizations (e.g., linear matroid matching); see~\cite{BSBM/L89,JLMS/T47}.
{\em Edmonds' problem}~\cite{JRNBSS/E67} is 
a general algebraic formulation, which asks 
to compute the rank of a symbolic matrix $A$
represented by
\begin{align}\label{eq:linear A}
	A = A_1 x_1 + A_2 x_2 + \cdots + A_k x_k.
\end{align} 
Here $A_i$ is a matrix over a field $\F$ and $x_i$ is a variable for $i = 1,2,\dots,k$.
Although a randomized polynomial-time algorithm for Edmonds' problem
is known (if $|\F|$ is large)~\cite{FCT/L79,JACM/S80},
a deterministic polynomial-time algorithm is not known,
which is one of the prominent open problems in theoretical computer science (see e.g.,~\cite{CC/KI04}).
Known polynomial-time algorithms for the above-mentioned matching-type problems
can be viewed as solutions for special Edmonds' problems.

The present article addresses the rank computation (Edmonds' problem) of 
a matrix of the following block-matrix structure
\begin{align}\label{eq:A}
	A =
	\begin{pmatrix}
		A_{11} x_{11} & A_{12} x_{12} & \cdots & A_{1 \nu} x_{1 \nu}\\
		A_{21} x_{21} & A_{22} x_{22} & \cdots & A_{2\nu} x_{2 \nu}\\
		\vdots & \vdots & \ddots & \vdots \\
		A_{\mu 1} x_{\mu 1} & A_{\mu 2} x_{\mu 2} & \cdots & A_{\mu \nu} x_{\mu \nu}
	\end{pmatrix},
\end{align}
where $\A$ is a $2 \times 2$ matrix over a field $\F$
and
$x_{\alpha \beta}$ is a variable for $\alpha = 1,2,\dots, \mu$ and $\beta = 1,2,\dots, \nu$.
Recall that bipartite matching is precisely the case where each $\A$ is a $1 \times  1$ matrix.
Such matrices, which we call {\em $(2 \times 2)$-type generic partitioned matrices}, 
were considered in detail by Iwata and Murota~\cite{SIMAA/IM95}, 
subsequent to the study on partitioned matrices of general type~\cite{SIMAA/IIM94}.
They established a min-max formula (i.e., good characterization)
for the rank of this class of matrices,
which involves
the minimization of a submodular function 
on the lattice of vector subspaces. 
A combinatorial polynomial-time rank computation is not known and desired.
Our main result solves this issue.
\begin{thm}\label{thm:main}
	There exists a combinatorial $O((\mu\nu)^2 \min\{ \mu, \nu \})$-time
	algorithm for Edmonds' problem for a $(2 \times 2)$-type generic partitioned matrix of the form~\eqref{eq:A}.
\end{thm}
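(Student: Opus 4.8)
The plan is to run a single combinatorial augmentation process that simultaneously maintains a completion of $A$ of some rank $r$ (a lower bound on $\rank A$) and, upon termination, produces a vector subspace attaining the Iwata--Murota min-max bound~\cite{SIMAA/IM95} (an upper bound on $\rank A$, equal to $r$); the engine of the process is modelled on the Wong sequence of Ivanyos, Qiao, and Subrahmanyam for the nc-rank, specialized so as to exploit the $2\times 2$ block structure. As a by-product this also reproves $\rank A = \ncrank A$ for this class. First I would recast $A$ operator-theoretically: regard each left index $\alpha\in[\mu]$ as carrying a $2$-dimensional space $U_\alpha\cong\F^2$, each right index $\beta\in[\nu]$ as carrying $V_\beta\cong\F^2$, and the block $A_{\alpha\beta}$ as a linear map $V_\beta\to U_\alpha$. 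The decisive structural point of the $2\times 2$ case is that every subspace arising along a Wong-type iteration is represented \emph{locally}: each $\alpha$ (resp.\ $\beta$) only ever needs to be tagged with a subspace of its own $\F^2$, i.e.\ with a bounded amount of data. This is exactly what lets us replace the blow-up / tensoring step of the general nc-rank algorithm by a finite combinatorial search.

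The heart of the algorithm is an augmenting-walk routine reminiscent of bipartite matching but run on a vector-labelled bipartite structure. I would maintain a current partial solution — a choice, block by block, of rank-$\le 2$ data together with vector labels on the indices — certifying the current lower bound $r$; then build an auxiliary digraph on a vertex set of size $O(\mu\nu)$ whose arcs encode, via the maps $A_{\alpha\beta}$, the admissible ways of extending the solution. One then shows that a source-to-sink walk yields a rank-increasing update: one pushes vectors along the walk, updating the local labels, and the rank strictly increases. If no such walk exists, the set of vertices reachable from the source is shown to encode a subspace of the ambient space on which $A$ acts that attains the Iwata--Murota min-max bound — equivalently a maximum vanishing/shrunk subspace in the sense of the nc-rank theory — so that the current $r$ is optimal and the algorithm halts.

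Correctness then follows by combining the two cases: the procedure terminates, and at termination the reachability certificate matches the current solution, proving $r=\rank A=\ncrank A$. Since every vector label is obtained by applying the given maps $A_{\alpha\beta}$ and taking $\F$-linear combinations, instantiating each indeterminate $x_{\alpha\beta}$ accordingly produces a genuine maximum-rank completion of $A$ over the original field $\F$: no field extension is invoked, and over $\Q$ all intermediate quantities are $\F$-linear combinations of the entries of the $A_{\alpha\beta}$, so their bit-sizes stay polynomially bounded with no extra care. For the running time, $\rank A\le 2\min\{\mu,\nu\}$ gives $O(\min\{\mu,\nu\})$ successful augmentations, and an amortised analysis should show that between two augmentations only $O(\mu\nu)$ local label updates occur, each update together with the reachability search on the $O(\mu\nu)$-size auxiliary digraph costing $O(\mu\nu)$ arithmetic operations over $\F$; multiplying yields the claimed $O((\mu\nu)^2\min\{\mu,\nu\})$ bound.

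I expect the main obstacle to be the failed-search case: proving that the reachable set always yields a subspace that is tight for the Iwata--Murota min-max formula. This requires pinning down how their submodular function on the lattice of vector subspaces decomposes over the $2\times 2$ blocks and matching a minimizer of it to the combinatorial reachable set — and it is precisely here that restricting to $2\times 2$ blocks, hence to per-index subspaces of $\F^2$, is essential, since that is what keeps the local data finite and the augmentation combinatorial.
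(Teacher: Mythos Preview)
Your high-level plan is broadly aligned with the paper's: maintain a combinatorial object (the paper calls it a \emph{matching}, an edge set $I$ with $\rank A_I > \rank A_{I\setminus\{\alpha\beta\}}$ for every $\alpha\beta\in I$), run a Wong-sequence-style labelling to find either an augmenting structure or an optimality witness for the Iwata--Murota formula, and iterate at most $O(\min\{\mu,\nu\})$ times. The label data are indeed local (one or two $1$-dimensional subspaces of each $U_\alpha,V_\beta$), and the failed-search case does produce the minimizer directly.

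However, you have the difficulty exactly backwards. The optimality-witness step---your ``main obstacle''---is in fact short: once the labelling stabilises, the intersections $X_\alpha^*=\bigcap_{X\in\labeltt(\alpha)}X$ and sums $Y_\beta^*=\sum_{Y\in\labeltt(\beta)}Y$ satisfy $A_{\alpha\beta}(X_\alpha^*,Y_\beta^*)=\{0\}$ almost by construction, and the dimension count is an invariant of the labelling (this is the paper's Lemma~\ref{lem:labeling}). The genuine obstacle is the step you dismiss in one clause, ``one pushes vectors along the walk, updating the local labels, and the rank strictly increases.'' An augmenting space-walk for a matching $I$ is not a simple path: vertices may be visited twice, the walk alternates between \emph{outer} pieces (through non-matching edges and isolated rank-$2$ matching edges) and \emph{inner} pieces (through rank-$1$ components of $I$), and naively swapping edges along it can destroy the valid-labelling condition (VL) that certifies $\rank A_I=r(I)$. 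The paper's entire Section~\ref{sec:augmentation} is devoted to this: it relaxes ``matching'' to ``quasi-matching,'' introduces a potential $\theta(I,\calT)$ bounded by $O(|E|)$, and performs a lengthy case analysis (cycle vs.\ path components, simple vs.\ non-simple last outer walk, several elimination/propagation subroutines) to show that each modification either finishes or strictly decreases $\theta$. This is where the $O(|E|^2)=O((\mu\nu)^2)$ per-augmentation cost comes from---not from an amortised label count as you suggest.

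So your plan would not go through as written: you need a precise invariant on the partial solution (the (Deg), (Path), (Cycle), (VL) characterisation of a matching), a precise notion of augmenting space-walk respecting that invariant, and a nontrivial argument that augmentation can be carried out while preserving it. Without these, the sentence ``the rank strictly increases'' is an assertion, not a proof.
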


This result links the recent development of Edmonds' problem in a noncommutative setting.
The {\em noncommutative Edmonds' problem}~\cite{CC/IQS17} asks to compute 
the rank of a matrix of the form~\eqref{eq:linear A},
where 
$x_i$ and $x_j$ are supposed to be noncommutative, 
i.e., $x_ix_j \neq x_jx_i$.
In this setting, the rank concept can be defined 
(via the free skew field or the inner rank of a matrix over a ring)
and is called {\em noncommutative rank} or {\em nc-rank}.
Nc-rank is an upper bound of (usual) rank. 
Surprisingly, the nc-rank can be computed in deterministic polynomial time. 
Algorithms to do this were proposed by Garg, Gurvits, Oliveira, and Wigderson~\cite{FCT/GGOW20}
for the case of $\F = \Q$
and by Ivanyos, Qiao, and Subrahmanyam~\cite{CC/IQS18} for an arbitrary field.
The former algorithm ({\em operator scaling}) 
is an analytical algorithm motivated by quantum information theory.
The latter, which we call the {\em IQS-algorithm}, 
is an augmenting-path type algorithm.
It 
utilizes a {\em Wong sequence}~\cite{JCSS/IKQS15}
---a vector-space analogue of 
an alternating walk--- 
and the formula of nc-rank earlier proved by Fortin and Reutenauer~\cite{SLC/FR04}.
In fact, for $(2 \times 2)$-type generic partitioned matrices, 
the rank formula proved by Iwata and Murota 
is essentially the same as the nc-rank formula by Fortin and Reutenauer.
This means that the rank and nc-rank are the same for this class of matrices and that the polynomial solvability follows from these results.
Thus, the real contribution of this paper is in the
term ``combinatorial'' in the theorem, which is explained as follows.

Our proposed algorithm is viewed as a combinatorial enhancement of the IQS-algorithm 
for $(2 \times 2)$-type generic partitioned matrices.
As mentioned, the IQS-algorithm is 
an augmenting-path type algorithm: 
Given a {\em substitution} $\tilde A$ obtained from $A$ 
by substituting a value in $\F$ to each $x_i$, 
construct the Wong sequence for $(A,\tilde A)$, 
which is an analogue of augmenting path search in the auxiliary graph.
If an augmenting path exists,
then one can find another substitution $\tilde{A}'$ 
with $\rank \tilde{A}' > \rank \tilde{A}$ and repeat it with updating $\tilde{A} \leftarrow \tilde{A}'$.
If an augmenting path does not exist,
then one obtains a certificate of optimality of the nc-rank formula. 
Here, for reaching $\rank \tilde{A} = \ncrank A$, 
the algorithm conducts the {\em blow-up} operation,
which
replaces $A$ with a larger matrix $A^{(d)} := \sum_{i=1}^m A_i \otimes X_i$ for $d \times d$ matrices $X_1,X_2,\ldots,X_k$ of variable entries.
It is known that $\ncrank A$ is equal to 
$1/d$ times the rank of a substitution $\tilde A^{(d)}$ for some $d$ (if $|\F|$ is large).
The blow-up steps (and field extensions of $\F$) make the algorithm considerably complicated and slow.
For our special case, the rank and nc-rank are equal, 
and therefore a {\em blow-up-free} algorithm is expected and desirable; 
a naive application of the IQS-algorithm cannot avoid the blow-up.

Our algorithm is the first blow-up-free algorithm that can solve Edmonds' problem
for the class of $(2 \times 2)$-type generic partitioned matrices.
The key concept that we introduce in this paper is a {\em matching}
in this setting.
It is actually a $2$-matching in the graph consisting of edges $\alpha \beta$
with nonzero $A_{\alpha \beta}$, 
which inherits the $2 \times 2$ structure of our matrix $A$ and 
provides a canonical substitution of $A$.
Incorporating the idea of Wong sequence, we introduce
an {\it augmenting space-walk} for a matching. 
Then our algorithm continues analogously to the augmenting path algorithm, as expected.
It requires no blow-up operation, 
no field extension,
and no additional care for bounding the bit size.
Moreover it naturally provides a maximum rank completion (substitution) $\tilde A$ of $A$ for an arbitrary field $\F$.
This reveals that
the maximum rank completion problem for a $(2 \times 2)$-type generic partitioned matrix is polynomially solvable for arbitrary $\F$,
while this problem is known to be NP-hard in general~\cite{JCSS/BFS99}.

\paragraph{Related work.}
It is an interesting research direction to construct a polynomial-time
blow-up-free algorithm
for general matrices $A$ with $\rank A = \ncrank A$.
Such an algorithm can decide whether rank and nc-rank are equal,
which leads to
a solution of (commutative) Edmonds' problem.
Indeed,
such an algorithm
naturally provides polynomial-time solvability of the full-rank decision version of Edmonds' problem.
Note that the family of linear independent column sets of $A$ forms a matroid, and the full-rank decision version of Edmonds' problem can play a role as an independence oracle of the matroid.
Thus, by a greedy algorithm for the matroid, we can obtain a basis in polynomial time; the size of the basis is equal to rank $A$.

A representative example of a matrix $A$ with $\rank A = \ncrank A$ is a matrix
such that each $A_i$ in~\eqref{eq:linear A} is a rank-1 matrix.
In this case,
the rank computation is equivalent 
to the linear matroid intersection problem~\cite{BSBM/L89}.
Edmonds' matroid intersection algorithm becomes obviously blow-up-free.
In fact, it can naturally be interpreted as the Wong sequence~\cite{bachelor/I18}.
Ivanyos, Karpinski, Qiao, and Santha~\cite{JCSS/IKQS15}
gave a Wong-sequence-based blow-up-free algorithm for 
matrices $A$ having an ``implicit'' rank-1 expression, 
that is,  $A$ becomes rank-1 summands
by some (unknown) linear transformation of variables.

Computation of nc-rank is formulated 
as submodular function minimization on 
the modular lattice of vector subspaces. 
Based on this, Hamada and Hirai~\cite{arxiv/HH17,SIAGA/HH21} 
developed a conceptually different algorithm from~\cite{FCT/GGOW20} and~\cite{CC/IQS18}.  
Via an analogue of the Lov\'asz extension, 
they solved the problem as a geodesically-convex optimization on a CAT(0)-space.
For the case of a $(2 \times 2)$-type generic partitioned matrix, 
the submodular function is defined on the direct product of modular lattices of rank-2 with infinite size. 
Following a pioneering work by Kuivinen~\cite{DO/K11},
Fujishige, Kir\'aly, Makino, Takazawa, and Tanigawa~\cite{EGRES/FKMTT14} 
demonstrated the oracle tractability of submodular function minimization on {\em diamond}, which is 
a direct product of modular lattices of rank-2 with ``finite'' size. 

A weighted analogue of Edmonds' problem is computation of 
the degree of the determinant of a matrix of the form~\eqref{eq:linear A}
such that 
each $A_i = A_i(t)$ is a polynomial matrix over $t$.
This algebraically abstracts the weighted versions
of combinatorial optimization problems, 
such as the maximum-weight bipartite matching problem. 
Its noncommutative extension was studied in~\cite{SIAGA/H19} (see also~\cite{ICALP/O20}).
It may also be interesting to extend our results
to such a weighted version.

\paragraph{Organization.}
The remainder of this paper is organized as follows.
In \cref{sec:matching},
we introduce the concept of matching for a $(2 \times 2)$-type generic partitioned matrix,
which is an algebraic generalization of bipartite matching.
We also provide a combinatorial and algebraic characterization of a matching
and its useful properties.
The formal description of our proposed algorithm is given in \cref{sec:algorithm,sec:augmentation}.
We introduce an {\it augmenting space-walk} for a matching (\cref{subsec:augmenting space-walk}),
and develop an algorithm for finding an augmenting space-walk (\cref{subsec:finding})
and an augmentation algorithm (\cref{sec:augmentation}).
In \cref{sec:discussion},
we present concluding remarks.

\section{Matching}\label{sec:matching}
In this section,
we introduce the concept of {\it matching},
which plays a central role in devising our algorithm.
We give an algebraic and combinatorial characterization of a matching.
This provides several nontrivial properties of matchings,
which will be used in our algorithm.

\subsection{Notations}
For a positive integer $k$,
we denote $\{1,2,\dots,k\}$ by $[k]$.
A $p \times q$ matrix $B$ over a field $\F$
is regarded
as the bilinear map defined by
$B(u, v) := u^\top Bv$
for $u \in \F^p$ and $v \in \F^q$.
We denote by $\kerL(B)$ and $\kerR(B)$
the left and right kernels of $B$,
respectively.

Let $A$ be
a $(2 \times 2)$-type generic partitioned matrix $A$ of the form~\eqref{eq:A}.
The matrix $A$ is regarded as
a matrix over the field $\F(x)$ of rational functions with variables $\x$ for $\ainm$ and $\binn$.
Symbols $\alpha$, $\beta$, and $\gamma$ 
are used to represent elements of $[\mu]$, $[\nu]$, and $[\mu] \sqcup [\nu]$, respectively,
where $\sqcup$ denotes the direct sum.
We often drop ``$\in [\mu]$'' from the notation of ``$\alpha \in [\mu]$'' if it is clear from the context.
For each $\alpha$ and $\beta$, 
consider 2-dimensional $\F$-vector spaces $U_{\alpha} =\F^2$ and $V_{\beta} = \F^2$. 
Each submatrix $A_{\alpha \beta}$ is considered 
as a bilinear form $U_{\alpha} \times V_{\beta} \to \F$.
Let $U := \F^{2\mu} = \bigoplus_{\alpha} U_\alpha$ 
and $V := \F^{2\nu} = \bigoplus_{\beta} V_\beta$.

We define the (undirected) bipartite graph $G := ([\mu], [\nu]; E)$
by $E := \{ \alpha \beta \mid \A \neq O \}$.
For $I \subseteq E$,
let $A_I$ denote
the matrix obtained from $A$ by replacing  
each submatrix $\A$ with $\alpha \beta \not\in I$ by the $2 \times 2$ zero matrix.
An edge $\alpha \beta \in E$ is said to be {\it rank-$k$} $(k=1,2)$
if $\rank \A = k$.
For notational simplicity, 
the subgraph $([\mu], [\nu]; I)$ for $I \subseteq E$
is also denoted by~$I$. 
For a node $\gamma$, let $\deg_I(\gamma)$ denote the degree of $\gamma$ in $I$, i.e.,
the number of edges in $I$ incident to $\gamma$. 
A connected component of $I$ is said to be {\it rank-1}
if it contains a rank-1 edge.
An edge $\alpha \beta \in I$ is said to be {\it isolated}
if $\deg_I(\alpha) = \deg_I(\beta) = 1$.

\subsection{Definition and characterization}\label{subsec:def and chara}
An edge subset $I \subseteq E$
is called a {\it matching}
if
\begin{align}\label{eq:def matching}
	\rank A_I > \rank A_{I \setminus \{ \alpha \beta \}}
\end{align}
holds
for any $\alpha \beta \in I$.
A matching $I$ is said to be {\it maximum}
if $\rank A_I \geq \rank A_{I'}$ for any matching $I'$,
or equivalently, if $\rank A = \rank A_I$ holds.
This matching concept generalizes bipartite matching;
for a matrix $A$ of the form~\eqref{eq:A} with $1 \times 1$ blocks $\A$
and the corresponding bipartite graph $G$ to $A$,
an edge subset satisfies~\eqref{eq:def matching} if and only if it is a bipartite matching of $G$.

\cref{thm:chara} below provides a characterization of a matching $I$
and a simple combinatorial rank formula of $A_I$.
The characterization consists of the following four conditions
for an edge subset $I \subseteq E$:
\begin{description}
	\item[(Deg)] $\deg_I(\gamma) \leq 2$ for each node $\gamma$ of $G$.
\end{description}
Suppose that $I$ satisfies this condition. Then each connected component of $I$ forms a path or a cycle.
Thus $I$ is 2-edge-colorable, namely,
there are two edge classes such that any two incident edges are in different classes.
An edge in one color class is called a {\it $+$-edge},
and an edge in the other color class is called a {\it $-$-edge}.
For a path component $C$ of $I$,
an {\it end edge}
is an edge $\alpha \beta \in C$ with $\deg_I(\alpha) = 1$ or $\deg_I(\beta) = 1$.
\begin{description}
	\item[(Path)]
	For each non-isolated path component of $I$,
	both the end edges are rank-1.
	\item[(Cycle)]
	Each cycle component of $I$
	has both a rank-1 $+$-edge and rank-1 $-$-edge.
\end{description}

A {\em valid labeling} for $I$ is a node-labeling that assigns
two distinct 1-dimensional subspaces to each node,
$U_{\alpha}^+, U_{\alpha}^- \subseteq U_{\alpha}$
for $\alpha$
and $V_{\beta}^+, V_{\beta}^- \subseteq V_{\beta}$ for $\beta$,
such that
for each edge $\alpha \beta \in I$, it holds that
\begin{align}
	&A_{\alpha \beta}(U_{\alpha}^+, V_{\beta}^-) = A_{\alpha
		\beta}(U_{\alpha}^-, V_{\beta}^+) = \{0\},\label{eq:+-}\\
	&(\kerL(A_{\alpha \beta}), \kerR(A_{\alpha \beta})) =
	\begin{cases}
		(U_{\alpha}^+, V_{\beta}^+) & \text{if $\alpha \beta$ is a rank-1 $+$-edge},\\
		(U_{\alpha}^-, V_{\beta}^-) & \text{if $\alpha \beta$ is a rank-1 $-$-edge}.
	\end{cases}\label{eq:++ --}
\end{align}
\begin{description}
	\item[(VL)]
	$I$ has a valid labeling.
\end{description}
By~\eqref{eq:+-} and~\eqref{eq:++ --},
one of the labels $U_\alpha^+$ and $U_\alpha^-$ ($V_\beta^+$ and $V_\beta^-$)
is uniquely determined from the label of a rank-1 end edge
along the path to it.
Therefore,
if $I$ satisfies (Deg), (Path), and (Cycle),
then two labels on any vertex with degree 2
are uniquely determined.
(VL) requires that they are different.

For the rank formula,
we define
\begin{align*}
	r(I) := |I| + \textrm{the number of isolated rank-2 edges in $I$}.
\end{align*}
We are now ready to describe the characterization and rank formula;
the proof is given in \cref{subsubsec:thm:chara}.
\begin{thm}\label{thm:chara}
	An edge subset $I$ is a matching
	if and only if
	$I$ satisfies
	{\rm (Deg)}, {\rm (Path)}, {\rm (Cycle)}, and {\rm (VL)}.
	In addition,
	for a matching $I$, it holds that $\rank A_I = r(I)$.
\end{thm}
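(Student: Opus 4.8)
The plan is to prove both directions of the characterization together with the rank formula, and the natural strategy is induction on $|I|$, peeling off one edge at a time in a way that respects the path/cycle structure. First I would establish the ``only if'' direction: assuming $I$ is a matching, I would show (Deg), (Path), (Cycle), (VL) hold and simultaneously that $\rank A_I = r(I)$. The key observation is that if some node $\gamma$ had $\deg_I(\gamma) \geq 3$, then the three $2\times 2$ blocks sharing the $2$-dimensional space $U_\gamma$ (or $V_\gamma$) are linearly dependent in a way that forces $\rank A_I = \rank A_{I\setminus\{\alpha\beta\}}$ for one of the incident edges $\alpha\beta$ — this is a local rank argument on a $2$-row (or $2$-column) band, using that three vectors in a $2$-dimensional space are dependent. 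Once (Deg) is known, $I$ decomposes into paths and cycles, and I would analyze each component separately, since $A_I$ is block-diagonal across components (after permuting rows/columns) and both $\rank$ and $r(\cdot)$ are additive over components. For a single path or cycle component, the argument is essentially a careful bookkeeping: write down $A_C$ explicitly and compute its rank, tracking how the rank-1 versus rank-2 status of each edge and the labels propagate.

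The core computation is the single-component case. For a path component $C$ with edges $e_1, e_2, \dots, e_\ell$ in order, I would show by induction along the path that the rank deficiency of $A_C$ relative to the trivial upper bound $2\ell$ (number of edges times $2$, minus overlaps at internal degree-$2$ nodes) is controlled exactly by whether the end edges are rank-1. The mechanism: at each internal node, the two incident blocks share a $2$-dimensional space, and the ``$+/-$'' labels are precisely the directions in which each block contributes independently versus redundantly; (VL) — the two inherited labels at a degree-$2$ node being distinct — is exactly the condition that no extra rank is lost there. An isolated rank-2 edge contributes $2$, a rank-1 edge contributes $1$ to the matching-type count, and the formula $r(I) = |I| + \#\{\text{isolated rank-2 edges}\}$ falls out. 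For cycle components, (Cycle) guarantees a rank-1 $+$-edge and a rank-1 $-$-edge, which ``breaks'' the cycle at two points and reduces the analysis to paths; without both, the cyclic label propagation would be inconsistent (the labels would fail to close up) and the cycle would not be a matching.

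For the ``if'' direction, I would go the other way: assuming (Deg), (Path), (Cycle), (VL), I construct — using the valid labeling — an explicit substitution $\tilde A_I$ of $A_I$ whose rank equals $r(I)$, which simultaneously proves $\rank A_I \geq r(I)$; combined with the easy upper bound $\rank A_I \leq r(I)$ (which follows from (Deg) and the path/cycle structure by the overlap-counting above), this gives $\rank A_I = r(I)$. Then to verify the matching property~\eqref{eq:def matching}, I would check that deleting any single edge $\alpha\beta$ strictly decreases $r$ on the component containing it — deleting an edge from a path either splits it or shortens it, and in either case the new components still satisfy (Path)/(Cycle)/(VL) for the restricted labeling, so $\rank A_{I\setminus\{\alpha\beta\}} = r(I\setminus\{\alpha\beta\}) < r(I)$; here one must check that the end-edge rank-1 condition is not violated, which is where (Path) and (Cycle) are used in reverse.

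The main obstacle I anticipate is the single-component rank computation — proving $\rank A_C = r(C)$ cleanly for a path or cycle, because the propagation of the $+/-$ labels and the exact accounting of rank loss at each internal node requires choosing coordinates carefully (e.g.\ putting each rank-1 block in a normal form aligned with its kernels) and then showing that the resulting banded matrix has the claimed rank via a row/column reduction that telescopes along the path. The subtlety is that the labels at a degree-$2$ node are forced from \emph{both} sides, and one must confirm these two forced labels being distinct (condition (VL)) is both necessary and sufficient for the local rank to be maximal; handling the cycle case, where there is no ``free end'' to start the induction, will require first using (Cycle) to locate a rank-1 edge of each sign and cutting there.
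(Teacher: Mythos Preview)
Your overall plan---reduce to single components, use the valid labeling to choose good bases, and compute the rank of each component directly---is essentially what the paper does. The paper makes the ``coordinate-choosing'' step completely explicit: taking bases $\{u_\alpha^+,u_\alpha^-\}$ of $U_\alpha$ and $\{v_\beta^+,v_\beta^-\}$ of $V_\beta$ from the valid labeling, the change of basis $S A_C T$ splits $A_C$ into two blocks $A_C^+$ and $A_C^-$ (the $+$-indexed and $-$-indexed rows/columns), each of which is bidiagonal with entries $a\,x_{\alpha\beta}$ carrying pairwise distinct variables. From this form one reads off $\rank A_C = |C|$ immediately, and your anticipated ``telescoping row/column reduction'' becomes unnecessary.

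There is, however, a genuine gap in your verification of the matching property in the ``if'' direction. You propose to show $\rank A_{I\setminus\{\alpha\beta\}} = r(I\setminus\{\alpha\beta\}) < r(I)$ by arguing that $I\setminus\{\alpha\beta\}$ again satisfies (Path), (Cycle), (VL). This is false: if $C$ is a cycle with exactly one rank-$1$ $+$-edge $e$ and you delete $e$, the resulting path has a rank-$2$ end edge and violates (Path); similarly, deleting a rank-$1$ end edge of a path can expose a rank-$2$ neighbor as a new end edge. So the inductive invocation of the rank formula on $I\setminus\{\alpha\beta\}$ is not available. The paper sidesteps this entirely: once $S A_C T$ is in the bidiagonal two-block form, (Cycle) guarantees that $A_C^+$ contains a zero on one of its diagonals (coming from a rank-$1$ $+$-edge), and then deleting any $-$-edge kills a column with a unique nonzero entry in $A_C^+$, so $\rank A_C^+$ drops; symmetrically for $+$-edges and $A_C^-$. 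You should replace your inductive step by this direct argument on the transformed matrix.

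A smaller point: the ``easy upper bound'' $\rank A_C \le r(C)$ is \emph{not} a consequence of (Deg) alone. For an odd-length path component $C$ of length $2k-1\ge 3$, the matrix $A_C$ is square of size $2k\times 2k$, so dimension counting only gives $\rank A_C \le 2k = |C|+1$. You need (Path)---the rank-$1$ end edge supplies a kernel vector---to get down to $|C|$.
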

Now Edmonds' problem for $A$
is equivalent to the problem of obtaining a maximum matching for $A$,
since, for a matching $I$,
it holds that $\rank A_I = \rank A = r(I)$
by \cref{thm:chara}.
Our proposed algorithm (described in \cref{sec:algorithm,sec:augmentation}) finds a maximum matching of $A$ in $O((\mu\nu)^2 \min\{ \mu, \nu \})$ time.

\cref{thm:chara} has several consequences.
It can be immediately seen from \cref{thm:chara} that
a matching has a good characterization.
That is,
one can decide if a given edge subset $I$ is a matching in polynomial time.

By combining the rank formula in \cref{thm:chara} with Iwata--Murota's minimax formula~\cite{SIMAA/IM95} 
(or Fortin--Reutenauer's one~\cite{SLC/FR04} with $\rank A = \ncrank A$), 
we obtain the following combinatorial and algebraic minimax theorem between matchings and vector spaces,
which is used for the validity of the optimality of a matching.
\begin{cor}\label{cor:min-max}
	\begin{align*}
		\displaystyle
		\max \{ r(I) \mid I : \text{matching} \} = \min \left\{ 2\mu + 2\nu - \sum_{\alpha} \dim X_\alpha - \sum_{\beta} \dim Y_\beta \right\},
	\end{align*}
	where the minimum is taken over
	all vector spaces $X_\alpha \subseteq U_\alpha$ and $Y_\beta \subseteq V_\beta$
	such that $\A(X_\alpha, Y_\beta) = \{0\}$ for $\alpha$ and $\beta$.
\end{cor}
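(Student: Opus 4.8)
The plan is to pin both sides of the claimed identity to the common value $\rank A$. For the left side: by \cref{thm:chara}, every matching $I$ satisfies $r(I) = \rank A_I$, and $\rank A_I \le \rank A$ always holds (passing from $A$ to $A_I$ only specializes some variables $\x$ to $0$, which cannot increase the rank); since a maximum matching $I^{*}$ exists (there are finitely many edge subsets, and $\emptyset$ is a matching) and satisfies $\rank A_{I^{*}} = \rank A$ by definition, we get $\max\{r(I)\mid I:\text{matching}\}=\rank A$. So it remains to prove that the right-hand side minimum equals $\rank A$, and I would do this by establishing the two inequalities separately.

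For ``$\rank A\le\min\{\cdots\}$'' I would argue directly, without any cited theorem. Fix feasible subspaces $X_\alpha\subseteq U_\alpha$ and $Y_\beta\subseteq V_\beta$ with $\A(X_\alpha,Y_\beta)=\{0\}$ for all $\alpha$ and $\beta$, and put $X:=\bigoplus_\alpha X_\alpha\subseteq U$, $Y:=\bigoplus_\beta Y_\beta\subseteq V$. For $u=(u_\alpha)_\alpha\in X$ and $v=(v_\beta)_\beta\in Y$ one has $u^\top A v=\sum_{\alpha,\beta}\x\,(u_\alpha^\top\A v_\beta)=0$, so in bases of $U$ and $V$ extending bases of $X$ and $Y$ the matrix of the bilinear form $A$ has a zero block of size $(\sum_\alpha\dim X_\alpha)\times(\sum_\beta\dim Y_\beta)$. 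Since a $p\times q$ matrix with an $a\times b$ zero block has rank at most $(p-a)+(q-b)$, this gives $\rank A\le 2\mu+2\nu-\sum_\alpha\dim X_\alpha-\sum_\beta\dim Y_\beta$; minimizing over all feasible choices yields the inequality.

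For the reverse inequality ``$\min\{\cdots\}\le\rank A$'' I would invoke the known minimax formula. The cleanest route is Iwata--Murota's formula for the rank of a $(2\times2)$-type generic partitioned matrix~\cite{SIMAA/IM95}: its minimization runs exactly over the direct product of the subspace lattices of the $U_\alpha$ and $V_\beta$, so after a routine matching-up of the expressions it already asserts $\rank A=2\mu+2\nu-\sum_\alpha\dim X_\alpha-\sum_\beta\dim Y_\beta$ for the optimal $(X_\alpha),(Y_\beta)$. Alternatively, since $\rank A=\ncrank A$ for this class, one can start from Fortin--Reutenauer's nc-rank formula~\cite{SLC/FR04}, which supplies subspaces $\mathcal U\subseteq U$ and $\mathcal W\subseteq V$ (not a priori block-diagonal) with $u^\top A_i w=0$ for every generator $A_i$ and $2\mu+2\nu-\dim\mathcal U-\dim\mathcal W=\ncrank A$; one then observes that this isotropy condition depends only on the coordinate projections, i.e.\ it is equivalent to $\A(\pi_{U_\alpha}\mathcal U,\pi_{V_\beta}\mathcal W)=\{0\}$ for all $\alpha,\beta$, so $X_\alpha:=\pi_{U_\alpha}\mathcal U$ and $Y_\beta:=\pi_{V_\beta}\mathcal W$ are feasible, and $\mathcal U\subseteq\bigoplus_\alpha X_\alpha$, $\mathcal W\subseteq\bigoplus_\beta Y_\beta$ force $\sum_\alpha\dim X_\alpha\ge\dim\mathcal U$ and $\sum_\beta\dim Y_\beta\ge\dim\mathcal W$, whence $2\mu+2\nu-\sum_\alpha\dim X_\alpha-\sum_\beta\dim Y_\beta\le\ncrank A=\rank A$.

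Combining the two inequalities gives $\min\{\cdots\}=\rank A$, and by the first paragraph $\rank A=\max\{r(I)\}$, which is the corollary. I expect the only non-routine point to be this reverse inequality: one must either verify that the cited Iwata--Murota formula is literally of the block-diagonal shape written here, or, taking the Fortin--Reutenauer route, carry out the projection argument that turns an arbitrary optimal isotropic pair into a block-diagonal one without decreasing the dimension deficiency. The rest is bookkeeping.
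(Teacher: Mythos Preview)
Your proposal is correct and follows essentially the same route as the paper, which only offers a one-sentence sketch (``combine the rank formula in \cref{thm:chara} with Iwata--Murota's minimax formula, or Fortin--Reutenauer's with $\rank A=\ncrank A$''). You simply unpack that sketch: pin both sides to $\rank A$, prove the easy inequality $\rank A\le\min\{\cdots\}$ by the standard zero-block bound, and for the reverse inequality either cite Iwata--Murota directly or derive the block-diagonal form from Fortin--Reutenauer via projections---the latter argument (observing that the isotropy condition factors through the coordinate projections $\pi_{U_\alpha},\pi_{V_\beta}$ because $u$ and $w$ can be chosen independently) is a genuine detail the paper omits, and it is correct.
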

An {\it optimality witness} of $I$
is a tuple of vector spaces $X_\alpha$ and $Y_\beta$ for $\alpha , \beta$
satisfying $\A(X_\alpha, Y_\beta) = \{0\}$ and $r(I) = 2\mu + 2\nu - \sum_{\alpha} \dim X_\alpha - \sum_{\beta} \dim Y_\beta$.
\cref{thm:chara} and \cref{cor:min-max} verify that
$I$ is maximum if
there exists an optimality witness for $I$.

For a matching $I$,
the {\it canonical substitution} $\tilde{A}_I$ of $A_I$
is the matrix over $\F$ obtained from assigning $1$ to $\x$ for $\alpha\beta \in I$
and $0$ to $\x$ for $\alpha\beta \not\in I$.
Then the following holds; the proof is given in \cref{subsubsec:prop:substitution}.
\begin{prop}\label{prop:substitution}
	For a matching $I$,
	it holds that $\rank A_I = \rank \tilde{A}_I$.
\end{prop}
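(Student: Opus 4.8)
The plan is to use the characterization from \cref{thm:chara} and, in particular, the valid labeling, to show that the generic rank of $A_I$ is already attained by the all-ones substitution. By \cref{thm:chara}, $\rank A_I = r(I) = |I| + (\text{number of isolated rank-2 edges})$, and since $\rank \tilde A_I \le \rank A_I$ always holds (a substitution cannot increase the rank), it suffices to exhibit, in $\tilde A_I$, a nonsingular submatrix of order $r(I)$. Because $I$ decomposes into path and cycle components that interact only through shared blocks within a component, the whole problem reduces to the single-component case: it is enough to find, for each connected component $C$ of $I$, a square submatrix of $\tilde A_C$ of order $r(C)$ that is nonsingular, and then take the block-diagonal union over components.

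So the core of the argument is a component-by-component construction. First I would treat an isolated rank-2 edge $\alpha\beta$: here $r(C) = 2$ and $\tilde A_C$ restricted to $U_\alpha \times V_\beta$ is just the $2\times2$ matrix $A_{\alpha\beta}$ itself (with $x_{\alpha\beta}$ set to $1$), which is nonsingular by definition of rank-2, so we keep both rows and both columns. For a general path or cycle component $C$, I would use the valid labeling: at each node $\gamma$ we have two distinct $1$-dimensional labels. The idea is to pick, for each node, a single coordinate direction to \emph{keep} and one to \emph{delete}, guided by the labeling so that the surviving submatrix becomes (after a basis change that is harmless for rank) essentially block-triangular with the $2\times 2$ determinants of the edges — or appropriate $1\times1$ pieces at rank-1 edges — on the diagonal. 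Concretely: along a path component, the two end edges are rank-1 (by (Path)), so at each endpoint the kernel direction is forced and only one direction of $U_\alpha$ or $V_\beta$ is "used"; propagating the $+/-$ labels inward, each internal node of degree $2$ contributes both of its directions, one to its $+$-edge and one to its $-$-edge, and (VL) guarantees these are independent. Choosing the kept rows/columns to match this bipartition of directions, the resulting square submatrix of $\tilde A_C$ can be put in block-triangular form whose diagonal blocks are the (nonzero) $2\times 2$ or $1\times 1$ contributions coming from $A_{\alpha\beta}(U_\alpha^+, V_\beta^+)$-type evaluations, none of which vanishes. The cycle case is analogous, using (Cycle) to locate a rank-1 $+$-edge and a rank-1 $-$-edge, which break the cyclic dependency and let the same triangularization go through; the count works out to $r(C) = |C|$ in both non-isolated path and cycle cases.

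The main obstacle I anticipate is making the triangularization rigorous: one must choose a consistent ordering of the kept rows and columns along the path/cycle so that every "above-diagonal" block of the reordered submatrix is zero, and this requires carefully tracking which $1$-dimensional label of each node is hit by each incident edge and invoking \eqref{eq:+-} (so that a $+$-direction at $U_\alpha$ pairs only with a $-$-direction at the neighboring $V_\beta$, hence is orthogonal to that neighbor's $+$-direction) to kill the unwanted entries. A clean way to organize this is to apply, on each node $\gamma$, the change of basis sending the standard basis of $U_\gamma$ (or $V_\gamma$) to the pair $(U_\gamma^+, U_\gamma^-)$ — legitimate since these are distinct lines, hence a basis — transforming $\tilde A_I$ into a matrix whose $(\alpha,\beta)$ block is $A_{\alpha\beta}$ expressed in the $\pm$ bases; by \eqref{eq:+-} each edge block then has zeros in its $(+,+)$ and $(-,-)$... wait, rather in its $(+,-)$ and $(-,+)$ positions, leaving only the $(+,+)$ and $(-,-)$ entries, and by \eqref{eq:++ --} exactly one of those vanishes at a rank-1 edge. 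After this transformation the path/cycle structure makes the incidence pattern of nonzero entries that of a (weighted) bipartite graph with maximum degree constraints, and a direct matching argument — or an explicit determinant expansion — shows the relevant minor is nonzero. I expect the bookkeeping of signs and the endpoint base cases to be the only genuinely delicate points; the algebra itself is routine once the right bases are in place.
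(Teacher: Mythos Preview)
Your approach is correct, but it is not the route the paper takes. You propose to reuse the basis change to the $\pm$-labeling (the matrices $S_\alpha, T_\beta$ from the ``if'' direction of \cref{thm:chara}) and then argue directly that the transformed $\tilde A_C$ splits into two blocks $\tilde A_C^+, \tilde A_C^-$ whose combinatorial zero pattern---bidiagonal for paths, cyclic for cycles, with the diagonal zeros forced by (Path)/(Cycle)---makes the relevant minors nonzero even after substituting $1$. This works: for a cycle component, (Cycle) kills the identity-permutation term in $\det \tilde A_C^\pm$, leaving only the single cyclic-permutation term, which is a product of nonzero field elements; the path case is an easier triangular variant. One small correction to your sketch: for a non-isolated component you will not get a full-rank \emph{square} $\tilde A_C$ in general (e.g.\ an odd path has $\rank \tilde A_C = |C| = 2k-1$ inside a $2k\times 2k$ block), so you must really select the right rows/columns, as you say, rather than take a full determinant.

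The paper's proof is shorter and uses the other half of the proof of \cref{thm:chara}: the factor-theorem step there shows that $\det A_I[M,N]$ is divisible by every $x_{\alpha\beta}$ with $\alpha\beta\in I$, and a degree count (each entry is linear, and the total degree matches $r(I)$) forces $\det A_I[M,N]$ to be a nonzero constant times a monomial in the $x_{\alpha\beta}$. Substituting $1$ then gives that nonzero constant. So the paper never revisits the block structure; it exploits that the determinant is literally a monomial. Your approach buys explicitness---you can point to the nonsingular submatrix---at the cost of the case analysis you anticipate; the paper's approach buys brevity by leaning on the algebraic structure already established.
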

By \cref{prop:substitution},
for an arbitrary ground field $\F$,
$\rank \tilde{A}_I = \rank A$ holds for a maximum matching $I$.
Since
our algorithm
outputs a maximum matching in $O((\mu\nu)^2 \min\{ \mu, \nu \})$ time,
we obtain a maximum rank substitution in the same time.
While the {\it maximum rank completion problem},
the problem of computing the maximum rank substitution of a
symbolic matrix of the form~\eqref{eq:linear A},
is NP-hard in general if $|\F|$ is small~\cite{JCSS/BFS99},
the class of $(2 \times 2)$-type generic partitioned matrices
constitutes a new tractable class of the maximum rank completion problem for arbitrary $\F$.
\begin{thm}
	The maximum rank completion problem for a $(2 \times 2)$-type generic partitioned matrix of the form~\eqref{eq:A}
	can be solved in $O((\mu\nu)^2 \min\{ \mu, \nu \})$ time.
\end{thm}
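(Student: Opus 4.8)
The plan is to derive this statement as a direct consequence of \cref{thm:main}, \cref{thm:chara}, and \cref{prop:substitution}, with essentially no new work. First I would run the algorithm of \cref{thm:main}, which I would observe terminates not merely with the numerical value $\rank A$ but with a maximum matching $I$ of $A$ as a witness (together with an optimality witness $X_\alpha, Y_\beta$); this is how the algorithm in \cref{sec:algorithm,sec:augmentation} is set up, and it runs in $O((\mu\nu)^2 \min\{\mu,\nu\})$ time. By \cref{thm:chara}, this matching satisfies $\rank A_I = r(I) = \rank A$.

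Next I would form the canonical substitution $\tilde A_I$ of $A_I$, i.e., the matrix over $\F$ obtained by assigning $1$ to $\x$ for $\alpha\beta \in I$ and $0$ to $\x$ for $\alpha\beta \notin I$. This construction costs only $O(\mu\nu)$ time. By \cref{prop:substitution}, $\rank \tilde A_I = \rank A_I$, and combining with the previous paragraph, $\rank \tilde A_I = \rank A$. Since $\tilde A_I$ is a substitution of $A$ (a completion) and no substitution can exceed $\rank A$, the matrix $\tilde A_I$ is a maximum rank completion of $A$; moreover the argument is valid over an arbitrary field $\F$, in contrast with the NP-hardness of the general maximum rank completion problem~\cite{JCSS/BFS99}.

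For the running time I would simply note that the total cost is dominated by the single call to the algorithm of \cref{thm:main}: the post-processing (building $\tilde A_I$, and optionally certifying its rank against the optimality witness) is absorbed into the $O((\mu\nu)^2 \min\{\mu,\nu\})$ bound. There is no genuine obstacle in this proof; the only point that needs to be stated carefully is that the algorithm of \cref{thm:main} indeed outputs a maximum matching (not just $\rank A$), which is guaranteed by its design and by the discussion following \cref{thm:chara}.
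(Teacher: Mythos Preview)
Your proposal is correct and matches the paper's own argument essentially verbatim: the paper also derives this theorem directly from \cref{thm:main}, \cref{thm:chara}, and \cref{prop:substitution} by outputting the canonical substitution $\tilde A_I$ of a maximum matching $I$. There is nothing to add.
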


From the rank formula
in \cref{thm:chara},
we obtain an explicit expression of
the left/right kernel of the canonical substitution $\tilde{A}_I$.
For a matching $I$,
define
\begin{align}\label{eq:kera}
	\ker_I(\alpha) :=
	\begin{cases}
		U_\alpha & \text{if $\deg_I(\alpha) = 0$},\\
		\kerL(\A) & \text{if $\alpha$ is incident only to one rank-1 edge $\alpha \beta$ in $I$},\\
		\{ 0 \} & \text{otherwise}.
	\end{cases}
\end{align}
Also let $\ker_I(\beta)$ be the vector subspace of $V_\beta$ 
such that ``L'', $\alpha$, and $U$ in~\eqref{eq:kera} are replaced with ``R'', $\beta$, and $V$,
respectively.
\begin{cor}\label{cor:ker}
	If $I$ is a matching,
	then $\kerL(\tilde{A}_I) = \bigoplus_\alpha \ker_I(\alpha)$ and
	$\kerR(\tilde{A}_I) = \bigoplus_\beta \ker_I(\beta)$.
\end{cor}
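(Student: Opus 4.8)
The plan is to deduce the kernel description from the rank formula $\rank \tilde A_I = r(I)$ (which follows from \cref{thm:chara} combined with \cref{prop:substitution}) together with a direct verification that the claimed subspaces actually lie in the kernel. Since $\tilde A_I$ is block-structured along the decompositions $U = \bigoplus_\alpha U_\alpha$ and $V = \bigoplus_\beta V_\beta$, and since $\bigoplus_\alpha \ker_I(\alpha)$ and $\bigoplus_\beta \ker_I(\beta)$ respect these decompositions, it suffices to prove the two containments ``$\supseteq$'' and then match dimensions.

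First I would show $\bigoplus_\alpha \ker_I(\alpha) \subseteq \kerL(\tilde A_I)$. Take $\alpha$ and a vector $u \in \ker_I(\alpha)$; we must check $u^\top (\tilde A_I)_{\alpha\beta'} = 0$ for every $\beta'$, i.e. $\tilde A_{\alpha\beta'}(u, \cdot) = 0$ for all edges $\alpha\beta' \in I$ (other blocks of row-group $\alpha$ are zero in $\tilde A_I$). If $\deg_I(\alpha)=0$ there is nothing to check. If $\alpha$ is incident to a single rank-1 edge $\alpha\beta$, then $u \in \kerL(A_{\alpha\beta})$ and $\tilde A_{\alpha\beta}(u,\cdot)=0$ by definition of the left kernel (and the substitution multiplies the block by $1$). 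In the remaining case $\ker_I(\alpha)=\{0\}$ and the containment is trivial. The argument for $\bigoplus_\beta \ker_I(\beta) \subseteq \kerR(\tilde A_I)$ is symmetric.

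It remains to match dimensions: I would verify $\dim \bigoplus_\alpha \ker_I(\alpha) = 2\mu - r(I)$ and $\dim \bigoplus_\beta \ker_I(\beta) = 2\nu - r(I)$, since $\rank \tilde A_I = r(I)$ forces $\dim \kerL(\tilde A_I) = 2\mu - r(I)$ and $\dim \kerR(\tilde A_I) = 2\nu - r(I)$, and a subspace of the kernel of the correct dimension equals it. For the left side, $\dim \ker_I(\alpha)$ is $2$ if $\deg_I(\alpha)=0$, is $1$ if $\alpha$ meets exactly one edge in $I$ and that edge is rank-1, and is $0$ otherwise (degree $\ge 2$, or degree $1$ with a rank-2 edge). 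So $\sum_\alpha \dim\ker_I(\alpha) = 2\cdot(\text{number of $\alpha$ with } \deg_I(\alpha)=0) + (\text{number of $\alpha$ that are a rank-1 endpoint of a path component of }I)$; one then checks this equals $2\mu - r(I) = 2\mu - |I| - (\text{number of isolated rank-2 edges})$ by a counting argument over the components of $I$ (isolated edges, longer paths, cycles), using \cref{thm:chara}'s structure — in particular that non-isolated path components have rank-1 end edges (Path) so each such path contributes exactly its two degree-1 endpoints to the count.

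The main obstacle is the bookkeeping in this last counting step: one must carefully account, component by component, for how $|I|$, the number of degree-$0$ nodes, the number of rank-1 path endpoints, and the number of isolated rank-2 edges combine. This is routine but requires splitting into the cases of an isolated rank-1 edge, an isolated rank-2 edge, a path with $\ge 2$ edges, and a cycle, and invoking (Path) and (Cycle) to know that the relevant end/cycle edges have the right rank so that the endpoint vertices genuinely contribute to $\bigoplus_\alpha \ker_I(\alpha)$. Once the component-wise tally is assembled, summing over all components and over isolated nodes gives the desired identity, completing the proof.
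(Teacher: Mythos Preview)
Your approach is essentially the same as the paper's: verify the obvious inclusion $\bigoplus_\alpha \ker_I(\alpha)\subseteq\kerL(\tilde A_I)$, then match dimensions via $\rank\tilde A_I=r(I)$ from \cref{thm:chara} and \cref{prop:substitution}. The only difference is that the paper does the dimension count in one line, $\sum_\alpha\dim\ker_I(\alpha)=2\mu-2\kappa-(|I|-\kappa)=2\mu-r(I)$ with $\kappa$ the number of isolated rank-$2$ edges (implicitly using (Path) to ensure that every $\alpha$ of degree~$1$ incident to a rank-$2$ edge comes from an isolated rank-$2$ edge), rather than the component-by-component case split you outline.
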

\begin{proof}
	We only show $\kerL(\tilde{A}_I) = \bigoplus_\alpha \ker_I(\alpha)$.
	It is clear that $\kerL(\tilde{A}_I) \supseteq \bigoplus_{\alpha} \ker_I(\alpha)$.
	Hence it suffices to prove $\dim(\kerL(\tilde{A}_I)) = \sum_{\alpha}\dim(\ker_I(\alpha))$.
	Let $\kappa$ be the number of isolated rank-2 edges in $I$.
	By the definition of $\ker_I(\alpha)$,
	we have $\sum_{\alpha}\dim(\ker_I(\alpha)) = 2\mu - 2\kappa - (|I| - \kappa) = 2\mu - r(I)$.
	On the other hand,
	it holds that $\dim(\kerL(\tilde{A}_I)) = 2\mu - \rank \tilde{A}_I = 2\mu - r(I)$ by \cref{thm:chara} and \cref{prop:substitution}.
	Thus we have $\dim(\kerL(\tilde{A}_I)) = \sum_{\alpha}\dim(\ker_I(\alpha))$.
\end{proof}

\subsection{Proofs}\label{subsec:proof}
For a row subset $M$ and column subset $N$ of $A$,
we denote by $A[M, N]$ the submatrix of $A$ with row set $M$ and column set $N$.
Let $\mathcal{C}$ be the set of non-isolated connected components.
Note that any component in $\mathcal{C}$ is rank-1.
For $C \in \mathcal{C}$,
we denote by $\CL$ (resp. $\CR$) the set of $\alpha$ (resp. $\beta$) belonging to $C$.
In this proof,
$A_C$ is also regarded as $A_C[L, R]$ for simplicity,
where $L$ (resp. $R$) is the row subset (resp. column subset)
corresponding to $\CL$ (resp. $\CR$).
That is, $A_C$ is a $2|\CL| \times 2|\CR|$ matrix obtained by the deletion of all zero rows (resp. columns) corresponding to $\alpha \not\in \CL$ (resp. $\beta \not\in \CR$)
from $A$.

\subsubsection{Proof of \cref{thm:chara}}\label{subsubsec:thm:chara}
(If part).
Suppose that $I$ satisfies (Deg), (Path), (Cycle), and (VL).
We show $\rank A_I > \rank A_{I \setminus \{\alpha \beta\}}$ for every $\alpha \beta \in I$.
By (Deg),
each connected component of $I$ forms a path or a cycle.
It is clear that
\begin{align}\label{eq:rank}
\rank A_I = \sum \{ \rank \A \mid \alpha \beta \in I : \text{$\alpha \beta$ is isolated} \} + \sum_{C \in \mathcal{C}} \rank A_C.
\end{align}
If $\alpha \beta$ is isolated in $I$,
then $\rank A_{I \setminus \{\alpha \beta\}} = \rank A_I - \rank \A < \rank A_I$.
Thus,
in the following,
we prove $\rank A_C > \rank A_{C \setminus \{\alpha \beta\}}$ for each $C \in \mathcal{C}$ and $\alpha \beta \in C$.
We only consider the case where $C$ is a cycle component in $I$;
the argument for a path component in $I$ is similar.
Suppose that $C$ consists of $+$-edges $\alpha_1 \beta_1, \alpha_2 \beta_2, \dots, \alpha_k \beta_k$
and $-$-edges $\beta_1 \alpha_2, \beta_2 \alpha_3, \dots, \beta_k \alpha_1$.
Choose a valid labeling $U_\alpha^+, U_\alpha^-, V_\beta^+, V_\beta^-$ for $I$.
Take nonzero vectors $u_\alpha^+ \in U_\alpha^+$, $u_\alpha^- \in U_\alpha^-$, $v_\beta^+ \in V_\beta^+$, and $v_\beta^- \in V_\beta^-$ for each $\alpha$ and $\beta$.
By $U_\alpha^+ \neq U_\alpha^-$ and $V_\beta^+ \neq V_\beta^-$,
the $2 \times 2$ matrices
$S_\alpha :=
\left[
\begin{array}{c}
u_\alpha^+\\
u_\alpha^-
\end{array}
\right]$
and
$T_\beta :=
\begin{bmatrix}
v_\beta^+ &
v_\beta^-
\end{bmatrix}$
are both nonsingular.
By the conditions~\eqref{eq:+-} and~\eqref{eq:++ --},
it holds that
\begin{numcases}
{S_\alpha \A T_\beta =}
\kbordermatrix{
	& v_\beta^+ & v_\beta^- \\
	u_\alpha^+ & \bullet & 0 \\
	u_\alpha^- & 0 & \bullet
}
& if $\alpha\beta$ is rank-2,\label{eq:rank 2}\\
\kbordermatrix{
	& v_\beta^+ & v_\beta^- \\
	u_\alpha^+ & 0 & 0 \\
	u_\alpha^- & 0 & \bullet
}
& if $\alpha\beta$ is a rank-1 $+$-edge\label{eq:rank 1 +},\\
\kbordermatrix{
	& v_\beta^+ & v_\beta^- \\
	u_\alpha^+ & \bullet & 0 \\
	u_\alpha^- & 0 & 0
}
& if $\alpha\beta$ is a rank-1 $-$-edge\label{eq:rank 1 -}
\end{numcases}
for each $\alpha\beta \in C$,
where $\bullet$ represents some nonzero element in $\F$.
Let $S$ and $T$ be the block-diagonal matrices with diagonal blocks $S_{\alpha_1}, S_{\alpha_2}, \dots, S_{\alpha_k}$
and $T_{\beta_1}, T_{\beta_2}, \dots, T_{\beta_k}$,
respectively.
By~\eqref{eq:rank 2}--\eqref{eq:rank 1 -},
we obtain
\begin{align}\label{eq:EAF}
SA_CT =
\begin{blockarray}{ccccccccccc}
& v_{\beta_1}^+ & v_{\beta_2}^+ & \cdots & v_{\beta_{k-1}}^+ & v_{\beta_k}^+  & v_{\beta_1}^- & v_{\beta_2}^- & \cdots & v_{\beta_{k-1}}^- & v_{\beta_k}^- \\
\begin{block}{c[ccccc|ccccc]}
u_{\alpha_1}^+ & \ast & & & & \bullet & & & & & \\
u_{\alpha_2}^+ & \bullet & \ast & & & & & & & &\\
\vdots &  & \ddots & \ddots & & & & & & &\\
u_{\alpha_{k-1}}^+ & & & \bullet & \ast & & & & & &\\
u_{\alpha_k}^+ & & & & \bullet & \ast & & & & &\\
\cline{2-11}
u_{\alpha_1}^- & & & & & & \bullet & & & & \ast\\
u_{\alpha_2}^- & & & & & & \ast & \bullet & & &\\
\vdots & & & & & & & \ddots & \ddots & &\\
u_{\alpha_{k-1}}^- & & & & & & & & \ast & \bullet & \\
u_{\alpha_k}^- & & & & & & & & & \ast & \bullet \\
\end{block}
\end{blockarray},
\end{align}
where $\bullet$ represents some nonzero element in $\F(x)$
and $\ast$ can be a zero/nonzero element;
$\ast$ is nonzero if and only if the corresponding edge is rank-2.
Let $A_C^+$ and $A_C^-$ denote the submatrices of $SA_CT$
with the $+$ indices and $-$ indices,
respectively.
Then it holds that $\rank A_C = \rank SA_CT = \rank A_C^+ + \rank A_C^-$.

If the $u_{\alpha_i}^+ v_{\beta_j}^+$-th entry of $A_C^+$ is nonzero,
then it is of the form $a x_{\alpha_i \beta_j}$ with some nonzero $a \in \F$.
This implies that all nonzero entries in $A_C^+$ have different variables.
Since all entries represented by bullets in~\eqref{eq:EAF} are nonzero,
it holds $\rank A_C^+ = k$.
By a similar argument,
it also holds $\rank A_C^- = k$.
Thus we obtain $\rank A_C = 2k = |C|$.

On the one hand,
by (Cycle),
one of the $+$-edges
is rank-1,
which implies that one of the asterisks in $A_C^+$ is zero.
Hence
we have $\rank A_C^+ > \rank A_{C \setminus \{\alpha \beta\}}^+$ for any $-$-edge $\alpha \beta$ in $C$.
Similarly $\rank A_C^- > \rank A_{C \setminus \{\alpha \beta\}}^-$ holds for any $+$-edge $\alpha \beta$ in $C$.
Thus we obtain $\rank A_C > \rank A_{C \setminus \{\alpha\beta\}}$ for all $\alpha\beta \in C$.

(Only-if part and the rank formula).
Suppose that $I$ is a matching.
We first show that $I$ satisfies (Deg), (Path), and (Cycle) by contraposition.

(Deg).
We only consider the case of $\deg_I(\alpha) \geq 3$
for some $\alpha$;
the case of $\deg_I(\beta) \geq 3$ is similar.
Let $\beta_1, \beta_2, \beta_3$ be distinct nodes with $\alpha \beta_1, \alpha \beta_2, \alpha \beta_3 \in I$.
There are a row subset $M$ and a column subset $N$ such that $|M| = |N| = \rank A_I$ and $\det A_I[M,N] \neq 0$.
Take any monomial of the expansion of $\det A_I[M,N]$.
Since the row size of $\A$ is 2,
at least one of the variables $x_{\alpha \beta_1}$, $x_{\alpha \beta_2}$, and $x_{\alpha \beta_3}$, say $x_{\alpha \beta_1}$, does not appear in the monomial.
Hence it holds that $\det A_{I \setminus \{\alpha \beta_1\}}[M,N] \neq 0$.
This means $\rank A_{I \setminus \{\alpha \beta_1\}} = \rank A_I$.
Thus $I$ is not a matching.

(Path).
Let $C \subseteq I$ be a non-isolated path component.
If an end edge $e$ of $C$ is rank-2,
then $\rank A_C = \rank A_{C \setminus \{\alpha \beta\}}$,
where $\alpha \beta$ is the edge incident to $e$ in $C$.
Indeed, $A_{C \setminus \{\alpha \beta\}}$ can be obtained from $A_C$ via elementary row or column operations using $A_e$.
This implies that $I$ is not a matching.

(Cycle).
Let $C \subseteq I$ be a cycle component.
Suppose that all $+$-edges in $C$ are rank-2.
Let $C^-$ be the subset of $C$ consisting of $-$-edges.
Then it holds that $\rank A_C = \rank A_{C \setminus C^-}$.
Indeed, $\rank A_{C \setminus C^-} \leq \rank A_C$ immediately follows.
On the other hand,
we have $\rank A_C \leq |C| = \rank A_{C \setminus C^-}$
since $A_C$ is a $|C| \times |C|$ matrix and $A_{C \setminus C^-}$ is a block-diagonal matrix with rank-2 diagonal blocks $\A$.
This implies that $I$ is not a matching.

(The rank formula: $\rank A_I = r(I)$).
Recall that $\CL$ (resp. $\CR$) is the set of $\alpha$ (resp. $\beta$) belonging to $C$.
It suffices to show $\rank A_C = |C|$ for each $C \in \mathcal{C}$ by~\eqref{eq:rank}.
If $C$ is a cycle component or a path component with even length,
the inequality $\rank A_C \leq  \min \{ 2|\CL|, 2|\CR| \} = |C|$ immediately follows.
For a path component $C$ with odd length,
the square matrix $A_C$ has nontrivial kernel by (Path).
Hence $\rank A_C < 2|\CL| (= 2|\CR|)$.
Thus, by $|C| = 2|\CL| - 1$, we obtain $\rank A_C \leq |C|$.
On the other hand,
let $M$ and $N$ be index sets such that
$|M| = |N| = \rank A_C$ and $\rank A_C[M, N] = \rank A_C$.
Since $I$ is a matching,
$C$ is also a matching.
Hence,
if we substitute $0$ for $x_{\alpha \beta}$ in $\det A_C[M, N]$,
then it becomes $0$.
By the factor theorem,
the polynomial $\det A_C[M, N]$ can be divided by $x_{\alpha \beta}$ for all $\alpha \beta \in C$,
that is,
$\det A_C[M, N] = c \cdot \prod_{\alpha \beta \in C} x_{\alpha \beta}$ for some nonzero polynomial $c \in \F[x]$.
This implies $\rank A_C \geq |C|$.

(VL).
We can assume that $I$ satisfies (Deg), (Path), and (Cycle).
By the argument for the rank formula,
it suffices to show that, if $I$ violates (VL),
then $\rank A_C < |C|$ for some $C \in \mathcal{C}$.
In the proof for (VL),
a vector space $Z$ over $\F$ is also regarded as one over $\F(x)$
by the scalar extension $\F(x) \otimes Z$.
Note that $\A(X, Y) = \{0\}$ if and only if $\A \x (X, Y) = \{0\}$,
in which the former $X,Y$ are vector spaces over $\F$
and the latter $X,Y$ are over $\F(x)$.

For each $C \in \mathcal{C}$ and nodes $\alpha, \beta$ in $C$,
let $U_\alpha^+, U_\alpha^- \subseteq U_\alpha$
and $V_\beta^+, V_\beta^- \subseteq V_\beta$ be 1-dimensional vector spaces
satisfying~\eqref{eq:+-} and~\eqref{eq:++ --}.
Such spaces for $\gamma$ with $\deg_I(\gamma) = 2$
are uniquely determined.
For a node $\alpha$ with $\deg_I(\alpha) = 1$ that is incident to a rank-1 $+$-edge $\alpha \beta$,
$U_\alpha^+$ is uniquely determined as $\kerL(\A)$ by~\eqref{eq:+-};
set $U_\alpha^- \neq U_\alpha^+$.
Similar arguments hold for the other cases.

Suppose that $I$ violates (VL).
Then there is a vertex $\gamma$ with $\deg_I(\gamma) = 2$
such that its $+$-space and $-$-space coincide;
we call such $\gamma$ {\it degenerate}.
Let $C \in \mathcal{C}$ be a connected component
containing a degenerate node.
In the following,
we construct vector spaces $X \subseteq \bigoplus_{\alpha \in \CL} U_\alpha$ and $Y \subseteq \bigoplus_{\beta \in \CR} V_\beta$
such that $A_C(X, Y) = \{0\}$ and $(2|\CL| + 2|\CR|) - (\dim X + \dim Y) < |C|$,
implying $\rank A_C \leq (2|\CL| + 2|\CR|) - (\dim X + \dim Y) < |C|$.

We first consider the case where $C$ is a path component with odd length;
the case where $C$ is a path component with even length is similar.
Suppose $C = (\alpha_1 \beta_1, \beta_1 \alpha_2, \dots, \alpha_k \beta_k)$.
Without loss of generality,
assume that $\alpha_1 \beta_1$ and $\alpha_k \beta_k$ are $+$-edges
and $\alpha_p$ with $1 < p \leq k$
is degenerate.
Let $X_{\alpha_1} := U_{\alpha_1}$,
$X_{\alpha_i} := U_{\alpha_i}^-$ for $1 < i \leq p$,
and $X_{\alpha_j} := U_{\alpha_j}^+$ for $p < j \leq k$.
Similarly,
let $Y_{\beta_i} := V_{\beta_i}^+$ for $1 \leq i < p$,
$Y_{\beta_j} := V_{\beta_j}^-$ for $p \leq j < k$,
and $Y_{\beta_k} := V_{\beta_k}$.
Namely,
the end vertices are associated with 2-dimensional vector spaces,
and the other vertices have 1-dimensional vector spaces.
Define $X := \bigoplus_{\alpha \in \CL} X_\alpha$ and $Y := \bigoplus_{\beta \in \CR} Y_\beta$.
Then one can see $A_C(X, Y) = \{0\}$ and $(2|\CL| + 2|\CR|) - (\dim X + \dim Y) < |C|$.
Indeed, the latter is clear.
The former follows from
the conditions~\eqref{eq:+-} and~\eqref{eq:++ --} of $U_\alpha^+, U_\alpha^-, V_\beta^+, V_\beta^-$ and $U_{\alpha_p}^+ = U_{\alpha_p}^-$.

We next consider the case where $C$
is a cycle component.
By (Cycle),
there are a rank-1 $+$-edge $\alpha_1 \beta_1$,
and a rank-1 $-$-edge $\beta_k \alpha_{k+1}$.
Suppose that
$C$ is a cycle of $\alpha_1 \beta_1$, a $\beta_1$-$\beta_k$ path,
$\beta_k \alpha_{k+1}$,
and an $\alpha_{k+1}$-$\alpha_1$ path.
We may
assume that a degenerate node $\alpha_p$ belongs to the $\beta_1$-$\beta_k$ path $(\beta_1 \alpha_2, \alpha_2 \beta_2, \dots, \alpha_k \beta_k)$.
Let
$X_{\alpha_i} := U_{\alpha_i}^-$ for $1 < i \leq p$,
$X_{\alpha_j} := U_{\alpha_j}^+$ for $p < j \leq k$,
and $X_{\alpha} := U_\alpha$ for other $\alpha$.
Similarly,
let $Y_{\beta_i} := V_{\beta_i}^+$ for $1 \leq i < p$,
$Y_{\beta_j} := V_{\beta_j}^-$ for $p \leq j \leq k$,
and $Y_{\beta} := \{0\}$ for other $\beta$.
Namely,
the vertices belonging to the $\beta_1$-$\beta_k$ path are associated with 1-dimensional vector spaces,
and the other vertices $\alpha$ (resp. $\beta$) have 2-dimensional (resp. 0-dimensional) vector spaces.
Then one can see $A_C(X, Y) = \{0\}$ and $(2|\CL| + 2|\CR|) - (\dim X + \dim Y) < |C|$
by a similar argument as for the above path case.

\subsubsection{Proof of \cref{prop:substitution}}\label{subsubsec:prop:substitution}
Let $I$ be a matching.
Also let $M$ and $N$ be a row subset and a column subset of $A_I$,
respectively,
with
$|M| = |N| = \rank A_I$ and $\rank A_I[M, N] = \rank A_I$.
We denote by $\mathcal{C}'$ the set of rank-1 connected components.
By the proof of \cref{thm:chara} on the rank formula with $\rank A_C = |C|$ for $C \in \mathcal{C}'$,
we have
\begin{align*}
\det A_I[M, N] = a \cdot \prod \{\x^2 \mid \text{$\alpha \beta$ is an isolated rank-2 edge in $I$} \} \cdot \prod \{x_{\alpha \beta} \mid \alpha \beta \in C : C \in \mathcal{C}' \}
\end{align*}
for some nonzero $a \in \F$.
By assigning $1$ to all $\x$ with $\alpha \beta \in I$,
we obtain $\det \tilde{A}_I[M, N] = a \neq 0$,
implying $\rank \tilde{A}_I = \rank A_I$.

\section{Augmenting space-walk}\label{sec:algorithm}
Our proposed algorithm
is an augmenting-path type algorithm,
where we use the concept of {\it augmenting space-walk}, introduced in this section.
An outline of our algorithm is as follows.
Let $I$ be a matching of $A$.
We first search an optimality witness for $I$ or an augmenting space-walk for $I$.
In the former case,
we have $\rank A = \rank A_I$,
and hence output $r(I)$ by \cref{thm:chara}.
In the latter case,
we update a matching $I$ to another matching $I^*$ with $\rank A_{I^*} > \rank A_I$
via the augmenting space-walk.

For a vector space $X \subseteq U_\alpha$,
let $X^{\perp_{\alpha \beta}}$ (or $X^{\perp_{\beta \alpha}}$) denote
the orthogonal vector space with respect to $\A$:
\begin{align*}
X^{\perp_{\alpha \beta}} (= X^{\perp_{\beta \alpha}}) &:= \{ y \in V_\beta \mid \A(x, y) = 0 \text{ for all } x \in X \}.
\end{align*}
For a vector space $Y \subseteq V_\beta$,
$Y^{\perp_{\alpha \beta}}$ (or $Y^{\perp_{\beta \alpha}}$) is defined analogously.

\subsection{Definition}\label{subsec:augmenting space-walk}
Our augmenting path concept, named {\it augmenting space-walk},
simulates a part of the Wong sequence
needed for the augmentation;
see \cref{subsec:Wong sequence} for details.
We first define the components of augmenting space-walks.
Let $I$ be a matching of $A$.
An {\it outer walk} for $I$ is a walk $(\beta_1 \alpha_1, \alpha_1 \beta_2, \dots, \beta_k \alpha_k)$ in $G$
such that $\beta_i \alpha_i \in E \setminus I$ and $\alpha_i \beta_{i+1}$ is an isolated rank-2 edge in $I$ for each $i$.
An {\it outer space-walk} for $I$
is a sequence
$\calP = (Y_1, \beta_1 \alpha_1, X_1, \alpha_1 \beta_2, \dots, \beta_k \alpha_k, X_k)$
such that
\begin{itemize}
	\item
	$(\beta_1 \alpha_1, \alpha_1 \beta_2, \dots, \beta_k \alpha_k)$ is an outer walk for $I$,
	and
	\item
	for each $i$,
	vector subspaces
	$Y_i \subseteq V_{\beta_i}$ and $X_i \subseteq U_{\alpha_i}$
	satisfy
	$Y_i \not\subseteq \kerR(A_{\alpha_i \beta_i})$,
	$X_i = {Y_i}^{\perp_{\alpha_i \beta_i}}$,
	and $Y_{i+1} = {X_i}^{\perp_{\alpha_i \beta_{i+1}}}$.
\end{itemize}
Note $\dim X_i \leq 1$ and $\dim Y_i \geq 1$ for each $i$.
The initial vertex $\beta_1$ and last vertex $\alpha_k$ of $\calP$
are denoted by $\beta(\calP)$ and $\alpha(\calP)$,
respectively.
Also
the initial space $Y_1$ and last space $X_k$ of $\calP$
are denoted by $Y(\calP)$ and $X(\calP)$,
respectively.

An {\it inner walk} for $I$ is a walk $(\alpha_1 \beta_1, \beta_1 \alpha_2, \dots, \alpha_k \beta_k)$ in $G$
such that it belongs to a rank-1 connected component of $I$ and $\beta_i \alpha_{i+1}$ is rank-2
for each $i$.
We here fix signs $+/-$ of edges and a valid labeling.
An {\it inner space-walk} for $I$
is a sequence
$\calQ = (X_1, \alpha_1 \beta_1, Y_1, \beta_1 \alpha_2, \dots, \alpha_k \beta_k, Y_k)$
such that
\begin{itemize}
	\item
	$(\alpha_1 \beta_1, \beta_1 \alpha_2, \dots, \alpha_k \beta_k)$
	is an inner walk for $I$,
	and
	\item
	for each $i$,
	$(X_i, Y_i)$ is equal to
	$(U_{\alpha_i}^+, V_{\beta_i}^-)$ if $\alpha_i \beta_i$ is a $+$-edge,
	and is equal to $(U_{\alpha_i}^-, V_{\beta_i}^+)$ if $\alpha_i \beta_i$ is a $-$-edge.
\end{itemize}
The initial vertex $\alpha_1$ and last vertex $\beta_k$ of $\calQ$
are denoted by
$\alpha(\calQ)$ and $\beta(\calQ)$,
respectively.
Also
the initial space $X_1$ and last space $Y_k$ of $\calQ$
are denoted by $X(\calQ)$ and $Y(\calQ)$,
respectively.

We denote by $(a_1, a_2, \dots, a_k) \circ (b_1, b_2, \dots, b_\ell)$
the concatenation of sequences $(a_1, a_2, \dots, a_k)$ and $(b_1, b_2, \dots, b_\ell)$,
i.e.,
\begin{align*}
(a_1, a_2, \dots, a_k) \circ (b_1, b_2, \dots, b_\ell)
:=
\begin{cases}
(a_1, \dots, a_k, b_1, \dots, b_\ell) & \text{if $a_k \neq b_1$},\\
(a_1, \dots, a_k, b_2, \dots, b_\ell) & \text{if $a_k = b_1$}.
\end{cases}
\end{align*}

We consider an alternating concatenation
\begin{align*}
\calT := \calP_0 \circ \calQ_1 \circ \calP_1 \circ \cdots \circ \calQ_m \circ \calP_m
\end{align*}
of outer space-walks $\calP_0, \calP_1, \dots, \calP_m$
and inner space-walks $\calQ_1, \dots, \calQ_m$,
in which the following connection condition, called {\it compatibility},
is required for each $i$:
\begin{itemize}
	\item
	$\beta(\calQ_i) = \beta(\calP_i)$ and $Y(\calQ_i) = Y(\calP_i)$.
	\item
	$\alpha(\calP_i) = \alpha(\calQ_{i+1})$,
	and
	$X(\calP_i) \neq U_{\alpha(\calQ_{i+1})}^-$ if $X(\calQ_{i+1}) = U_{\alpha(\calQ_{i+1})}^+$,
	and $X(\calP_i) \neq U_{\alpha(\calQ_{i+1})}^+$ if $X(\calQ_{i+1}) = U_{\alpha(\calQ_{i+1})}^-$.
\end{itemize}

An {\it augmenting space-walk} $\calT$
is a compatible concatenation $\calP_0 \circ \calQ_1 \circ \calP_1 \circ \cdots \circ \calQ_m \circ \calP_m$ for $I$ such that
\begin{description}
	\item[($\Ai$)]
	$Y(\calP_0) = \ker_I(\beta(\calP_0)) \neq \{0\}$ and
	\item[($\Al$)]
	$X(\calP_m) \neq \ker_I(\alpha(\calP_m)) \neq \{0\}$.
\end{description}
Note that
the union of the underlying walks of $\calP_0, \calQ_1, \calP_1, \dots, \calQ_m, \calP_m$
also forms a walk in $G$.
By $X(\calP_m) \leq 1$,
($\Al$) is equivalent to $X(\calP_m) \not\supseteq \ker_I(\alpha(\calP_m)) \neq \{0\}$.

An augmenting space-walk is said to be {\it irredundant}
if every vertex appears at most twice in $\calT$ and,
in the case where a vertex $\alpha$ (resp. $\beta$) appears twice in $\calT$ as $(\dots, \beta \alpha, X, \dots, \beta' \alpha, X' \dots)$ (resp. $(\dots, \alpha \beta, Y, \dots, \alpha' \beta, Y' \dots)$),
it holds $X \not\subseteq X'$ (resp. $Y \not\supseteq Y'$).
In the following,
we assume that an augmenting space-walk is always irredundant.
Indeed, our algorithm in \cref{subsec:augmenting space-walk} outputs an irredundant augmenting space-walk.
Also, any redundant augmenting space-walk can be converted to an
irredundant one in $O(|E|)$ time by
shortening procedures in \cref{sec:augmentation};
in fact, the irredundancy is maintained during the entire augmentation procedure.

An augmenting space-walk actually augments a matching.
The following provides the validity of our augmenting procedure.
\begin{thm}\label{thm:augmentation}
	For a matching $I$ and an augmenting space-walk for $I$,
	we can obtain a matching $I^*$ with $\rank A_{I^*} > \rank A_I$
	in $O(|E|^2)$ time.
\end{thm}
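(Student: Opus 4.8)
The plan is to show that an augmenting space-walk $\calT = \calP_0 \circ \calQ_1 \circ \calP_1 \circ \cdots \circ \calQ_m \circ \calP_m$ prescribes an explicit modification of the edge set $I$ along the underlying walk in $G$, and that the resulting edge set $I^*$ is again a matching with $r(I^*) = r(I)+1$; the rank increase then follows from the rank formula in \cref{thm:chara}. First I would describe the edge operation: along the outer pieces $\calP_i$, each isolated rank-2 edge $\alpha\beta$ of $I$ that is traversed gets ``split'' into the two non-$I$ edges $\beta_i\alpha_i$, $\alpha_i\beta_{i+1}$ flanking it (this is where the spaces $X_i, Y_i$ of dimension $\le 1$ come in: they record which $1$-dimensional labels the new path-edges must kill); along the inner pieces $\calQ_i$, which live inside a rank-1 component of $I$, we recolor/reroute by deleting the traversed rank-2 edges of the component and keeping the rank-1 end edges, so that the component is cut at the right places. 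The net effect at the two ends is governed by ($\Ai$) and ($\Al$): at $\beta(\calP_0)$ the condition $Y(\calP_0) = \ker_I(\beta(\calP_0)) \ne \{0\}$ means $\beta(\calP_0)$ was either unmatched or an endpoint of a rank-1 edge with free right-kernel direction, and the walk ``uses up'' that free direction by attaching a new edge there; symmetrically at $\alpha(\calP_m)$.

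Second, I would verify that $I^*$ satisfies (Deg), (Path), (Cycle), (VL) of \cref{thm:chara}. Irredundancy of $\calT$ is exactly what guarantees (Deg): a vertex appearing at most twice, with the stated incomparability of the two associated subspaces, ensures $\deg_{I^*}(\gamma)\le 2$ and that the two forced labels at a degree-2 vertex remain distinct. The compatibility conditions at the joins $\calQ_i \mid \calP_i$ and $\calP_i \mid \calQ_{i+1}$ are precisely designed so that the labels propagated from the two sides of a join agree (for the $\beta$-join, $Y(\calQ_i) = Y(\calP_i)$) or stay distinct (for the $\alpha$-join, the $X(\calP_i) \ne U^{\pm}$ condition), which is what (VL) needs; the outer-space-walk requirement $Y_i \not\subseteq \kerR(A_{\alpha_i\beta_i})$ keeps the new rank-1 edges genuinely rank-1 with the correct kernels, giving (Path) and (Cycle). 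Here I would also record the valid labeling for $I^*$ explicitly from the $X_i, Y_i$ data and the valid labeling of $I$ away from $\calT$, since \cref{thm:chara} requires exhibiting one.

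Third, for the rank count I would check $r(I^*) = r(I) + 1$. Each traversed isolated rank-2 edge contributes $+1$ to $|I|$ when split into two edges but loses its ``$+1$ isolated-rank-2'' bonus, so it is rank-neutral; each traversed rank-1 component has its traversed rank-2 edges removed and this is compensated by the rerouting so the count within the component is unchanged; the only net change comes from the two endpoints, where ($\Ai$) and ($\Al$) add exactly one new incident edge that was not compensated elsewhere. Assembling, $|I^*| + (\text{isolated rank-2 edges of }I^*) = |I| + (\text{isolated rank-2 edges of }I) + 1$, i.e. $r(I^*) = r(I)+1$, and then $\rank A_{I^*} = r(I^*) > r(I) = \rank A_I$ by \cref{thm:chara}. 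Finally the $O(|E|^2)$ bound: the walk has length $O(|E|)$, each step involves a constant-dimension linear-algebra computation over $\F$ (intersections, orthogonal spaces $X^{\perp_{\alpha\beta}}$, each in $\F^2$), and re-establishing irredundancy via the shortening procedures of \cref{sec:augmentation} costs $O(|E|)$; a naive implementation that re-scans gives the stated $O(|E|^2)$.

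The main obstacle I expect is the careful case analysis at the junctions and endpoints: verifying (VL) for $I^*$ requires tracking how the unique forced labels at each degree-2 vertex of $I^*$ are obtained, and showing that the compatibility and irredundancy conditions were exactly the right hypotheses to prevent a degenerate vertex from appearing in $I^*$ — in particular handling the case where $\calT$ revisits a vertex, where the two subspaces recorded there must combine to still leave the forced $+$- and $-$-labels distinct. A secondary subtlety is confirming that splitting an isolated rank-2 edge does not accidentally merge two components in a way that violates (Cycle) (a newly created cycle must still contain a rank-1 $+$-edge and a rank-1 $-$-edge), which again traces back to where along $\calT$ the endpoint conditions ($\Ai$),($\Al$) inject the new rank-1 edges.
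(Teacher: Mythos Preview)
Your plan differs fundamentally from the paper's, and the direct one-shot modification you propose does not work in general. The paper does \emph{not} take the augmenting space-walk $\calT$ and produce $I^*$ by a single edge-swap along the underlying walk. Instead, it runs an \emph{iterative} procedure: repeatedly replace the pair $(I,\calT)$ by $(I',\calT')$ where $I'$ is only a \emph{quasi}-matching (satisfying a weakened (q-Cycle) instead of (Cycle)) with $r(I')=r(I)$, and $\calT'$ is an augmenting space-walk for $I'$. A potential $\theta(I,\calT)=O(|E|)$ strictly decreases at each step, and only in the base case $\calT=\calP_0$ with $\calP_0$ simple does the paper set $I^*:=I\cup P_0$ and apply an elimination to obtain a matching with $r(I^*)>r(I)$. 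The $O(|E|^2)$ bound is $O(|E|)$ iterations times $O(|E|)$ per iteration, not the cost you describe.

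The concrete obstruction to your approach is visible already in the case where $\calP_m$ is not simple (Section~4.5 of the paper): the underlying walk revisits an edge $\alpha_0\beta_0=\alpha_k\beta_k$, so ``add all non-$I$ edges along the outer pieces'' would force $\deg_{I^*}(\beta_0)\ge 3$, violating (Deg). Irredundancy only limits each vertex to two appearances in $\calT$; it does not bound the degree in $I^*$ once you also count the edges of $I$ already incident there (a vertex in a rank-1 component of $I$ can have $\deg_I=2$ and still appear in an outer piece). Similarly, when $m\ge1$ your ``delete the traversed rank-2 edges of the rank-1 component'' is underspecified: the paper needs several pages of case analysis (conditions ($\No$), ($\Ni$), cycle vs.\ path component, whether elimination propagates, how earlier $\calP_\ell,\calQ_k$ interact with the modified region) precisely because a naive local surgery on $\calQ_m$ breaks the validity of the \emph{earlier} parts of $\calT$ as space-walks for $I'$. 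Finally, even in the base case the modification is $I^*=I\cup P_0$ (addition only), not the ``split the isolated rank-2 edge into two non-$I$ edges'' you describe; the rank-2 edge stays and simply ceases to be isolated. Your accounting for $r(I^*)=r(I)+1$ and your (VL) verification are therefore built on the wrong edge operation.
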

The augmentation procedure
is the most difficult but intriguing part of this paper,
and the proof of \cref{thm:augmentation} is given in \cref{sec:augmentation}.

\subsection{Finding an augmenting space-walk}\label{subsec:finding}
In this subsection,
we present an algorithm for finding either an optimality witness
or an augmenting space-walk,
which is inspired by the computation of the Wong sequence; see \cref{subsec:Wong sequence} for details.
\begin{thm}\label{thm:labeling}
	For a matching $I$,
	we can find either an optimality witness
	or an augmenting space-walk for $I$
	in $O(|E|)$ time.
\end{thm}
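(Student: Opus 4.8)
The plan is to design a breadth-first-style labeling algorithm that simultaneously grows all candidate augmenting space-walks from every legal starting point, and either certifies optimality (by reading off an optimality witness from the final labels) or detects the first vertex where the walk can ``escape'' to satisfy condition~$(\Al)$. Concretely, I would maintain, for each node $\gamma$, a collection of reachable subspaces: for a node $\beta$ a family of subspaces $Y \subseteq V_\beta$ with $Y \not\subseteq \kerR$ of the relevant edge, reached by an outer space-walk started at some $\beta_0$ with $Y(\calP_0) = \ker_I(\beta_0) \neq \{0\}$, possibly followed by alternating inner and outer space-walks; and symmetrically a family of subspaces $X \subseteq U_\alpha$ for each $\alpha$. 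Since $\dim X_i \le 1$ and the inner-walk spaces are among the four fixed labels $U_\alpha^\pm, V_\beta^\pm$, the number of distinct relevant subspaces at each node is $O(1)$ (a line, the full space, or zero), so the whole label structure has size $O(|E|)$ and can be propagated in $O(|E|)$ time.

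The propagation rules are dictated by the definitions in \cref{subsec:augmenting space-walk}: from a label $X$ at $\alpha$ reached along an outer walk, we push $Y = X^{\perp_{\alpha \beta'}}$ to every $\beta'$ with $\alpha\beta'$ an isolated rank-2 edge in $I$ (continuing the outer walk); from a label $Y$ at $\beta$ we push $X = Y^{\perp_{\alpha'\beta}}$ to every $\alpha'$ with $\alpha'\beta \in E \setminus I$ and $Y \not\subseteq \kerR(A_{\alpha'\beta})$. When an outer space-walk's current last vertex $\alpha$ with space $X$ sits at $\beta(\calQ_i) = \beta(\calP_i)$-type junction, the compatibility condition forces a switch: an inner space-walk is \emph{forced} (its spaces are the fixed labels), so from the junction we jump through a rank-1 component along rank-2 edges to its other endpoints, re-emitting an outer space-walk there, subject to the sign-compatibility at $\alpha(\calQ_{i+1})$. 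The initialization rule is: for every $\beta_0$ with $\ker_I(\beta_0) \neq \{0\}$, seed the label $Y(\calP_0) = \ker_I(\beta_0)$ at $\beta_0$; and the \emph{acceptance} test at a node $\alpha$ is whether some reached outer-walk label $X$ at $\alpha$ satisfies $X \not\supseteq \ker_I(\alpha) \neq \{0\}$, i.e.\ $(\Al)$ — if so, trace back the labels to extract an irredundant augmenting space-walk.

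If no node is ever accepted, I would show the terminal label configuration yields an optimality witness. The idea: take $X_\alpha$ to be $U_\alpha$ minus the ``used'' directions (more precisely, $X_\alpha := \ker_I(\alpha)$ intersected with, or quotiented by, the span of all outer-walk labels reached at $\alpha$, and dually $Y_\beta$ for the $\beta$ side), and verify $A_{\alpha\beta}(X_\alpha, Y_\beta) = \{0\}$ for all $\alpha,\beta$ using the orthogonality relations baked into the propagation and the fact that the search did not escape. Counting dimensions, the failure of $(\Al)$ everywhere together with the structure of $\ker_I$ (\cref{eq:kera}) forces $r(I) = 2\mu + 2\nu - \sum_\alpha \dim X_\alpha - \sum_\beta \dim Y_\beta$, so $(X_\alpha, Y_\beta)$ is an optimality witness and $I$ is maximum by \cref{cor:min-max}. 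The complexity is $O(|E|)$ because each of the $O(|E|)$ labels is created once and triggers $O(\deg)$ propagation steps, each taking $O(1)$ time on $2\times 2$ blocks.

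\textbf{The main obstacle} I anticipate is the optimality-witness construction in the ``no augmenting space-walk'' case: it is easy to propagate labels, but proving that the resulting spaces $X_\alpha, Y_\beta$ are both mutually orthogonal under all blocks \emph{and} attain the exact dimension count of \cref{cor:min-max} requires a careful case analysis over the local structure around each node — in particular handling rank-1 components (where $\ker_I$ is one of the $\kerL/\kerR$ lines and the inner space-walks interact with the fixed valid labeling), isolated rank-2 edges (where outer walks pass through), and degree-$\le 1$ or degree-$0$ nodes. The subtlety is that a node may be reached by \emph{two} different labels (the irredundancy conditions $X \not\subseteq X'$, $Y \not\supseteq Y'$ are exactly what keep this from blowing up), and one must argue that when a node is reached by two incomparable lines, those two lines together span the $2$-dimensional space and hence that node contributes $0$ to the witness sum — which is precisely the configuration that would have triggered acceptance had it occurred at the ``wrong'' place. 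Pinning down this dichotomy cleanly, and matching it against the three cases of~\cref{eq:kera}, is where the real work lies.
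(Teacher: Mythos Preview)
Your overall strategy---label propagation from every $\beta$ with $\ker_I(\beta)\neq\{0\}$, acceptance when some reached label $X$ at $\alpha$ satisfies $(\Al)$, back-tracing to extract the walk, and reading an optimality witness off the final labels otherwise---is exactly what the paper does. The propagation rules you describe (outer steps through $E\setminus I$ and isolated rank-2 edges in $I$, inner steps along rank-1 components using the fixed $\pm$-labels) match the paper's cases (B) and (C), and your $O(|E|)$ accounting via ``at most $O(1)$ labels per node'' is the same bound.

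Where you fall short is precisely the point you flag as the main obstacle: the optimality-witness construction. Your proposed definition of $X_\alpha$ as ``$\ker_I(\alpha)$ intersected with, or quotiented by, the span of all outer-walk labels reached at $\alpha$'' is not right and would not make the dimension count work. The paper instead sets
\[
X_\alpha^* := \bigcap_{X\in\labeltt(\alpha)} X,
\qquad
Y_\beta^* := \sum_{Y\in\labeltt(\beta)} Y,
\]
with $X_\alpha^* = U_\alpha$ if $\labeltt(\alpha)=\emptyset$ and $Y_\beta^* = \{0\}$ if $\labeltt(\beta)=\emptyset$. The crucial observation you are missing---and which dissolves the ``careful case analysis'' you anticipate---is that the quantity $\sum_\alpha \dim X_\alpha^* + \sum_\beta \dim Y_\beta^*$ is \emph{invariant} under every propagation step: each update drops some $\dim X_\alpha^*$ by one and raises some $\dim Y_\beta^*$ by one (or the analogous balanced change along an inner walk). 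Hence the dimension identity $r(I) = 2\mu+2\nu - \sum_\alpha \dim X_\alpha^* - \sum_\beta \dim Y_\beta^*$ need only be checked at initialization, where it follows directly from the definition of $\ker_I$. With the dimension count thus automatic, all that remains is the orthogonality $A_{\alpha\beta}(X_\alpha^*,Y_\beta^*)=\{0\}$, which is a short edge-by-edge check using the termination condition and the structure of inner walks. So your plan is sound, but replace your tentative witness definition with the intersection/sum above and use the invariance argument rather than a direct dimension count.
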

\cref{thm:augmentation,thm:labeling} implies \cref{thm:main},
since at most $\min \{ \mu, \nu \}$ augmentations occur in the algorithm.
The proof of \cref{thm:labeling} is given in \cref{subsubsec:labeling,subsubsec:back tracking} below.

In order to find an optimality witness
or an augmenting space-walk,
we introduce a {\it label}
for
each vertex $\gamma$,
which is a vector subspace of $U_\alpha$ if $\gamma = \alpha$, and that of $V_\beta$ if $\gamma = \beta$.
Each pair $(\gamma, Z)$ of a vertex $\gamma$ and a label $Z$ at $\gamma$
has a back pointer to another pair $(\gamma', Z')$,
which represents that ``$Z$ is added to the label set $\labeltt(\gamma)$ through $Z'$ and $\gamma' \gamma$.''
At the end of the algorithm,
we obtain either an optimality witness for $I$ by composing labels
or an augmenting space-walk by tracking back pointers and concatenating them.

\subsubsection{Labeling procedure}\label{subsubsec:labeling}
For each vertex $\gamma$,
let $\labeltt(\gamma)$ be a label set (or a set of vector spaces) of $\gamma$.
While updating,
let
\begin{align*}
X_\alpha^* := \bigcap_{X \in \labeltt(\alpha)} X, \qquad Y_\beta^* := \sum_{Y \in \labeltt(\beta)} Y
\end{align*}
for each $\alpha$ and $\beta$,
where $X_\alpha^* := U_\alpha$ if $\labeltt(\alpha) = \emptyset$,
and $Y_\beta^* := \{0\}$ if $\labeltt(\beta) = \emptyset$.
As initialization,
let $\labeltt(\beta) := \{ \ker_I(\beta) \}$ for $\beta$ with $\ker_I(\beta) \neq \{0\}$,
and $\labeltt(\gamma) := \emptyset$ for other $\gamma$.

The initial stage of the labeling procedure is to
check whether there is a triple $(\alpha, \beta, Y)$
such that $\alpha \beta \in E \setminus I$,
$Y \in \labeltt(\beta)$,
and $Y^{\perp_{\alpha \beta}} \not\supseteq X_\alpha^*$.
If there is no such triple,
then output $X_\alpha^*$ and $Y_\beta^*$ for $\alpha, \beta$
as an optimality witness for $I$,
and
stop the procedure.
Otherwise choose such a triple $(\alpha, \beta, Y)$.
There are three cases:
\begin{itemize}
	\item[(A)]
	$\deg_I(\alpha) = 0$, or $\alpha$ belongs to a rank-1 connected component $C$
	such that $\deg_I(\alpha) = 1$ and $Y^{\perp_{\alpha \beta}} \neq \ker_I(\alpha) \in \{ U_\alpha^+, U_\alpha^- \}$.
	\item[(B)]
	$\alpha$ is incident to the isolated rank-2 edge $\alpha \beta'$ in $I$.
	\item[(C)]
	$\alpha$ belongs to a rank-1 connected component $C$ such that $\deg_I(\alpha) = 2$, or $\deg_I(\alpha) = 1$ and $Y^{\perp_{\alpha \beta}} = \ker_I(\alpha) \in \{ U_\alpha^+, U_\alpha^- \}$.
\end{itemize}

(A).
In this case, we have $Y^{\perp_{\alpha \beta}} \neq \ker_I(\alpha) \neq \{0\}$ (cf. ($\Al$) in \cref{subsec:augmenting space-walk}).
Define the back pointer from $(\alpha, Y^{\perp_{\alpha \beta}})$ to $(\beta, Y)$,
and go to \cref{subsubsec:back tracking}
to
obtain an augmenting space-walk.

(B).
This case will be an expansion of an outer space-walk.
Update
\begin{align}
\labeltt(\alpha) &\leftarrow \labeltt(\alpha) \cup \{ Y^{\perp_{\alpha \beta}} \},\label{eq:update outer a}\\
\labeltt(\beta') &\leftarrow \labeltt(\beta') \cup \{ (Y^{\perp_{\alpha \beta}})^{\perp_{\alpha \beta'}} \}\label{eq:update outer b}
\end{align}
and define the back pointers from $(\alpha, Y^{\perp_{\alpha \beta}})$ to $(\beta, Y)$
and from $(\beta', (Y^{\perp_{\alpha \beta}})^{\perp_{\alpha \beta'}})$ to $(\alpha, Y^{\perp_{\alpha \beta}})$.
Return to the initial stage.

(C).
This case will be an addition of an inner space-walk.
Update
\begin{align}\label{eq:update inner a}
\labeltt(\alpha) \leftarrow
\begin{cases}
\labeltt(\alpha) \cup \{ U_\alpha^+\} & \text{if $Y^{\perp_{\alpha \beta}} = U_\alpha^+$},\\
\labeltt(\alpha) \cup \{ U_\alpha^-\} & \text{if $Y^{\perp_{\alpha \beta}} = U_\alpha^-$},\\
\labeltt(\alpha) \cup \{ U_\alpha^+, U_\alpha^-\} & \text{if $U_\alpha^+ \neq Y^{\perp_{\alpha \beta}} \neq U_\alpha^-$};
\end{cases}
\end{align}
see the compatibility condition in \cref{subsec:augmenting space-walk}.
For each $X$ newly added to $\labeltt(\alpha)$,
do the following:
Define the back pointer from $(\alpha, X)$ to $(\beta, Y)$.
Choose the longest inner space-walk $\calQ$ in $C$ such that
$\alpha = \alpha(\calQ)$ and $X = X(\calQ)$,
where
we can assume $X = X(\calQ) = U_\alpha^+$
(if $X = X(\calQ) = U_\alpha^-$,
then we change all signs in the argument below).
For each vertex $\gamma$ in $\calQ$,
add to $\labeltt(\gamma)$ the subspace of $\calQ$ at $\gamma$
if $\labeltt(\gamma)$ does not have it:
\begin{align}\label{eq:update inner a b}
	\labeltt(\gamma) \leftarrow
	\begin{cases}
	\labeltt(\gamma) \cup \{ U_{\gamma}^+ \} & \text{if $\gamma = \alpha$},\\
	\labeltt(\gamma) \cup \{ V_{\gamma}^- \} & \text{if $\gamma = \beta$}.
	\end{cases}	
\end{align}
Define the back pointers of newly added labels along $\calQ$.
\begin{lem}\label{lem:labeling}
	If the algorithm outputs $X_\alpha^*$ and $Y_\beta^*$ for each $\alpha, \beta$,
	then it is an optimality witness for $I$.
	In addition, the running-time is $O(|E|)$.
\end{lem}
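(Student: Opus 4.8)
There are two things to establish: that the output is an optimality witness, and the $O(|E|)$ time bound. For the first, I would check that $((X_\alpha^*)_\alpha,(Y_\beta^*)_\beta)$ is feasible for the minimization in \cref{cor:min-max}, i.e.\ $\A(X_\alpha^*,Y_\beta^*)=\{0\}$ for all $\ainm$, $\binn$, and that its value $2\mu+2\nu-\sum_\alpha\dim X_\alpha^*-\sum_\beta\dim Y_\beta^*$ equals $r(I)$; then by \cref{cor:min-max} it is an optimality witness (and $I$ is maximum). Once feasibility is in hand, the inequality ``$\ge r(I)$'' for the value is free from \cref{cor:min-max}, so the real content is feasibility and the reverse inequality.

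Feasibility splits on the type of $\alpha\beta$. If $\alpha\beta\notin E$ then $\A=O$. If $\alpha\beta\in E\setminus I$, the stopping condition of the initial stage says precisely $Y^{\perp_{\alpha\beta}}\supseteq X_\alpha^*$, i.e.\ $\A(X_\alpha^*,Y)=\{0\}$, for every $Y\in\labeltt(\beta)$; since $Y_\beta^*=\sum_{Y\in\labeltt(\beta)}Y$, bilinearity gives $\A(X_\alpha^*,Y_\beta^*)=\{0\}$. For $\alpha\beta\in I$ --- which is either an isolated rank-$2$ edge or an edge of a rank-$1$ component of $I$ (a rank-$1$ component being a single rank-$1$ edge or a member of $\mathcal{C}$) --- I would use invariants of $\labeltt$ proved by induction on the steps of the procedure:
\begin{itemize}
	\item[(a)] for $\alpha$ incident to an isolated rank-$2$ edge $\alpha\beta'$, every member of $\labeltt(\alpha)$ is at most $1$-dimensional and $\labeltt(\beta')=\{\,X^{\perp_{\alpha\beta'}}:X\in\labeltt(\alpha)\,\}$, so $Y_{\beta'}^*=(X_\alpha^*)^{\perp_{\alpha\beta'}}$ and $\A(X_\alpha^*,Y_{\beta'}^*)=\{0\}$ (using that $\perp_{\alpha\beta'}$ is an involution on subspaces, as $A_{\alpha\beta'}$ is nonsingular);
	\item[(b)] for a vertex $\gamma$ of a rank-$1$ component, $\labeltt(\gamma)\subseteq\{U_\alpha^+,U_\alpha^-\}$ if $\gamma=\alpha$ and $\labeltt(\gamma)\subseteq\{V_\beta^+,V_\beta^-\}$ if $\gamma=\beta$;
	\item[(c)] for every edge $\alpha\beta$ of a rank-$1$ component, $\A(X_\alpha^*,Y_\beta^*)=\{0\}$ holds throughout.
\end{itemize}
Invariant (a) is immediate from case (B), each firing of which appends a matched pair $(X,X^{\perp_{\alpha\beta'}})$; invariant (b) holds because such a vertex is touched only by the initialization (placing $\ker_I(\beta)\in\{V_\beta^+,V_\beta^-\}$, by~\eqref{eq:++ --}) and by case (C) (appending only spaces from $\{U_\alpha^+,U_\alpha^-\}$ or $\{V_\beta^+,V_\beta^-\}$). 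Invariant (c) is the crux. For its inductive step one verifies that the case-(C) update, having chosen the \emph{longest} inner space-walk $\calQ$ from the current vertex, inserts at the two endpoints of each traversed edge the matched pair $U_{\alpha_i}^+,V_{\beta_i}^-$ or $U_{\alpha_i}^-,V_{\beta_i}^+$ (according to that edge's sign), and that the ``longest'' rule, together with conditions (Path) and (Cycle), forces $\calQ$ to continue across --- hence to cover both ends of --- every rank-$2$ edge whose endpoint labels it would otherwise unbalance. This coordination is exactly what the forms require: for a rank-$1$ edge one side suffices (its kernel absorbs the other, by~\eqref{eq:++ --}), whereas across a rank-$2$ edge inside a rank-$1$ component both endpoints must move in lockstep. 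Pinning down this interplay between the longest-walk rule, the $2$-colouring, (Path)/(Cycle), and~\eqref{eq:+-}--\eqref{eq:++ --} is the main obstacle.

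For the reverse inequality it suffices that $\sum_\alpha\dim X_\alpha^*+\sum_\beta\dim Y_\beta^*=2\mu+2\nu-r(I)$; writing $d(\gamma)$ for $\dim X_\alpha^*$ or $\dim Y_\beta^*$, this is $\sum_\gamma(d(\gamma)-\dim\ker_I(\gamma))=r(I)$, using $\sum_\alpha\dim\ker_I(\alpha)=2\mu-r(I)$ and $\sum_\beta\dim\ker_I(\beta)=2\nu-r(I)$ from the proof of \cref{cor:ker}. Since case (A) never fired, an isolated vertex keeps its trivial label and contributes $0$; an isolated rank-$2$ edge $\alpha\beta'$ has $d(\alpha)+d(\beta')=2$ by (a), matching its contribution of $2$ to $r(I)$; and for a rank-$1$ component $C$, invariant (b) lets one express $\sum_{\gamma\in C}(d(\gamma)-\dim\ker_I(\gamma))$ through the sizes $|\labeltt(\cdot)|$, the numbers of degree-$1$ vertices, and the fact that every interior vertex of $C$ meets exactly one $+$-edge and one $-$-edge, after which a label-level refinement of (c) balances the $\CL$- and $\CR$-sums so that what remains is exactly $|C|$, the number of edges of $C$. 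This last step is a finite case analysis on the shape of $C$ (cycle, or path with its two degree-$1$ ends in prescribed sides), where the rank-$1$-ness of the end edges ((Path)) and the lone initial labels at degree-$1$ $\beta$-ends make the count close.

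For the running time, every vertex acquires only $O(1)$ labels: a rank-$1$-component vertex at most the two spaces of (b); a vertex $\alpha$ at an isolated rank-$2$ edge at most two, since after two distinct $1$-dimensional labels $X_\alpha^*=\{0\}$ and thereafter no triple $(\alpha,\beta,Y)$ can be picked; and the initialization at most one. A work-list that, on appending a label to some $\labeltt(\beta)$, scans the edges $\alpha\beta\in E\setminus I$ once to enqueue candidate triples, and re-examines the $O(\deg(\alpha))$ edges at $\alpha$ only on the $O(1)$ occasions when $X_\alpha^*$ shrinks, keeps the triple search at $O(|E|)$. Each case-(B) step is $O(1)$, and a case-(C) propagation along an inner space-walk aborts at the first vertex already carrying the space it would insert, so the total propagation inside a component $C$ is $O(|C|)$; summing these (and disposing of vertices and parts not connected by edges in $O(1)$ each) yields the claimed $O(|E|)$ bound.
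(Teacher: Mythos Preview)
Your feasibility argument is essentially the paper's: the termination condition handles $\alpha\beta\in E\setminus I$; for an isolated rank-$2$ edge $\alpha\beta'$ the paired updates in case~(B) give $Y^{\perp_{\alpha\beta'}}\in\labeltt(\alpha)$ whenever $Y\in\labeltt(\beta')$ (your invariant~(a)); and for a rank-$1$ edge $\alpha\beta$ (say a $+$-edge) one uses that $V_\beta^-\in\labeltt(\beta)$ forces $U_\alpha^+\in\labeltt(\alpha)$, since the only route to $(\beta,V_\beta^-)$ is through $(\alpha,U_\alpha^+)$ along the inner space-walk. The running-time argument via $|\labeltt(\gamma)|\le 2$ is also the paper's.

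Where you diverge is the value equality, and there you work much harder than necessary. The paper's observation is simply that the quantity $\sum_\alpha\dim X_\alpha^*+\sum_\beta\dim Y_\beta^*$ is \emph{invariant} under every update: in case~(B) the rank-$2$ edge $\alpha\beta'$ makes $\perp_{\alpha\beta'}$ a dimension-complementing anti-isomorphism, so the drop in $\dim X_\alpha^*$ is exactly matched by the rise in $\dim Y_{\beta'}^*$; in case~(C) the inner space-walk adds $U_{\alpha_i}^+$ and $V_{\beta_i}^-$ in lockstep, and (by your own invariant~(b) together with the fact that $V_{\beta_i}^-$ can only have arrived via $U_{\alpha_i}^+$) one is new iff the other is. Since initially $X_\alpha^*=U_\alpha$ and $Y_\beta^*=\ker_I(\beta)$, the initial sum is $2\mu+(2\nu-r(I))$, and the equality $2\mu+2\nu-\sum_\alpha\dim X_\alpha^*-\sum_\beta\dim Y_\beta^*=r(I)$ follows for free. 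Your component-by-component computation of $\sum_{\gamma\in C}(d(\gamma)-\dim\ker_I(\gamma))=|C|$ for rank-$1$ components $C$ is doable in principle, but the sketch you give (``a label-level refinement of~(c) balances the $\CL$- and $\CR$-sums'') is not yet a proof, and the invariance argument makes the whole detour unnecessary.
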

\begin{proof}
	In the initial phase,
	it holds
	$r(I) = 2\mu + 2\nu - \sum_{\alpha} \dim X_\alpha^* - \sum_{\beta} \dim Y_\beta^*$.
	Since the sum of $\dim X_\alpha^*$ and $\dim Y_\beta^*$ for $\alpha, \beta$ does not change during the algorithm,
	it suffices to show $\A(X_\alpha^*, Y_\beta^*) = \{0\}$ for all $\alpha \beta \in E$ on the outputs $X_\alpha^*$ and $Y_\beta^*$.

	For $\alpha \beta \in E \setminus I$,
	we have $\A (X_\alpha^*, Y_\beta^*) = \{ 0 \}$ by the termination condition.
	For a rank-2 edge $\alpha \beta \in I$,
	we have $Y^\perpab \in \labeltt(\alpha)$ if $Y \in \labeltt(\beta)$.
	Hence it holds that $\A (X_\alpha^*, Y_\beta^*) \subseteq \A(\bigcap_{Y \in \labeltt(\beta)} Y^\perpab, \sum_{Y \in \labeltt(\beta)} Y) = \{0\}$.
	For a rank-1 edge $\alpha\beta \in I$,
	we can assume that $\alpha\beta$ is a $+$-edge.
	Then $U_\alpha^+ = \kerL(\A)$ and $V_\beta^+ = \kerR(\A)$.
	If $V_\beta^- \in \labeltt(\beta)$ then $U_\alpha^+ \in \labeltt(\alpha)$,
	since an inner space-walk $\calQ$ reaching $(\beta, V_\beta^-)$
	comes from $(\alpha, U_\alpha^+)$.
	Thus, if $V_\beta^- \not\in \labeltt(\beta)$
	then $\A(X_\alpha^*, Y_\beta^*) \subseteq \A(X_\alpha^*, \kerR(\A)) = \{0\}$,
	and if $V_\beta^- \in \labeltt(\beta)$
	then $\A(X_\alpha^*, Y_\beta^*) \subseteq \A(\kerL(\A), Y_\beta^*) = \{0\}$.

	By $|\labeltt(\gamma)| \leq 2$ for each $\gamma$,
	every edge in $E$ is checked at most two times during the labeling procedure;
	the labeling procedure can be done in $O(|E|)$ time.
\end{proof}

\subsubsection{Back tracking}\label{subsubsec:back tracking}
Suppose that
we have $(\alpha, X)$
with $X \in \labeltt(\alpha)$ and $X \neq \ker_I(\alpha) \neq \{0\}$.
By tracking the back pointer,
we obtain an augmenting space-walk for $I$.
As initialization,
define $\calT := (Y, \beta \alpha, X)$,
where $(\alpha, X)$ has the back pointer to $(\beta, Y)$.
Observe $\beta \alpha \not\in I$ and $Y \not\subseteq \kerR(\A)$,
which implies that $(Y, \beta \alpha, X)$ is an outer space-walk of $I$.

We update $\calT$ as follows.
Let $Y$ and $\beta \alpha$ be the initial space and initial edge of $\calT$,
respectively.
Suppose that $(\beta, Y)$ has the back pointer to $(\alpha', X)$
and $(\alpha', X)$ has the back pointer to $(\beta', Y')$.
Then update
\begin{align*}
	\calT \leftarrow (Y', \beta' \alpha', (Y')^{\perp_{\beta' \alpha'}}) \circ (X, \alpha' \beta, Y) \circ \calT.
\end{align*}
If $(\beta', Y')$ has no back pointer,
i.e., $Y' = \ker_{\beta'}(I)$,
then output the resulting $\calT$ as an augmenting space-walk.
Otherwise, consider $(\beta', Y')$ of the resulting $\calT$,
and repeat the above update.
Clearly, the running-time of the back tracking procedure is $O(|E|)$.

Then the following holds.
\begin{lem}\label{lem:constructing}
	The output $\calT$ is an augmenting space-walk for $I$.
\end{lem}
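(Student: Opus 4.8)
The plan is to verify that the sequence $\calT$ produced by the back tracking procedure of \cref{subsubsec:back tracking} satisfies all the defining requirements of an augmenting space-walk: namely, that it is a compatible alternating concatenation $\calP_0 \circ \calQ_1 \circ \calP_1 \circ \cdots \circ \calQ_m \circ \calP_m$ of outer and inner space-walks, and that it obeys the endpoint conditions $(\Ai)$ and $(\Al)$. I would argue this by induction on the number of update steps of the back tracking procedure, with the stronger inductive hypothesis that at every stage the current $\calT$ is a compatible concatenation ending in an outer space-walk $\calP$ with $X(\calP) = X$ (the label currently attached to the last vertex), and that all connection conditions internal to $\calT$ already hold; only the initial condition $(\Ai)$ is ``pending'' until the procedure terminates.

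First I would handle the base case and the two ``piece'' checks. At initialization $\calT = (Y, \beta\alpha, X)$; since the triple chosen in the labeling procedure has $\alpha\beta \in E\setminus I$ and $Y \in \labeltt(\beta)$ was obtained from a chain of labels, one has $Y \not\subseteq \kerR(\A)$ (this is exactly the condition $Y^{\perpab}\ne\ldots$ encoded in the selection rule for the triple, together with $X = Y^{\perpab}$), so $(Y,\beta\alpha,X)$ is a legitimate outer space-walk — this uses the observation already noted in the text. Because we entered back tracking via case (A), the vertex $\alpha = \alpha(\calP_m)$ satisfies $X \ne \ker_I(\alpha) \ne \{0\}$, which is precisely $(\Al)$. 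For the inductive step one update appends $(Y', \beta'\alpha', (Y')^{\perp_{\beta'\alpha'}}) \circ (X, \alpha'\beta, Y)$ to the front of $\calT$. Here I must check two things: (i) $(X, \alpha'\beta_1, Y_1, \ldots)$ extends to an inner space-walk $\calQ$ — this is exactly the inner space-walk $\calQ$ in the component $C$ that was selected and whose labels were recorded in step~(C) of the labeling procedure, so $(X,\ldots,Y)$ with $X = X(\calQ)$, $Y = Y(\calQ)$, $\alpha' = \alpha(\calQ)$, $\beta = \beta(\calQ)$ is valid by construction; (ii) the new front piece $(Y',\beta'\alpha',(Y')^{\perp_{\beta'\alpha'}})$ is an outer space-walk, which follows because $(\beta',Y')$ got a back pointer to $(\alpha',X)$ only through step~(B), where $\beta'$ plays the role of the ``$\beta$'' of case (B) so $\beta'\alpha' \in E\setminus I$, $\alpha'\beta$ is the isolated rank-2 edge in $I$, and $Y' \not\subseteq \kerR(A_{\beta'\alpha'})$ again by the label-selection rule.

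The heart of the argument — and the step I expect to be the main obstacle — is verifying the two compatibility conditions at each splice point, especially the second one involving the $+/-$ sign constraint $X(\calP_i) \ne U^-_{\alpha(\calQ_{i+1})}$ when $X(\calQ_{i+1}) = U^+_{\alpha(\calQ_{i+1})}$ (and its mirror image). The first compatibility condition $\beta(\calQ_i) = \beta(\calP_i)$, $Y(\calQ_i) = Y(\calP_i)$ is straightforward from the concatenation formula, since the inner piece ends at $(\ldots, \alpha'\beta, Y)$ and the following outer piece (the old $\calT$) begins with $(Y, \beta\alpha, \ldots)$ by the inductive hypothesis. The sign condition at the other end is where the design of step~(C) matters: the labeling procedure, when it added $X = Y^{\perpab}$ to $\labeltt(\alpha')$ in case~(C), explicitly branched on whether $Y^{\perpab}$ equals $U^+_{\alpha'}$, equals $U^-_{\alpha'}$, or neither, precisely so that the inner space-walk $\calQ$ launched from $(\alpha', X)$ has starting space compatible with the orthogonal space $X = X(\calP_i)$ arriving from the outer side; I would spell this out by a short case analysis matching the three cases of~\eqref{eq:update inner a}, using that $\dim X(\calP_i) \le 1$ so that $X(\calP_i) \ne U^-_{\alpha'}$ is equivalent to $X(\calP_i) \not\supseteq U^-_{\alpha'}$, i.e. $X(\calP_i)^{\perp}\ldots$ does not force the wrong sign. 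Finally, when the procedure halts — which happens exactly when the current initial pair $(\beta', Y')$ has no back pointer — the initialization rule $\labeltt(\beta) := \{\ker_I(\beta)\}$ tells us $Y' = \ker_I(\beta') \ne \{0\}$, giving $(\Ai)$; and $(\Al)$ was secured at the base case and is preserved because the last piece $\calP_m$ is never modified after back tracking begins. Combining all of this, $\calT$ is a compatible concatenation satisfying $(\Ai)$ and $(\Al)$, hence an augmenting space-walk for $I$, and (as the text notes) the shortening procedures of \cref{sec:augmentation} can make it irredundant if needed.
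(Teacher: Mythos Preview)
Your inductive framework and the handling of $(\Ai)$ and $(\Al)$ are fine, but the inductive step has a real gap: you treat every back-tracking update as if it splices an inner piece $(X,\alpha'\beta,Y)$ onto a fresh outer piece $(Y',\beta'\alpha',(Y')^{\perp_{\beta'\alpha'}})$. That is only one of three situations. The back pointer from $(\beta,Y)$ to $(\alpha',X)$ may have been created either in update~\eqref{eq:update outer b} (then $\alpha'\beta$ is an isolated rank-$2$ edge of $I$, hence belongs to an \emph{outer} walk) or in update~\eqref{eq:update inner a b} (then $\alpha'\beta$ lies in an inner walk). Likewise the back pointer from $(\alpha',X)$ to $(\beta',Y')$ can come from~\eqref{eq:update outer a}, from~\eqref{eq:update inner a}, or from~\eqref{eq:update inner a b}; your claim in~(ii) that it arises ``only through step~(B)'' is false in general. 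The paper's proof proceeds by exactly this three-way case split on the provenance of the two back pointers: both outer (the two prepended edges merely lengthen the current outer space-walk, and $(Y')^{\perp_{\beta'\alpha'}}=X$), both inner (they lengthen the current inner space-walk, again with $(Y')^{\perp_{\beta'\alpha'}}=X$), or the mixed case you describe (the outer/inner junction where compatibility must be checked). Without that split your induction does not go through, because in the first and third cases no new $\calQ_i$ or $\calP_i$ is being created at all.

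A smaller confusion surfaces in your compatibility discussion: in the junction case the label actually added to $\labeltt(\alpha')$ in~\eqref{eq:update inner a} is $U_{\alpha'}^{+}$ or $U_{\alpha'}^{-}$, not $Y^{\perp_{\alpha'\beta'}}$ itself, and the last space $X(\calP)=(Y')^{\perp_{\beta'\alpha'}}$ of the outer piece need not equal the first space $X(\calQ)=X\in\{U_{\alpha'}^{+},U_{\alpha'}^{-}\}$ of the inner piece. Writing ``$X=X(\calP_i)$'' conflates these. The branching in~\eqref{eq:update inner a} is precisely what guarantees the sign condition $X(\calP)\neq U_{\alpha'}^{\mp}$ when $X(\calQ)=U_{\alpha'}^{\pm}$, but your phrasing obscures this. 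Finally, note that the paper obtains irredundancy directly---each label is added to $\labeltt(\gamma)$ at most once, so no pair $(\gamma,Z)$ recurs in $\calT$---rather than deferring to the shortening procedures of \cref{sec:augmentation}.
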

\begin{proof}
	We consider when the back pointers from $(\beta, Y)$ to $(\alpha', X)$ and from $(\alpha', X)$ to $(\beta', Y')$
	are defined.
	If the former is defined in the update~\eqref{eq:update outer b}
	and the latter in~\eqref{eq:update outer a},
	then
	$(Y')^{\perp_{\beta' \alpha}} = X$
	and
	$(Y', \beta' \alpha, (Y')^{\perp_{\beta' \alpha}}) \circ (X, \alpha \beta, Y)$ constitutes a part of an outer space-walk of $I$.
	If the former is defined in the update~\eqref{eq:update inner a b} and the latter
	in~\eqref{eq:update inner a},
	then $(Y', \beta' \alpha, (Y')^{\perp_{\beta' \alpha}})$
	and $(X, \alpha \beta, Y)$
	constitute a part of an outer and inner space-walk of $I$,
	respectively.
	If both the former and latter are defined in the update~\eqref{eq:update inner a b},
	then
	$(Y')^{\perp_{\beta' \alpha}} = X$
	and
	$(Y', \beta' \alpha, (Y')^{\perp_{\beta' \alpha}}) \circ (X, \alpha \beta, Y)$ constitutes a part of an inner space-walk of $I$.
	Thus the output $\calT$ forms an augmenting space-walk for $I$.
	In particular,
	$\calT$ is irredundant,
	since no label appears twice in $\calT$ by the description of the labeling procedure.
\end{proof}

By \cref{lem:labeling,lem:constructing},
we obtain \cref{thm:labeling}.

\subsection{Augmenting space-walk and Wong sequence}\label{subsec:Wong sequence}
This subsection is devoted to clarifying the relationship between augmenting space-walk and Wong sequence,
with highlighting the motivation behind our algorithm.
The contents of this subsection are irrelevant to \cref{sec:augmentation} (the validity of the algorithm).

Let $A$ be an $m \times n$ symbolic matrix of the form~\eqref{eq:linear A}, not necessarily partitioned.
Let $\tilde{A}$ be a substitution of $A$.
For a vector space $X \subseteq \F^m$,
we denote by
\begin{align*}
X^{\perp_{\tilde{A}}} := \{ y \in \F^{n} \mid \tilde{A}(x, y) = 0 \text{ for all } x \in X \},
\end{align*}
the orthogonal vector space with respect to $\tilde{A}$.
For a vector space $Y \subseteq \F^{n}$,
$Y^{\perp_{A_i}}$ is defined analogously.
In addition,
let
\begin{align*}
Y^{\perp_A} := \bigcap_{i = 1}^k Y^{\perp_{A_i}}.
\end{align*}
The (orthogonal version of) {\it Wong sequence}~\cite{JCSS/IKQS15} for $(A, \tilde{A})$
is a sequence $X^0, Y^1, X^1, \dots$ of vector spaces determined by
$X^0 := \F^m$, $Y^i := (X^{i-1})^{\perp_{\tilde{A}}}$,
and $X^i := (Y^i)^{\perp_{A}}$ for $i = 1,2,\dots$.
By $\F^m = X^0 \supseteq X^1 \supseteq \cdots$ and $\kerR(\tilde{A}) = Y^1 \supseteq Y^2 \supseteq \cdots$,
the limits $X^\infty, Y^\infty$
are obtained by $\min \{ m, n \}$ iterations
and
satisfy $Y^\infty = (X^\infty)^{\perp_{\tilde{A}}}$ and
$X^\infty = (Y^\infty)^{\perp_{A}}$.
In~\cite{JCSS/IKQS15},
they showed $\rank A = \rank \tilde{A}$ if $\kerL(\tilde{A}) \subseteq X^\infty$,
which is used as an ``optimality witness'' of $\tilde{A}$ in IQS-algorithm~\cite{CC/IQS18}.

Let us consider a general partitioned matrix $A =(A_{\alpha \beta})$ of the form~\eqref{eq:A},
where $A_{\alpha \beta}$ is an $m_\alpha \times n_\beta$ matrix.
The partitioned structure of $A$ simplifies the computation of $(Y)^{\perp_A}$.
Indeed, $X = (Y)^{\perp_A}$ is represented
as
$X = \bigoplus_\alpha X_\alpha$
for
\begin{align*}
X_\alpha = \left(\bigcap_\beta \left(\proj_\beta(Y)\right)^{\perp_{A_{\alpha \beta}}}\right),
\end{align*}
where
$\proj_\beta(Y)$ denotes
the projection of $Y \subseteq \bigoplus_\beta \F^{n_\beta}$ to the $\beta$-th coordinate.
Thus,
the essential part to obtain the Wong sequence
is the computation of $\proj_\beta((\bigoplus_\alpha X_\alpha)^{\perp_{\tilde{A}}})$.

Consider the case where each $A_{\alpha \beta}$ is $1 \times 1$.
As mentioned in Introduction,
computing $\rank A$ is equivalent to computing the size of a maximum matching of the bipartite graph $G$ corresponding to the nonzero pattern of $A$.
In this case,
$X_\alpha = \{0\}$ if $\alpha$ is incident to some edge $\alpha\beta$ in $G$ with $\proj_\beta(Y) = \F$,
and
$X_\alpha = \F$ otherwise.
Suppose further that $\tilde{A}$ is the canonical substitution of $A$ with respect to a (bipartite) matching $I$.
Then
$Y_\beta := \proj_\beta((\bigoplus_\alpha X_\alpha)^{\perp_{\tilde{A}}}) = \{0\}$ if $\beta$ is incident to an edge $\alpha\beta$ in $I$ with $X_\alpha = \F$,
and $Y_\beta = \F$ otherwise.
In particular,
for
$Y^1 = \kerR(\tilde{A})$,
the $\beta$-th projection of $Y^1$ is $\F$ if and only if $\beta$ is unmatched by $I$.
The computation of the Wong sequence
is nothing but
the BFS search for an augmenting path in $G$;
the node set $\{ \alpha \mid X_\alpha = \{0 \}\} \cup \{ \beta \mid Y_\beta = \F \}$ in the $i$-th phase
coincides with the reachable node set from the unmatched nodes via the BFS search for an augmenting path in $i$ steps.
The optimality condition $\kerL(\tilde{A}) \subseteq X_\infty$
holds if and only if
there is no augmenting path for the current matching $I$.
The above argument says that
the matching structure of $I$ can allow us to compute $\proj_\beta((\bigoplus_\alpha X_\alpha)^{\perp_{\tilde{A}}})$
in a combinatorial manner.

We then consider a $(2 \times 2)$-type generic partitioned matrix $A$.
In fact,
the definition of an augmenting space-walk and our labeling procedure for $A$
are essentially the computation of the Wong sequence for $(A, \tilde{A}_I)$,
where $I$ is a matching of $A$.
The vector spaces $X_\alpha^*$ and $Y_\beta^*$ in the labeling procedure (\cref{subsubsec:labeling})
play a role of the above $X_\alpha$ and $\proj_\beta((\bigoplus_\alpha X_\alpha)^{\perp_{\tilde{A}_I}})$,
respectively.
If $\beta$ is incident to an isolated rank-2 edge $\alpha \beta$ in $I$,
then $\proj_\beta((\bigoplus_\alpha X_\alpha)^{\perp_{\tilde{A}_I}}) = (X_\alpha)^{\perp_{\alpha \beta}}$,
which corresponds to (B) in the labeling procedure.
If $\beta$ belongs to a rank-1 connected component of $I$,
then $\proj_\beta((\bigoplus_\alpha X_\alpha)^{\perp_{\tilde{A}_I}})$ can be determined by inner space-walks as follows.
\begin{lem}\label{lem:proj}
	Suppose that $\beta$ belongs to a rank-1 connected component of $I$.
	If
	there is an inner space-walk $\calQ$ for $I$ from some $\alpha$ to $\beta$ such that $Y(\calQ) = V_\beta^+$ and $X_\alpha \not\supseteq U_\alpha^+$,
	then
	$\proj_\beta((\bigoplus_\alpha X_\alpha)^{\perp_{\tilde{A}_I}}) \supseteq V_\beta^+$.
	Otherwise,
	$\proj_\beta((\bigoplus_\alpha X_\alpha)^{\perp_{\tilde{A}_I}}) \subseteq V_\beta^-$.
\end{lem}
The analogous statement interchanged $+$ with $-$ also holds.
These explain (C).
\begin{proof}
	We may assume that $I$ itself is a rank-1 connected component.
	As in the proof of \cref{thm:chara} (\cref{subsubsec:thm:chara}),
	via a basis transformation,
	we can change $\tilde{A}_I$ as a matrix of the form~\eqref{eq:EAF}.
	For notational simplicity,
	we use $1^+, 2^+, \dots, k^+, 1^-, 2^-, \dots, k^-$ as the row and column indices of $\tilde{A}_I$ after the transformation.
	Then
	$U_\alpha^+$ and $U_{\alpha}^-$ are
	the vector spaces spanned by the $\alpha^+$-th and $\alpha^-$-th unit vectors,
	respectively.
	The same holds for $V_{\beta}^+$ and $V_{\beta}^-$.
	We denote by $a_{\alpha \beta}^+$ and $a_{\alpha \beta}^-$
	the $\alpha^+ \beta^+$-th and $\alpha^- \beta^-$-th (nonzero) entries of $\tilde{A}_I$,
	respectively.

	Suppose first that the if-condition in the statement of \cref{lem:proj} holds.
	That is,
	suppose that, e.g., $X_{1} \not\supseteq U_{1}^+$
	and that $\beta$ belongs to the maximal inner space-walk $\calQ$ with $\alpha(\calQ) = 1$ and $X(\calQ) = U_1^-$.
	By $X_{1} \not\supseteq U_{1}^+$,
	there is $c \in \F$
	such that any $x \in \bigoplus_{\alpha} X_\alpha$ has of the form
	\begin{align*}
	x =
	\begin{blockarray}{cccccccc}
	1^+ & & & & 1^- & & & \\
	\begin{block}{[cccc|cccc]}
	cp & \ast & \cdots & \ast & p & \ast & \cdots & \ast \\
	\end{block}
	\end{blockarray}
	\end{align*}
	for some $p \in \F$.
	We can construct $y \in (\bigoplus_{\alpha} X_\alpha)^{\perp_{\tilde{A}_I}}$
	such that
	\begin{align}\label{eq:A_Iy}
	\tilde{A}_I y =
	\begin{blockarray}{cccccccc}
	1^+ & & & & 1^- & & & \\
	\begin{block}{[cccc|cccc]}
	1 & 0 & \cdots & 0 & -c & 0 & \cdots & 0 \\
	\end{block}
	\end{blockarray}.
	\end{align}
	Indeed,
	such $y$ is given as
	\begin{align*}
	y = 
	\begin{blockarray}{cccccccccccc}
	1^+ & & & & (k-1)^+ & k^+ & 1^- & 2^- & & & & & \\
	\begin{block}{[cccccc|cccccc]}
	0 & \cdots & 0 & \cdots & -\frac{a_{k-1k-1}^+}{a_{1k}^+ a_{k k-1}^+} & \frac{1}{a_{1k}^+} & 
	-\frac{c}{a_{11}^-} & \frac{ca_{21}^-}{a_{11}^- a_{22}^-} & \cdots & 0 & \cdots & 0\\
	\end{block}
	\end{blockarray},
	\end{align*}
	where
	the nonzero entries of $y$ can be determined along inner paths starting from $1$
	so that~\eqref{eq:A_Iy} is satisfied.
	In particular,
	the nodes in $\calQ$ correspond to the nonzero entries in the $+$ part of $y$.
	Since $\beta$ belongs to $\calQ$,
	it holds $\proj_{\beta}((\bigoplus_{\alpha} X_\alpha)^{\perp_{\tilde{A}_I}}) \supseteq V_{\beta}^+$,
	which implies the if-part.
	
	Let $\calQ'$ denote the maximal inner space-walk with $\beta(\calQ') = \beta$ and $Y(\calQ') = V_\beta^+$;
	the ``otherwise'' condition
	is equivalent to $X_\alpha \supseteq U_\alpha^+$ for any $\alpha$ in $\calQ'$.
	We show $y \notin (\bigoplus_{\alpha} X_\alpha)^{\perp_{\tilde{A}_I}}$
	for any $y$ with a nonzero $\beta^+$-th entry,
	which implies $\proj_{\beta}((\bigoplus_{\alpha} X_\alpha)^{\perp_{\tilde{A}_I}}) \subseteq V_{\beta}^-$.
	Suppose to the contrary that
	there is $y \in (\bigoplus_{\alpha} X_\alpha)^{\perp_{\tilde{A}_I}}$
	with a nonzero $\beta^+$-th entry.
	Let $\alpha \beta$ be the last edge of $\calQ'$.
	By $X_\alpha \supseteq U_\alpha^+$,
	the $\alpha^+$-th entry of $x \in \bigoplus_{\alpha} X_\alpha$
	can be arbitrary.
	Hence,
	if $\alpha(\calQ') = \alpha$,
	i.e.,
	$\alpha$ is incident only to $\alpha \beta$ in $I$, or $\alpha$ is incident to two edges $\alpha \beta$ and $\alpha \beta'$ in $I$
	and $\alpha \beta'$ is rank-1,
	then $x^\top \tilde{A}_I y \neq 0$ for $x \in \bigoplus_{\alpha} X_\alpha$ having nonzero only on the $\alpha^+$-th entry, a contradiction.
	Otherwise, $\alpha$ is incident to two edges $\alpha \beta$ and $\alpha \beta'$ in $I$
	and $\alpha \beta'$ is rank-2.
	In this case, the ${\beta'}^+$-th entry of $y$ must be nonzero for $y \in (\bigoplus_{\alpha} X_\alpha)^{\perp_{\tilde{A}_I}}$.
	By repeating the same argument,
	we obtain a contradiction at the initial edge of $\calQ'$.
\end{proof}
While the computation of the Wong sequence corresponds to the BFS of an auxiliary graph,
our algorithm does not.
An augmenting space-walk partially simulates the Wong sequence
for deriving $\kerL(\tilde{A}_I) \not\subseteq X^\infty$.

\section{Augmentation}\label{sec:augmentation}
In order to prove \cref{thm:augmentation},
we present an augmentation procedure
for a given matching $I$ and an augmenting space-walk $\calT$ with respect to $I$.
The procedure is an inductive construction that repeats to replace $(I,\calT)$ by $(I',\calT')$.
Here $I'$ is relaxed to be a {\it quasi-matching},
which is a weaker notion than matching,
with $r(I') = r(I)$ and $\calT'$ 
is an augmenting space-walk for $I'$.
In the base case $(I, \calT)$,
it holds $r(I') > r(I)$.
The irredundancy of $(I, \calT)$ always holds in the augmentation procedure,
which is easily verified.

\subsection{Preliminaries}\label{subsec:preliminaries}
We here introduce several notions for the augmentation procedure
and the proof of its validity.
For an outer or inner space-walk $\mathcal{R} = (Z_1, \gamma_1 \gamma_2, Z_2, \gamma_2 \gamma_3, \dots, \gamma_{k-1} \gamma_k, Z_k)$,
we define $\mathcal{R}[\gamma_i, \gamma_j]$ by the subsequence of $\mathcal{R}$ from $Z_i$ to $Z_j$ if $i < j$,
and by the subsequence of the inverse of $\mathcal{R}$ from $Z_i$ to $Z_j$ if $i > j$.
In particular, if $\gamma_i$ is the initial vertex $\gamma_1$ of $\mathcal{R}$,
then we denote $\mathcal{R}[\gamma_1, \gamma_j]$ by $\mathcal{R}(\gamma_j]$.
If $\gamma_j$ is the last vertex $\gamma_k$ of $\mathcal{R}$,
then we denote $\mathcal{R}[\gamma_i, \gamma_k]$ by $\mathcal{R}[\gamma_i)$.
The same notation is used for a walk $R$ in $G$.
For an outer or inner space-walk $\mathcal{R}$,
we denote the underlying walk of $\mathcal{R}$
by its italic style $R$.
Note that
an inner space-walk $\calQ$
is uniquely determined from its underlying inner walk $Q$.

\subsubsection{Quasi-matching}
In our algorithm,
we utilize a weaker notion than matching.
An edge subset $I \subseteq E$ is called a {\it quasi-matching} or {\it q-matching}
if $I$ satisfies (Deg), (VL), (Path), and the following weaker condition (q-Cycle) than (Cycle):
\begin{description}
	\item[(q-Cycle)]
	Each cycle component of $I$ has at least one rank-1 edge.
\end{description}
From a q-matching $I'$,
we easily obtain a matching $I$ with $r(I) \geq r(I')$;
see \cref{lem:elimination} in \cref{subsubsec:elimination} below.

We naturally extend an augmenting space-walk for a q-matching $I$.
Note here that, as a matching in \cref{subsec:def and chara}, if $I$ satisfies (Deg), (Path), and (q-Cycle)
then two labels on any vertex with degree 2
are uniquely determined.
(VL) requires that they are different.

\subsubsection{Elimination}\label{subsubsec:elimination}
Let $I$ be an edge subset satisfying (Deg), (q-Cycle), and (VL) (note that (Path) is not imposed).
The {\it elimination}
is an operation
of modifying $I$ to a matching $I'$ with $r(I') \geq r(I)$ as follows.
If $C$ is a cycle component of $I$ such that all $+$-edges in $C$ are rank-2,
i.e., $C$ violating (Cycle),
then we
remove all $-$-edges from $C$.
Suppose that $C$ is a non-isolated path component of $I$
such that one of the end edges is rank-2,
i.e., $C$ violates (Path).
We may assume that $C$ is of the form $(\alpha_1 \beta_1, \beta_1\alpha_2, \dots)$
and $\alpha_1 \beta_1$ is rank-2.
Then we remove $\beta_1 \alpha_2, \beta_2 \alpha_3, \dots, \beta_\ell \alpha_{\ell+1}$
for the maximum $\ell \geq 1$
such that all $\alpha_1 \beta_1, \alpha_2 \beta_2, \dots, \alpha_{\ell} \beta_{\ell}$ are rank-2.
If the other end edge is not removed and is also rank-2,
do the same procedure from this edge in the reverse way.

Observe that the elimination operation to a cycle component does not change $r$,
and to a path component does not decrease $r$.
Thus the following holds.
\begin{lem}\label{lem:elimination}
	Let $I$ be an edge subset satisfying {\rm (Deg)}, {\rm (q-Cycle)}, and {\rm (VL)}
	and
	$I'$ the edge subset obtained from $I$ by the elimination.
	Then $I'$ is a matching with $r(I') \geq r(I)$.
\end{lem}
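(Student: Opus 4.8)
\textbf{Proof plan for Lemma~\ref{lem:elimination}.}
The plan is to verify that the edge subset $I'$ produced by the elimination operation satisfies all four defining conditions of a matching, (Deg), (Path), (Cycle), (VL), and then to track the change in $r$ component by component. Since the elimination operates independently on each connected component of $I$, it suffices to argue on a single component $C$, distinguishing whether $C$ is a cycle component violating (Cycle), a non-isolated path component violating (Path), or a component already compatible with all matching conditions (in which case $C$ is left untouched and nothing needs to be checked).

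First I would handle the structural conditions. Removing edges never increases any degree, so (Deg) is preserved, and (VL) is preserved as well since the valid labeling of $I$ restricts to a valid labeling of $I'$ (equations~\eqref{eq:+-} and~\eqref{eq:++ --} are conditions per surviving edge, so they continue to hold; degenerate vertices cannot be created by deleting edges). The content is in (Path) and (Cycle). For a cycle component $C$ with all $+$-edges rank-2: after deleting all $-$-edges, each remaining $+$-edge becomes isolated (both its endpoints had degree exactly $2$ in the cycle, and lose their incident $-$-edges), so the new components are isolated rank-2 edges, which trivially satisfy (Path) and (Cycle) vacuously. For a path component $C = (\alpha_1\beta_1, \beta_1\alpha_2, \dots)$ with $\alpha_1\beta_1$ rank-2, after removing $\beta_1\alpha_2, \dots, \beta_\ell\alpha_{\ell+1}$ for the maximal such $\ell$, the prefix $\alpha_1\beta_1, \dots, \alpha_\ell\beta_\ell$ splits into isolated rank-2 edges, and the remaining suffix is a (possibly empty) path whose new end edge $\alpha_{\ell+1}\beta_{\ell+1}$ is rank-1 by maximality of $\ell$; symmetrically handling the other end if it too is rank-2, we conclude the surviving path component has both end edges rank-1, i.e., satisfies (Path), while (Cycle) is vacuous for path components. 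The one subtlety worth spelling out: if $\ell$ reaches the end of the path (the whole path consists of rank-2 edges), then $C$ decomposes entirely into isolated rank-2 edges; and if both end edges are rank-2 and the two reverse-direction deletions meet in the middle, the same conclusion holds. In every case $I'$ is a matching by \cref{thm:chara}.

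Next I would compute $r$. Recall $r(I) = |I| + (\text{number of isolated rank-2 edges in }I)$. For a cycle component $C$ of length $2k$ with all $+$-edges rank-2, we delete the $k$ $-$-edges and create $k$ isolated rank-2 edges, so $|I|$ drops by $k$ while the isolated-rank-2 count rises by $k$: $r$ is unchanged. For a path component, deleting $\ell$ edges $\beta_i\alpha_{i+1}$ turns $\alpha_1\beta_1, \dots, \alpha_\ell\beta_\ell$ into $\ell$ isolated rank-2 edges, so $|I|$ drops by $\ell$ while the isolated-rank-2 count rises by at least $\ell$ (it could rise by more if the suffix also degenerated, but that only helps); summing over both ends if applicable, $r$ does not decrease. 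Summing over all components gives $r(I') \ge r(I)$, completing the proof.

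The main obstacle is purely bookkeeping rather than conceptual: one must be careful about the boundary cases where the deletion index $\ell$ runs off the end of a path (so the path disappears entirely into isolated edges) and where both ends of a path are rank-2 and the two deletion sweeps overlap, to make sure no edge is double-counted and the surviving piece is correctly identified as either a valid path with rank-1 ends or a collection of isolated rank-2 edges. None of these cases changes the inequality $r(I') \ge r(I)$, but they do need to be checked so that the appeal to \cref{thm:chara} is legitimate.
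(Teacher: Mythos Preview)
Your proposal is correct and follows the same approach as the paper, which treats this lemma as essentially self-evident: the paper's entire justification is the one-sentence observation preceding the lemma statement that ``the elimination operation to a cycle component does not change $r$, and to a path component does not decrease $r$.'' Your write-up supplies the omitted details---verifying (Deg), (Path), (Cycle), (VL) for $I'$ component by component and tracking the $r$-count through the isolated rank-2 edges created---and correctly flags the boundary cases (deletion running off the end of a path, overlapping sweeps from both ends) that need to be checked for the appeal to \cref{thm:chara} to be legitimate.
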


\subsubsection{Propagation}\label{subsubsec:propagation}
Let $P = (\beta_1 \alpha_1, \alpha_1 \beta_2, \dots, \beta_k \alpha_k)$
be an outer walk.
For a vector subspace $Y_1 \subseteq V_{\beta_1}$,
the {\it front-propagation} $Y_1 \triangleright P$
is a sequence $(Y_1, \beta_1 \alpha_1, X_1, \alpha_1 \beta_2, Y_2, \dots, Y_k, \beta_k \alpha_k, X_k)$
such that $X_i = (Y_i)^{\perp_{\beta_i \alpha_i}}$ and $Y_{i+1} = (X_i)^{\perp_{\alpha_i \beta_{i+1}}}$ for each $i$.
For a vector subspace $X_k' \subseteq U_{\alpha_k}$,
the {\it back-propagation} $P \triangleleft X_k'$ for $P$ and $X_k'$
is a sequence $(Y_1', \beta_1 \alpha_1, X_1', \alpha_1 \beta_2, Y_2', \dots, Y_k', \beta_k \alpha_k, X_k')$
such that $Y_i' = (X_i')^{\perp_{\alpha_i \beta_i}}$ and $X_{i-1}' = (Y_i')^{\perp_{\beta_i \alpha_{i-1}}}$ for each $i$.
Note that an outer space-walk $\calP$
coincides with $Y(\calP) \triangleright P$;
recall that $Y(\calP)$ denotes the initial space of $\calP$.
We often use the following lemma.
\begin{lem}\label{lem:diff}
	Let
	$\calP$
	be an outer space-walk with $\alpha := \alpha(\calP)$ and $\beta := \beta(\calP)$,
	and $X \subseteq U_\alpha$ and $Y \subseteq V_\beta$ be nonzero vector subspaces.
	\begin{itemize}
		\item[{\rm (1)}]
		If $X \neq X(\calP)$,
		then
		the vector subspaces at each position in $\calP$ and in $P \triangleleft X$
		are different.
		
		\item[{\rm (2)}]
		In addition,
		if $Y \neq Y(P \triangleleft X)$,
		then
		the vector subspaces at each position in $Y \triangleright P$ and in $P \triangleleft X$ are different,
		and $Y \triangleright P$ forms an outer space-walk.
	\end{itemize}
\end{lem}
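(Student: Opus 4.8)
\textbf{Proof plan for \cref{lem:diff}.}
The plan is to exploit the crucial dimension constraints forced by the structure of an outer walk: along $P$, the $X$-spaces live in the $U_{\alpha}$'s and have dimension at most $1$ (because each $\beta_i\alpha_i$ lies in $E\setminus I$ and the orthogonal-complement operation with respect to a nonzero $2\times2$ matrix $A_{\alpha_i\beta_i}$ cannot return all of $U_{\alpha_i}$ once we know the incoming $Y_i\not\subseteq\kerR(A_{\alpha_i\beta_i})$), while the $Y$-spaces have dimension at least $1$. Since each edge $\alpha_i\beta_{i+1}$ is an \emph{isolated rank-$2$} edge of $I$, the map $Z\mapsto Z^{\perp_{\alpha_i\beta_{i+1}}}$ is a bijection between subspaces of $U_{\alpha_i}$ and subspaces of $V_{\beta_{i+1}}$ that reverses dimension: a $1$-dimensional space goes to a $1$-dimensional space, and $\{0\}$ goes to the full space. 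The same is true at each $\beta_i\alpha_i$ edge read in the forward (front-propagation) direction once nondegeneracy is in force. So orthogonalization against a rank-$2$ block, and against each $E\setminus I$ block in the relevant regime, is \emph{injective} on the set of subspaces that actually arise.

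For part~(1), I would argue by induction walking from $\alpha=\alpha(\calP)$ back toward $\beta=\beta(\calP)$, comparing the space of $\calP$ and the space of $P\triangleleft X$ at each successive position. The base step is the hypothesis $X\neq X(\calP)$ at the vertex $\alpha_k$. For the inductive step, suppose the two spaces differ at some position; applying the relevant orthogonal complement (which is one of the injective maps described above — either $\perp_{\alpha_i\beta_i}$ across a rank-$2$ $I$-edge, or $\perp_{\beta_i\alpha_{i-1}}$ across an $E\setminus I$ edge) to two distinct subspaces yields two distinct subspaces, so they still differ at the next position. The one point requiring care is that the complement operation at an $E\setminus I$ edge is only injective on the subspaces that genuinely occur — concretely, that the space of $P\triangleleft X$ appearing at an $\alpha_i$ is a genuine $1$-dimensional space (or, when it is $\{0\}$, the matching space of $\calP$ is not $\{0\}$), so that one cannot have two different $U_{\alpha_i}$-subspaces mapping to the same $V_{\beta_i}$-subspace. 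This is where I expect the \emph{main obstacle}: one must verify that along the back-propagation $P\triangleleft X$ the intermediate spaces never collapse in a way that breaks injectivity, and this relies on $X\neq\{0\}$ together with the dimension bookkeeping (dimensions only ever being $0$ or $1$ at the $U$-side, and $0$ at a $U$-position forcing the complement to be full, which then cannot coincide with the never-full companion space on the other propagation unless that one is also degenerate — which the hypotheses rule out).

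For part~(2), the additional hypothesis $Y\neq Y(P\triangleleft X)$ serves as the base step for the analogous induction run the other way, from $\beta=\beta(\calP)$ toward $\alpha$, now comparing $Y\triangleright P$ against $P\triangleleft X$; the same injectivity of the orthogonal-complement maps along the walk propagates the inequality through every position. Along the way one also checks that $Y\triangleright P$ is a legitimate outer space-walk: the only conditions to verify are that at each edge $\beta_i\alpha_i$ the incoming $Y$-space is not contained in $\kerR(A_{\alpha_i\beta_i})$ and that the resulting $X$-spaces have dimension $\le 1$. The non-containment at the first edge is immediate from $Y\neq\{0\}$ and $Y\neq Y(P\triangleleft X)$ (noting $Y(P\triangleleft X)=(X_1')^{\perp}$ is exactly the bad subspace $\kerR(A_{\alpha_1\beta_1})$ when $X_1'$ is full, and otherwise has dimension $\le 1$, so a $Y$ of dimension $2$ or a $1$-dimensional $Y$ different from it escapes the kernel); at subsequent edges it follows because the preceding $X$-space is nonzero (again by the running inequality against $P\triangleleft X$, whose corresponding space is not full) so its complement across the rank-$2$ $I$-edge is a proper, hence $\le 1$-dimensional, and in particular the next $Y$ is not the full $\kerR$. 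Assembling these observations gives both claims; the bulk of the writing is just the two parallel inductions plus the small dimension-counting lemma about orthogonal complements against $2\times2$ matrices of rank $1$ and $2$.
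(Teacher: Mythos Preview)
Your proposal is correct and follows essentially the same inductive approach as the paper: propagate the inequality along the walk using the behaviour of $\perp_{\alpha\beta}$ on rank-$2$ and rank-$1$ blocks. The paper's proof is much terser because it tracks the \emph{containment} relation rather than bare inequality: the single invariant ``$X_i' \not\subseteq X_i$ at $\alpha$-positions and $Y_i \not\subseteq Y_i'$ at $\beta$-positions'' is preserved directly by the two facts (i) for rank-$2$ edges, $X' \not\subseteq X \Rightarrow (X')^{\perp_{\alpha\beta}} \not\supseteq X^{\perp_{\alpha\beta}}$, and (ii) for rank-$1$ edges, $X \not\subseteq \kerL(\A) \Rightarrow X^{\perp_{\alpha\beta}} = \kerR(\A)$. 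This sidesteps your ``main obstacle'' entirely: you never need to argue separately that the back-propagated $X_i'$ stay nonzero, nor worry about injectivity failing at rank-$1$ edges, because non-containment (unlike inequality) is automatically pushed through by these two facts together with the standing condition $Y_i \not\subseteq \kerR(A_{\alpha_i\beta_i})$ of an outer space-walk. Your dimension bookkeeping reaches the same conclusion but with more case analysis than necessary.
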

\begin{proof}
	The both immediately follow from the fact that,
	if $\alpha \beta$ is rank-2
	then $X' \not\subseteq X$ implies $(X')^{\perp_{\alpha\beta}} \not\supseteq X^{\perp_{\alpha\beta}}$,
	and
	if $\alpha \beta$ is rank-1
	then $X \not\subseteq \kerL(\A)$ implies $X^{\perp_{\alpha\beta}} = \kerR(\A)$.
\end{proof}

A {\it truncated outer walk} is a walk of the form $P[\alpha)$ for some $\alpha$ belonging to an outer walk $P$.
We will consider the front-/back-propagation for truncated outer walks
in the same way.
The same statement of \cref{lem:diff}~(2) holds for a truncated outer walk $P[\alpha)$
and a vector subspace of $X_{\alpha}$
not including the space of $P \triangleleft X$ at $\alpha$.

Let $R$ be a truncated outer walk with the initial vertex $\alpha(\calP)$.
We denote $\calP \circ \left( X(\calP) \triangleright R \right)$ by
$\calP \triangleright R$ for simplicity.

\subsubsection{Decremental quantity $\theta$}
In order to estimate the time complexity,
we introduce a quantity $\theta$ which decreases during the algorithm.
Let $I$ be a q-matching and $\calT$ an augmenting space-walk for $I$.
The {\it extended support} of $\calT$
is
the union of the underlying walk $T$ of $\calT$ and the rank-1 connected components intersecting with $T$.
Our procedure will be done within the extended support.
An edge $\alpha \beta \in I$ is said to be {\it inner-double}
if $\alpha \beta$ appears twice in the union of inner space-walks of $\calT$.

Let $N(I, \calT)$ be the number of edges in the extended support of $\calT$
and
$D_{\rm inner}(I, \calT)$ the number of inner-double edges in $\calT$.
Define $\theta(I, \calT) := N(I, \calT) + D_{\rm inner}(I, \calT)$.
Clearly $\theta(I, \calT)$ is bounded by $O(|E|)$.

\subsection{Initial stage}\label{subsec:initial stage}
We start to describe the augmentation procedure.
Let $\calT = \calP_0 \circ \calQ_1 \circ \calP_1 \circ \cdots \circ \calQ_m \circ \calP_m$ be an augmenting space-walk for $I$.
We consider the following condition ($\No$):
\begin{description}
	\item[($\No$)]
	Suppose that a vertex $\gamma$ in $\calP_m$ appears twice in $\calT$,
	where the first appearance occurs in $\calP_{\ell}$ with $\ell \leq m$.
	Then the subspace at the first appearance of $\gamma$ in $\calP_{\ell}$
	is equal to the subspace at the last appearance of $\gamma$
	at $P_m \triangleleft \ker_I(\alpha(\calP_m))$.
\end{description}
When $\calT$ violates ($\No$),
we replace $\calT$ by another augmenting space-walk as follows.
Suppose that $\gamma$ appears twice in $\calT$ as in ($\No$), 
but the subspace of $\calP_{\ell}$ at the first $\gamma$
is different from the subspace of $P_m \triangleleft \ker_I(\alpha(\calP_m))$ at the second $\gamma$.
Choose the first such $\gamma$ in $\calP_\ell$.
Note that, if $\gamma \neq \alpha$,
then $\gamma = \beta(\calP_m) = \beta(\calP_{\ell})$.
Update $\calT$ as
\begin{align}\label{eq:T' Nouter}
\calT \leftarrow
\begin{cases}
\calP_0 \circ \calQ_1 \circ \cdots \circ \calQ_{\ell} \circ \calP_{\ell} & \text{if $\gamma = \alpha(\calP_m)$},\\
\calP_0 \circ \calQ_1 \circ \cdots \circ \calQ_{\ell} \circ \left(\calP_{\ell}(\alpha] \triangleright P_m[\alpha)\right) & \text{if $\gamma = \alpha \neq \alpha(\calP_m)$},\\
\calP_0 \circ \calQ_1 \circ \cdots \circ \calQ_{\ell} \circ \left( Y(\calQ_{\ell}) \triangleright P_m\right) & \text{if $\gamma = \beta(\calP_m) = \beta(\calP_{\ell})$}.
\end{cases}
\end{align}
(In fact, the first case never occurs in the procedure, provided the initial augmenting space-walk is obtained in the algorithm in \cref{subsec:finding}.)

Clearly the resulting $\calT$ is a compatibly-concatenated space-walk and
satisfies ($\Ai$).
Consider the case of $\gamma = \alpha \neq \alpha(\calP)$. (The other cases are similar.)
Since the subspace $X$ of $\calP_{\ell}$ at the first $\alpha$ is different from the subspace of $P_m \triangleleft \ker_I(\alpha(\calP_m))$
at $\alpha$,
the last space of $X \triangleright P_m[\alpha)$ is different from $\ker_I(\alpha(\calP_m))$ by \cref{lem:diff}~(2).
This implies that $\calP_{\ell}(\alpha] \triangleright P_m[\alpha)$ is a single outer space-walks for $I$,
and that ($\Al$) holds.
Thus the new $\calT$ is an augmenting space-walk for $I$.

By this update,
$\theta$ strictly decreases.
Indeed,
clearly $D_{\rm inner}$ does not increase.
If $\ell = m$,
then, by the irredundancy and the definition of $\gamma = \alpha$,
the edge in $P_m \setminus I$ incident to the second $\gamma$
exits the extended support of the new $\calT$.
Hence $N$ decreases by at least one.
If $\ell < m$,
then
the irredundancy implies the following:
If $\gamma = \alpha$,
then the last edge of $P_{m}(\alpha]$
exits the extended support of the new $\calT$.
If $\gamma = \beta(\calP_m) = \beta(\calP_{\ell})$,
then the initial edge of $P_\ell$ exits.
Hence $N$ decreases by at least one.
Checking ($\No$) and the update~\eqref{eq:T' Nouter}
can be done in $O(|E|)$.

Now $\calT$ satisfies ($\No$).
We let $\calT = \calP_0 \circ \calQ_1 \circ \calP_1 \circ \cdots \circ \calQ_m \circ \calP_m$ again
by re-index.
An outer space-walk is said to be {\it simple}
if its underlying walk is actually a path,
i.e., does not use the same edge twice.
There are three cases:
\begin{itemize}
	\item $m = 0$ and $\calP_0$ is simple.
	\item $m \geq 1$ and $\calP_m$ is simple.
	\item $\calP_m$ is not simple.
\end{itemize}
In the first case,
we obtain a matching $I^*$ with $r(I^*) > r(I)$ in $O(|E|)$ as required
and the augmentation procedure terminates,
which is dealt with in \cref{subsec:base}.
In the second and third cases,
we basically modify $(I, \calT)$ in $O(|E|)$ time so that $\theta$ strictly decreases,
and then return to the initial stage.
In a certain situation of the second case,
we obtain a larger matching in $O(|E|)$ as required
and terminate the procedure.
They are dealt with in \cref{subsec:not simple,subsec:path},
respectively.
Since $\theta$ is bounded by $O(|E|)$ here,
the time complexity of the augmentation procedure is bounded by $O(|E|^2)$.
This implies \cref{thm:augmentation}.

\subsection{Base case: $\calT = \calP_0$ and $\calP_0$ is simple}\label{subsec:base}
Define
\begin{align*}
I^* := I \cup P_0,
\end{align*}
where $P_0$ is regarded as an edge set.
Then the following holds.
\begin{lem}\label{lem:I^*}
	$I^*$ satisfies {\rm (Deg)}, {\rm (q-Cycle)}, {\rm (VL)}, and $r(I^*) > r(I)$.
\end{lem}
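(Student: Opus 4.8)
The plan is to verify the four conditions for $I^*=I\cup P_0$ directly, exploiting the fact that $\calP_0$ is a simple outer space-walk satisfying $(\Ai)$ and $(\Al)$. First I would examine $(\textrm{Deg})$: the outer walk underlying $\calP_0$ alternates between edges $\beta_i\alpha_i\in E\setminus I$ and isolated rank-2 edges $\alpha_i\beta_{i+1}\in I$; since $\calP_0$ is simple, no edge is repeated, and the only new incidences added to the graph $I$ come from the $\beta_i\alpha_i$ edges. Each such edge attaches to an $\alpha_i$ that previously had $\deg_I(\alpha_i)\le 1$ (it was an endpoint of the isolated rank-2 edge, hence degree exactly $1$, or an endpoint of $P_0$ with degree $0$) and to a $\beta_i$ similarly controlled by $(\Ai)$ at the initial end and by isolatedness elsewhere. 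I would argue vertex-by-vertex along the walk that no degree exceeds $2$, treating the two endpoints $\beta(\calP_0)$ and $\alpha(\calP_0)$ separately using $\ker_I(\beta(\calP_0))\neq\{0\}$ and $\ker_I(\alpha(\calP_0))\neq\{0\}$, which force those vertices to have degree $0$ or to be incident only to a rank-1 edge in $I$.

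Next, for $(\textrm{Path})$ and $(\textrm{q-Cycle})$, I would analyze which connected components of $I$ get merged by adding $P_0$. Because the interior edges of $P_0$ in $I$ are isolated rank-2 edges, adding $P_0$ threads these isolated edges together with the new $E\setminus I$-edges into a single path (or glues a rank-1 component at one or both ends if an endpoint of $P_0$ lies in a rank-1 component via $\ker_I$). The resulting new component is either a path whose end edges come from the original rank-1 components (hence rank-1, giving $(\textrm{Path})$), or, if both ends of $P_0$ land in the same rank-1 component, a cycle; in that case the cycle still contains the rank-1 edge(s) of that component, so $(\textrm{q-Cycle})$ holds. The key point is that $(\Ai)$ and $(\Al)$ guarantee the endpoints of $P_0$ are attached to $I$ in the ``right'' way — either free or at a rank-1 edge — so no rank-2 end edge is created.

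For $(\textrm{VL})$, I would construct a valid labeling of $I^*$ from a valid labeling of $I$ by using the subspaces appearing in $\calP_0$. Along the outer space-walk, each $X_i={Y_i}^{\perp_{\alpha_i\beta_i}}$ is at most $1$-dimensional and $Y_i\not\subseteq\kerR(A_{\alpha_i\beta_i})$, and the orthogonality relations $Y_{i+1}=X_i^{\perp_{\alpha_i\beta_{i+1}}}$ are exactly the relations $(\ref{eq:+-})$ needs for the new edges; I would assign $U_{\alpha_i}^{\pm},V_{\beta_i}^{\pm}$ so that one label at $\alpha_i$ is $X_i$ and one at $\beta_i$ is $Y_i$ (extending the labeling already forced on the glued rank-1 components), and check that the two labels forced at each degree-$2$ vertex of $I^*$ are distinct — this is where $(\Al)$, in the form $X(\calP_0)\neq\ker_I(\alpha(\calP_0))$, and the non-degeneracy conditions $Y_i\not\subseteq\kerR(A_{\alpha_i\beta_i})$ are used. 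Finally $r(I^*)>r(I)$: adding $P_0$ removes $k$ isolated rank-2 edges from the ``isolated rank-2'' count while adding $2k+1$ edges to $|I|$ roughly (with small corrections at the ends depending on whether endpoints were free or in rank-1 components), so $r(I^*)-r(I)$ works out to a strictly positive integer; I would just tally $|I^*|-|I|$ against the change in the number of isolated rank-2 edges. The main obstacle I anticipate is the $(\textrm{VL})$ verification at the two ends of $P_0$: one must carefully match the subspaces dictated by $\calP_0$ with the uniquely-forced labels propagating in from the attached rank-1 components, and confirm that $(\Ai)$/$(\Al)$ preclude a degenerate (coinciding) pair of labels there; the interior of the walk is routine once the orthogonality bookkeeping is set up.
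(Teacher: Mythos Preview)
Your overall strategy matches the paper's, and the arguments for (Deg), (q-Cycle), and $r(I^*)>r(I)$ are fine. However, there are two concrete issues in the (VL) part.

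First, a minor overclaim: you assert that the new component satisfies (Path) because $(\Ai)$/$(\Al)$ force the endpoints to be ``free or at a rank-1 edge.'' But if $\deg_I(\alpha(\calP_0))=0$ (which $(\Al)$ permits) and the last edge $\beta_k\alpha_k$ is rank-$2$, then $I^*$ has a rank-$2$ end edge and (Path) fails. This does not hurt the lemma, which only claims (q-Cycle); the paper handles the possible (Path) violation afterward by elimination. Just drop the (Path) discussion.

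Second, and more substantively, your construction of the valid labeling is incomplete in two respects. (i) You propose to take $X_i$ and $Y_i$ as one of the two labels, but these need not be $1$-dimensional: if $\deg_I(\beta_1)=0$ then $Y_1=V_{\beta_1}$ is $2$-dimensional, and as long as the successive $\beta_j\alpha_j$ are rank-$2$ you get $X_j=\{0\}$ and $Y_{j+1}=V_{\beta_{j+1}}$. The paper treats this case separately, choosing $V_{\beta_i}^-$ arbitrarily at the first index where the propagation becomes $1$-dimensional and then propagating back. (ii) You are vague about how the \emph{other} label at interior vertices is produced; ``extending the labeling forced from the glued rank-1 components'' does not suffice when both endpoints of $P_0$ have degree $0$ in $I$. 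The paper's device is to fix $U_{\alpha_k}^-$ (either as $\ker_I(\alpha_k)$ or any line $\neq X_k$) and run the back-propagation $P_0\triangleleft U_{\alpha_k}^-$ to define all the $V_{\beta_i}^+$, $U_{\alpha_i}^-$; distinctness from $X_i$, $Y_i$ then follows from \cref{lem:diff}. You should incorporate this back-propagation step explicitly.
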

\begin{proof}
	Suppose $\calP_0 = (Y_{1}, \beta_1\alpha_1, X_{1}, \alpha_1 \beta_2,\ldots, \beta_k \alpha_k,  X_k)$ ($k \geq 1$).
	One can easily check that $I^*$ satisfies (Deg), (q-Cycle), and $r(I^*) > r(I)$;
	(Deg) follows from $\deg_I(\gamma) \leq 1$ for all $\gamma$ belonging to $P_0$.
	We show that $I^*$ satisfies (VL),
	i.e., $I^*$ has a valid labeling.
	We can assume that, if $\deg_I(\alpha_k) = 1$ then $\alpha_k$ is incident 
	to a $-$-edge in $I$,
	i.e., $\beta_1 \alpha_1, \beta_2 \alpha_2, \dots, \beta_k \alpha_k$ are $+$-edges in $I'$.
	If $\deg_I(\alpha_k) = 1$,
	then define $U_{\alpha_k}^-$ as $\ker_I(\alpha_k)$.
	If $\deg_I(\alpha_k) = 0$,
	then define $U_{\alpha_k}^-$ as any 1-dimensional subspace of $U_{\alpha_k}$ different from $X_k$.
	Consider the back-propagation $P_0 \triangleleft U_{\alpha_k}^-$.
	Define $V^+_{\beta_{i}}$ as the propagated space at $\beta_i$
	and $U^-_{\alpha_{i}}$ as the propagated space at $\alpha_i$
	for all $i$.
	Note that the above propagated spaces are 1-dimensional
	and that $V_{\beta_i}^+ = \kerR(A_{\alpha_i \beta_i})$ if $\beta_i \alpha_i$ is rank-1.
	
	The other labeling $(U^+_{\alpha_i}, V^-_{\beta_i})$ 
	are defined as $(X_i,Y_i)$ if both $X_i$ and $Y_i$ are both 1-dimensional.
	In this case,
	by \cref{lem:diff}~(1),
	we have $U^+_{\alpha_i} \neq U^-_{\alpha_i}$ and $V^+_{\beta_i} \neq V^-_{\beta_i}$ for each $i$.
	If some $Y_i$ is 2-dimensional (equivalently $X_{i-1}$ is zero-dimensional),
	then 
	$\deg_I(\beta_1) = 0$ and $\beta_j \alpha_j$ is rank-2 for $j=1,2,\cdots,i-1$.
	In this case, consider the maximum index $i$ with this property
	and define $V_{\beta_i}^-$ as any 1-dimensional subspace different from $\kerR A_{\alpha_i \beta_i} = V_{\beta_i}^+$.
	Then
	define $U_{\alpha_{j-1}}^+ := (V_{\beta_j}^-)^{\perp_{\beta_j \alpha_{j-1}}}$
	and $V_{\beta_{j-1}}^- := (U_{\alpha_{j-1}}^+)^{\perp_{\alpha_{j-1} \beta_j}}$
	for each $j$.
	Note $U_{\alpha_i}^+ = \kerL(A_{\alpha_i \beta_i})$ if $\beta_i \alpha_i$ is rank-1.
	By a similar argument as \cref{lem:diff}~(1),
	we have $U^+_{\alpha_i} \neq U^-_{\alpha_i}$ and $V^+_{\beta_i} \neq V^-_{\beta_i}$.

	The resulting labeling is valid for $I^*$.
	Indeed, we have already seen $U^+_{\alpha_i} \neq U^-_{\alpha_i}$ and $V^+_{\beta_i} \neq V^-_{\beta_i}$.
	The orthogonal property~\eqref{eq:+-} is satisfied by the construction.
	As seen above,
	we have $(U_{\alpha_i}^+, V_{\beta_i}^+) = (\kerL(A_{\alpha_i \beta_i}), \kerR(A_{\alpha_i \beta_i}))$
	if $\beta_i \alpha_i$ is rank-1,
	which implies~\eqref{eq:++ --}.
	We need to consider the case where $\beta_1$ already belongs to a path component $C$.
	Say, $\beta_1$ is incident to $\beta_1 \alpha$ (with $\alpha \neq \alpha_1$) that is an end edge of $C$,
	then the validness requires that $\beta_1 \alpha$ is a $-$-edge in $I^*$.
	If $C$ contains $\alpha_k$, then $\beta_1 \alpha$ is necessarily a (rank-1) $-$-edge (with $Y_{\beta_1} = \kerR(A_{\alpha \beta_1})$).
	Otherwise we can assume by re-coloring that $\beta_1 \alpha$ is a $-$-edge.   
	Thus we obtain a valid labeling for $I^*$.
\end{proof}
We apply the elimination to $I^*$.
By \cref{lem:elimination},
the resulting set is a desired augmentation.
The augmentation procedure terminates.
This update
can be clearly done in $O(|E|)$ time.

\subsection{$\calP_m$ is simple and $m \geq 1$}\label{subsec:path}
Suppose that $m \geq 1$ and $\calP_m$
is simple.
We consider the following condition:
\begin{description}
	\item[($\Ni$)]
	For any $\ell \leq m-2$,
	there is no inner space-walk $\calQ$ such that
	it admits a compatible concatenation
	$\calP_{\ell} \circ \calQ$ with
	$\beta(\calQ) = \beta(\calP_m) = \beta(\calQ_m)$,
	and $Y(\calQ)$ is different from
	the initial space of $P_m \triangleleft \ker_I(\alpha(\calP_m))$.
\end{description}
Note that ($\Ni$) clearly holds if $m = 1$.

Suppose that ($\Ni$) is violated for some $\ell \leq m-2$.
Consider minimum such $\ell$.
Update $\calT$ as
\begin{align}\label{eq:T' Ninner}
\calT \leftarrow \calP_0 \circ \calQ_1 \circ \cdots \circ \calP_{\ell} \circ \calQ \circ (Y(\calQ) \triangleright P_m).
\end{align}
The resulting $\calT$ is a compatibly-concatenated space-walk for $I$.
($\Ai$) clearly holds.
Since $Y(\calQ)$ is different from the initial space of $P_m \triangleleft \ker_I(\alpha(\calP_m))$,
we have
($\Al$) by \cref{lem:diff}~(2).
Thus the resulting $\calT$ is an augmenting space-walk for $I$.

This update can be done in $O(|E|)$ time,
since it suffices to find $\calP_\ell$ and $\calQ$ violating ($\Ni$)
and compute the front-propagation.
Note that the last outer space-walk remains simple.

By re-index,
we let $\calT = \calP_0 \circ \calQ_1 \circ \calP_1 \circ \cdots \circ \calQ_m \circ \calP_m$ again.
(Note that the new $m$ can be smaller than the old $m$.)
Here $\theta$ can increase.
Let $\Delta$ denote the increase of $\theta$ in the modification~\eqref{eq:T' Ninner},
where $\Delta := 0$
if $\theta$ does not increase.
Then the following holds:
\begin{lem}\label{lem:Delta}
	$\Delta \leq |Q_m| - 1$.
\end{lem}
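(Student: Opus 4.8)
The plan is to bound separately the changes to the two summands of $\theta = N + D_{\rm inner}$ caused by the replacement \eqref{eq:T' Ninner}. Write $\calT_{\rm new}$ for the updated augmenting space-walk: it is obtained from $\calT$ by deleting the pieces $\calQ_{\ell+1}, \calP_{\ell+1}, \dots, \calQ_m$, inserting the single inner space-walk $\calQ$, and keeping the underlying walk $P_m$ of the final outer piece. Thus the inner space-walks of $\calT_{\rm new}$ are $\calQ_1, \dots, \calQ_\ell, \calQ$, and its underlying walk is $P_0 \cup \cdots \cup P_\ell \cup Q \cup P_m$ as an edge set. Since $\theta$ increasing forces $\Delta$ to equal the total increase, and $\Delta = 0 \le |Q_m|-1$ otherwise, it suffices to show that the increases of $N$ and of $D_{\rm inner}$ add up to at most $|Q_m|-1$.

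The first step is to check that $N$ does not increase. The pieces $P_0, \dots, P_\ell, P_m$ lie on the old underlying walk, hence in the extended support of $\calT$. The new inner walk $Q$ lies inside a single rank-1 connected component of $I$; since $\beta(\calQ) = \beta(\calQ_m)$ lies in the component $C$ that already contains $Q_m$, that component is $C$, and $C$ was already part of the extended support of $\calT$ because it met $Q_m$. Hence the underlying walk of $\calT_{\rm new}$, and every rank-1 component meeting it, were already recorded in the extended support of $\calT$; so $N(\calT_{\rm new}) \le N(\calT)$, and it remains to bound the increase of $D_{\rm inner}$ by $|Q_m|-1$.

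For $D_{\rm inner}$, I would compare, as multisets, the edges used by the inner space-walks before and after. Since both $\calT$ and $\calT_{\rm new}$ are irredundant, each edge occurs at most twice, so $D_{\rm inner}$ counts the edges of multiplicity exactly two. The update removes the occurrences coming from $Q_{\ell+1}, \dots, Q_m$ and adds those coming from $Q$; hence a newly doubled edge must lie on $Q$, and an edge lying on both $Q$ and $Q_m$ keeps its multiplicity (one occurrence removed with $Q_m$, one added with $Q$) and so is never newly doubled. Therefore the newly doubled edges all lie on $Q$ but off $Q_m$. To convert this into the bound $|Q_m|-1$ I would use that $Q$ and $Q_m$ are inner walks of the same rank-1 component $C$ terminating at the common vertex $\beta(\calQ_m)$: since $C$ is a path or a cycle and (by irredundancy of $\calT_{\rm new}$) $\calQ$ cannot wind all the way around a cycle, the part $R$ of $Q$ disjoint from $Q_m$ is a sub-walk of $C$; each edge of $R$ that becomes newly doubled already occurs on some $\calQ_i$ with $i \le \ell$, and following that occurrence together with the passages of $\calT$ through $C$ lets one charge it to a distinct edge of $Q_m$ whose second occurrence is destroyed by the update, while the edge of $Q_m$ incident to $\beta(\calQ_m)$ (which is shared with $Q$) is never charged — giving at most $|Q_m|-1$ newly doubled edges net. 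The main obstacle is precisely this charging argument: one must pin down how far $\calQ$ may run past $\calQ_m$ inside $C$ while $\calT_{\rm new}$ stays irredundant, and match up the destroyed double-occurrences edge by edge, with a case distinction according to whether $C$ is a path or a cycle and to how $Q$ and $Q_m$ approach $\beta(\calQ_m)$ (where the directions of the two walks are constrained by the compatibility conditions and by \cref{lem:diff}). Everything else is routine bookkeeping that costs $O(|E|)$ and does not affect the stated bound.
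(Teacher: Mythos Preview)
Your first step (that $N$ does not increase) is correct and matches the paper. The trouble is entirely in the second step.

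First, a confusion about the statement: after the re-indexing that precedes \cref{lem:Delta}, the symbol $Q_m$ denotes the \emph{new} last inner walk, i.e.\ your $Q$. Your charging argument, however, aims at a bound of the form ``at most $|Q_m^{\rm old}|-1$ newly doubled edges,'' which is the wrong quantity. What must be shown is $\Delta\le |Q|-1$.

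Second, the assertion that ``the edge of $Q_m$ incident to $\beta(\calQ_m)$ is shared with $Q$'' need not hold. Both $\calQ$ and the old $\calQ_m$ end at $\beta^*$, but when $\deg_C(\beta^*)=2$ they may arrive along different edges (with opposite signs). Concretely, if the back-propagated space $Y^\vee$ at $\beta^*$ equals neither $V_{\beta^*}^+$ nor $V_{\beta^*}^-$, then $Y(\calQ)$ is only constrained to avoid $Y^\vee$ and may differ from $Y(\calQ_m^{\rm old})$; the two walks can then be edge-disjoint, so your ``shared last edge'' shortcut fails and every edge of $Q$ could in principle become newly doubled.

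Third, and most importantly, your proposed charging (follow a newly doubled edge along some earlier $\calQ_i$ to find a destroyed double inside the old $Q_m$) is left as a sketch, and there is no mechanism offered to make the map injective or even well-defined; you yourself flag this as the ``main obstacle.'' Without it the argument is not a proof.

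The paper's route avoids all of this by using the condition~($\Ni$), which the update~\eqref{eq:T' Ninner} was chosen (via the minimal $\ell$) to enforce on the new $\calT$. One shows $\Delta\le |F'|-|F|$, where $F'$ (resp.\ $F$) is the set of inner-double edges lying on the new (resp.\ old) $Q_m$; since $|F'|\le |Q|$, it suffices to rule out $F'=Q$ with $F=\emptyset$. If $F'=Q$, some earlier $\calQ_{j}$ in the new $\calT$ contains $Q$; by~($\Ni$) its initial edge must carry the opposite sign to the initial edge of $\calQ$, which forces $\calQ_{j}$ to pass through both edges at $\beta^*$ and hence to overlap the old $Q_m$, giving $|F|\ge 1$. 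The use of~($\Ni$) is the missing idea in your attempt.
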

\begin{proof}
	It suffices to consider the case of $\Delta > 0$.
	Obviously, the update~\eqref{eq:T' Ninner} does not increase
	$N$.
	Let $F$ and $F'$ denote the set of inner-double edges in the old $Q_m$
	and the new $Q_m$,
	respectively.
	Then $\Delta \leq |F'| - |F|$ holds.
	Hence it suffices to prove $|F| > 0$ if $F'$ coincides with the new $Q_m$.
	We can assume that the initial edge of the new $\calQ_m$ is a $+$-edge.
	The equality $F' = Q_m$
	implies that
	there exists an inner space-walk $\calQ_{\ell}$ with $\ell < m$ in $\calT$ including $\calQ_m$.
	Then the initial edge of $\calQ_{\ell}$ must be a $-$-edge;
	otherwise $\calT$ violates ($\Ni$), a contradiction.
	Since $\calQ_{\ell}$ contains both the $+$-edge and the $-$-edge incident to $\beta(\calP_m)$,
	$\calQ_{\ell}$ intersects with the old $\calQ_m$.
	Hence $|F| > 0$,
	as required.
\end{proof}

Now $\calT$ satisfies ($\Ni$).
Let $C$ be the rank-1 connected component containing $Q_m$,
and $\beta^*$ denote the last vertex $\beta(\calQ_m)$ of $\calQ_m$.
We can assume that the last edge of $\calQ_m$ is a $+$-edge,
which implies $Y(\calQ_m) = V_{\beta(\calQ_m)}^-$.
Let $Q^+$ and $Q^-$ be the maximal inner walks in $C$ such that the last vertex is $\beta^*$ and the last edge is a $+$-edge and a $-$-edge,
respectively.
We denote by $\alpha^+$ and $\alpha^-$ the initial vertices of $Q^+$ and of $Q^-$,
respectively.
See e.g., Figure~\ref{fig:cycle}.
Let $Y^\vee$ be the initial space of $P_m \triangleleft \ker_I(\alpha(\calP_m))$.
Note that $V_{\beta^*}^-$ is different from $Y^\vee$.
Consider the following two cases:
\begin{itemize}
	\item $C$ is a cycle component.
	\item $C$ is a path component.
\end{itemize}
They are dealt with in \cref{subsubsec:cycle,subsubsec:path},
respectively.

\subsubsection{$C$ is a cycle component}\label{subsubsec:cycle}
There are additional two cases:
\begin{itemize}
	\item
	$C$ has a rank-1 $-$-edge.
	\item
	$C$ has no rank-1 $-$-edge.
\end{itemize}

\paragraph{Case 1: $C$ has a rank-1 $-$-edge.}
In this case,
it holds that $\alpha^+$ is incident to a rank-1 $-$-edge
which does not belong to $Q^+$.
Define
\begin{align}\label{eq:I' cycle case 1}
I' := (I \cup P_m) \setminus \{ \text{all $+$-edges in $Q^+$} \}.
\end{align}
\begin{lem}\label{lem:I' cycle case 1}
	$I'$ satisfies {\rm (Deg)}, {\rm (q-Cycle)}, {\rm (VL)}, and $r(I') = r(I)$.
\end{lem}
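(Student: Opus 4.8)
The plan is to verify the four properties of $I'$ in turn, exploiting the structure of the update~\eqref{eq:I' cycle case 1}: we delete all $+$-edges of the maximal inner walk $Q^+$ ending at $\beta^*$ with a $+$-edge, and add the simple outer path $P_m$. First I would track degrees. Along $P_m$, every internal vertex has an isolated rank-2 edge of $I$ incident to it and a non-$I$ edge, so $\deg_I$ is $0$ or $1$ there; adding $P_m$ keeps degrees $\le 2$, except possibly at the endpoints $\beta(\calP_m)=\beta^*$ and $\alpha(\calP_m)$. At $\beta^*$ we have $\deg_I(\beta^*)=2$ (it lies inside the cycle $C$), but removing the $+$-edge of $Q^+$ incident to $\beta^*$ drops this to $1$, so after adding the $P_m$-edge at $\beta^*$ the degree is again $2$. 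At $\alpha^+$ (the other end of $Q^+$) a $+$-edge of $Q^+$ is removed; since $\alpha^+$ is incident to a rank-1 $-$-edge outside $Q^+$ (this is exactly the observation preceding the lemma, using Case~1 and (Cycle) for $C$), that $-$-edge survives, so $\alpha^+$ keeps degree at least $1$. At $\alpha(\calP_m)$, the condition $(\Al)$, i.e. $\{0\}\ne\ker_I(\alpha(\calP_m))\ne X(\calP_m)$, together with the definition~\eqref{eq:kera} of $\ker_I$, forces $\deg_I(\alpha(\calP_m))\le 1$, so adding the last $P_m$-edge keeps degree $\le 2$. Hence (Deg) holds, and each component of $I'$ is a path or cycle.

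Next, (q-Cycle): I would argue that $I'$ creates no cycle that lacks a rank-1 edge. A new cycle in $I'$ must use edges of $P_m$; since $P_m$'s first and last edges are rank-2 and only the two endpoints $\beta^*,\alpha(\calP_m)$ can have degree $2$ in $I'\setminus P_m$, any new cycle runs through $C$. Inside $C$, after deleting the $+$-edges of $Q^+$ we keep the rank-1 $-$-edge at $\alpha^+$ (and in Case~1 the rank-1 $-$-edge guaranteed by (Cycle)), so any surviving or newly-formed cycle through $C$ still contains a rank-1 edge. The count $r(I')=r(I)$ is then a direct bookkeeping step: $|P_m|$ edges are added and $|Q^+|$ $+$-edges are removed, but $|P_m|$ consists of an alternation of non-$I$ edges and isolated rank-2 edges of $I$, and each such isolated rank-2 edge $\alpha_i\beta_{i+1}$ of $I$ loses its isolated status in $I'$ (it acquires the two $P_m$-edges at its ends), so the gain in $|I'|$ is offset by the loss of isolated-rank-2 edges, while $|Q^+|$ removed edges and the corresponding structural changes balance out; I would make this precise by writing $r(I')-r(I)$ as (edges added) $-$ (edges removed) $+$ (change in number of isolated rank-2 edges) and checking it is $0$.

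The main obstacle is (VL): producing a valid labeling for $I'$. The natural approach, as in \cref{lem:I^*}, is to back-propagate a suitable label along $P_m$ starting from $\alpha(\calP_m)$, using $\ker_I(\alpha(\calP_m))$ (or a generic line) as the seed; condition $(\Al)$ and \cref{lem:diff}~(1),(2) will guarantee that the propagated $+$-labels differ from the $-$-labels coming from the outer-space-walk structure of $\calP_m$ (whose labels are $X_i=Y_i^{\perp}$, etc.). The delicate point is the interface at $\beta^*$, where the new $P_m$-edge meets the surviving part of $C$: the labels of the vertices of $C$ that remain (those on $Q^-$ and beyond, and whatever is left after deleting $Q^+$) are already fixed by the valid labeling of $I$, and one must check that the label forced at $\beta^*$ from the $C$-side is consistent with (i.e. distinct from, in the right slots) the label arriving from $P_m$. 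Here the compatibility conditions built into the augmenting space-walk $\calT$ — specifically that $Y(\calQ_m)=V_{\beta^*}^-$ and that $Y^\vee$, the initial space of $P_m\triangleleft\ker_I(\alpha(\calP_m))$, is different from $V_{\beta^*}^-$ (noted just before the case split) — are exactly what makes the two labels at $\beta^*$ distinct, so the merged labeling assigns two distinct lines there. I would also need to re-color the affected components consistently (as at the end of \cref{lem:I^*}) and check~\eqref{eq:+-} and~\eqref{eq:++ --} edge by edge along $P_m$ and at $\alpha^+$ (where $U_{\alpha^+}^{\pm}$ must match the kernels of the surviving rank-1 $-$-edge). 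I expect this labeling-merge at $\beta^*$ and $\alpha^+$ to be the only genuinely non-routine part; everything else is degree counting and an $r$-invariance computation.
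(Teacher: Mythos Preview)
Your treatment of (Deg), (q-Cycle), and $r(I')=r(I)$ is along the right lines and matches the paper's (terse) verification. The gap is in your construction of the valid labeling.

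You write that ``the labels of the vertices of $C$ that remain (those on $Q^-$ and beyond \ldots) are already fixed by the valid labeling of $I$,'' and then only check that the two lines at $\beta^*$ are distinct. But distinctness is not the only constraint at $\beta^*$. The surviving $-$-edge $\beta^*\alpha_1$ of $Q^-$ imposes the orthogonality $A_{\alpha_1\beta^*}(U_{\alpha_1}^{-},V_{\beta^*}^{+})=\{0\}$ from~\eqref{eq:+-}. Your new $V_{\beta^*}^{+}$ is the back-propagated space $Y^\vee$ along $P_m$, and in general $Y^\vee$ differs from the \emph{old} $V_{\beta^*}^{+}$ (which was determined by the now-deleted $+$-edge of $Q^+$). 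If $\beta^*\alpha_1$ is rank-$2$, the old $U_{\alpha_1}^{-}=(V_{\beta^*}^{+})^{\perp_{\alpha_1\beta^*}}$ no longer satisfies this orthogonality once $V_{\beta^*}^{+}$ changes. So keeping the old labels on $Q^-$ produces an invalid labeling.

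The paper repairs this by re-propagating: starting from the new $V_{\beta^*}^{+}=Y^\vee$, it walks along the maximal rank-$2$ prefix $(\beta^*=\beta_1\alpha_1,\alpha_1\beta_2,\dots)$ of $Q^-$ and redefines $U_{\alpha_i}^{-}:=(V_{\beta_i}^{+})^{\perp_{\beta_i\alpha_i}}$ and $V_{\beta_{i-1}}^{+}:=(U_{\alpha_i}^{-})^{\perp_{\alpha_i\beta_{i-1}}}$, while leaving $U_{\alpha_i}^{+}$ and $V_{\beta_i}^{-}$ untouched. The propagation stops at the first rank-$1$ edge, where the kernel conditions~\eqref{eq:++ --} anchor the labels regardless. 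One then has to check $U_{\alpha_i}^{+}\neq U_{\alpha_i}^{-}$ and $V_{\beta_i}^{+}\neq V_{\beta_i}^{-}$ along this prefix; this follows from $Y^\vee\neq V_{\beta^*}^{-}$ (which you did observe) together with \cref{lem:diff}~(1), since the unchanged labels $U_{\alpha_i}^{+},V_{\beta_i}^{-}$ are themselves an orthogonal chain along the same rank-$2$ edges. Your interface check at $\beta^*$ is thus the first step of the right argument, but the relabeling has to be pushed into $Q^-$ before (VL) actually holds.
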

\begin{proof}
	One can easily check that $I'$ satisfies (Deg), (q-Cycle), and $r(I') = r(I)$.
	We show that $I'$ also satisfies (VL).
	We can assume that $\alpha(\calP_m)$ is incident to a $-$-edge in $I$,
	since $\alpha(\calP_m)$ does not belong to $C$.
	The labels on the vertices in $P_m$
	are determined in the same way as in the proof of \cref{lem:I^*}.
	
	Let $(\beta^* = \beta_1 \alpha_1, \alpha_1 \beta_2, \dots)$ be the maximal path in $C$ consisting of rank-2 edges
	such that the initial vertex is $\beta^*$ and the initial edge is a $-$-edge.
	Then we define $U_{\alpha_i}^- := (V_{\beta_i}^+)^{\perp_{\beta_i \alpha_i}}$
	and $V_{\beta_{i-1}}^+ := (U_{\alpha_i}^-)^{\perp_{\alpha_i \beta_{i-1}}}$
	for each $i$.
	On the other hand,
	we do not change
	$U_{\alpha_i}^+$ and $V_{\beta_i}^-$.
	Then this labeling is valid for $I'$.
	Indeed, by ($\Al$) and \cref{lem:diff}~(1),
	we have $V_{\beta}^- \neq V_{\beta}^+$.
	Hence $U_{\alpha_i}^+ \neq U_{\alpha_i}^-$ and $V_{\beta_i}^+ \neq V_{\beta_i}^-$ hold for each $i$.
\end{proof}

If $I'$ satisfies (Path),
then
$I'$ is a q-matching with $r(I') = r(I)$.
Otherwise
we apply the elimination operation to $I'$ from $\alpha(\calP_m)$
so that (Path) holds.
The resulting set, also denoted by $I'$, is a q-mathcing with $r(I') \geq r(I)$ (in fact $r(I') = r(I)$) by \cref{lem:elimination}.

We then modify $\calT$ to obtain an augmenting space-walk $\calT'$ for $I'$.
Define
\begin{align}\label{eq:T' cycle case 1}
\calT' := \calP_0 \circ \calQ_1 \circ \cdots \circ \calQ_{m-1} \circ \left(\calP_{m-1} \triangleright Q^+[\alpha(\calQ_m), \alpha^+]\right),
\end{align}
where $Q^+[\alpha(\calQ_m), \alpha^+]$ forms a truncated outer walk
since all $-$-edges in $Q^+$
are isolated rank-2 edges in $I'$.
See Figure~\ref{fig:cycle}.
\begin{figure}
	\centering
	\begin{tikzpicture}[
	node/.style={
		fill=black, circle, minimum height=5pt, inner sep=0pt,
	},
	I1/.style={
		line width = 3pt,
		decorate,
		decoration={snake, amplitude=.4mm,segment length=2.5mm,post length=0mm}
	},
	I2/.style={
		line width = 3pt
	},
	ID1/.style={
		line width = 3pt,
		blue,
		decorate,
		decoration={snake, amplitude=.3mm,segment length=2mm,post length=0mm}
	},
	ID2/.style={
		line width = 3pt,
		blue
	},
	NI/.style={
	},
	NIA2/.style={
		red,
		semithick
	},
	NIA1/.style={
		red,
		semithick,
		decorate,
		decoration={snake, amplitude=.3mm,segment length=1.5mm,post length=0mm}
	}
	]
	
	\def\mu{a1, a2, a3, a4, a5}
	\def\nu{b1, b2, b3, b4, b5, b6}
	\def\bmu{ba-2, ba-1, ba0, ba1, ba2, ba3, ba4, ba5}
	\def\bnu{bb-2, bb-1, bb0, bb1, bb2, bb3, bb4, bb5, bb6}
	\def\size{1.7cm}
	\def\hight{4cm}
	\def\side{5pt}

	\nodecounter{\bnu}
	\coordinate (pos);
	\foreach \currentnode in \bnu {
		\node[node, below=0 of pos, anchor=center] (\currentnode) {};
		\coordinate (pos) at ($(pos)+(-\size, 0)$);
	}
	\nodecounter{\bmu}
	\coordinate (pos) at ($(bb-2) + (-\size, -\size)$);
	\foreach \currentnode in \bmu {
		\node[node, below=0 of pos, anchor=center] (\currentnode) {};
		\coordinate (pos) at ($(pos)+(-\size, 0)$);
	}
	
	\nodecounter{\nu}
	\coordinate (pos) at ($(bb1) + (0, \hight)$);
	\foreach \currentnode in \nu {
		\node[node, below=0 of pos, anchor=center] (\currentnode) {};
		\coordinate (pos) at ($(pos)+(-\size, 0)$);
	}
	\nodecounter{\mu}
	\coordinate (pos) at ($(b1) + (-\size, -\size)$);
	\foreach \currentnode in \mu {
		\node[node, below=0 of pos, anchor=center] (\currentnode) {};
		\coordinate (pos) at ($(pos)+(-\size, 0)$);
	}

	\draw[I1] (bb-2) -- (ba-2);
	\draw[I1] (bb1) -- node[left = -1pt]{$-$} (ba0);
	\draw[I1] (b6) -- node[left]{$-$} (a5);
	\draw[I1] (bb6) -- node[left]{$-$} (ba5);
	
	\foreach \i / \j in {bb0/ba-1, bb2/ba1, bb3/ba2, bb5/ba4, b2/a1, b3/a2, b4/a3, b5/a4} {
		\draw[I2] (\i) -- (\j);
	}
	\draw[I2] (bb4) -- node[right = -2pt]{$-$} (ba3);
	\draw[I2] (bb-1) -- node[right = -2pt]{$-$} (ba-2);
	\draw[I2] (bb-1) -- node[above left = -3pt and -3pt]{$+$} (ba-1);
	\draw[I2] (bb0) -- node[below]{$+$} (ba0);
	
	\foreach \i / \j in {b2/a2, b3/a3} {
		\draw[NIA2] (\i) -- (\j);
	}
	\draw[NIA2] (b1) -- node[above]{$+$} (a1);
	\draw[NIA2] (b5) -- node[above]{$+$} (a5);
	
	\draw[NIA1] (b4) -- (a4);
	
	\foreach \i / \j in {bb2/ba2, bb5/ba5} {
		\draw[ID2] (\i) -- (\j);
	}
	
	\draw[ID2] (bb3) -- node[above]{$+$} (ba3);
	\draw[ID2] (bb1) -- node[below right = 0 and -5pt]{$+$} (ba1);

	\draw[ID1] (bb4) -- (ba4);

	\coordinate [label=below:{$\alpha(\calP_m)$}] () at (a5);
	\coordinate [label=below:{$\alpha^+$}] () at (ba5);
	
	\draw[dotted, semithick] (b1) -- (bb1);
	\coordinate [label=above right:{$\beta^*$}] () at (b1);
	\coordinate [label=above right:{$\beta^*$}] () at (bb1);
	
	\foreach \i in {a1, a2, a3, a4, a5, b1, b2, b3, b4, b5, b6, ba-2, ba-1, ba0, ba1, ba2, ba3, ba4, ba5, bb-2, bb-1, bb0, bb1, bb2, bb3, bb4, bb5, bb6} {
		\coordinate [left = \side] (l\i) at (\i);
		\coordinate [right = \side] (r\i) at (\i);
	}

	\draw[[-{Latex[length=3mm]}, dashed, thick] ($(rb1)!0.1!(ra1)$) -- node[above left = 0 and -4pt]{$\calP_m$} ($(rb1)!0.6!(ra1)$);
	\draw[[-{Latex[length=3mm]}, dashed, thick] ($(rb5)!0.5!(ra5)$) -- node[below right = 0 and -4pt]{$\calP_m$} (ra5);
	
	\coordinate [below = 1cm] (Pms) at (ra2);
	\coordinate [label=below:{$\calP_{\ell}$}] () at (Pms);
	\coordinate [above = 1.5cm] (Pmt) at (rb4);
	\draw[[-{Latex[length=3mm]}, thick] (Pms) -- (ra2) -- (rb3) -- (ra3) -- (rb4) -- (Pmt);
	
	\coordinate [below = 1cm] (oPm-1s) at (rba3);
	\coordinate [label=below:{$\calP_{m-1}$}] () at (oPm-1s);
	\draw[[-{Latex[length=3mm]}, dashed, thick] (oPm-1s) -- (rba3);
	\draw[[-{Latex[length=3mm]}, dashed, thick] (rba3) --node[below = 3pt]{$\calQ_m$} (rbb3) -- (rba2) -- (rbb2) -- (rba1) -- ($(rbb1)!0.1!(rba1)$);
	
	\coordinate [below = 1cm] (nPm-1s) at (lba3);
	\draw[[-{Latex[length=3mm]}, thick] (nPm-1s) -- (lba3) -- (lbb4) -- (lba4) -- (lbb5) -- ($(lba5)!0.1!(lbb5)$);
	
	\coordinate [above = 5pt] (Q+s) at ($(lba5)!0.1!(lbb5)$);
	\coordinate [above = 5pt] (Q+st) at (lbb5);
	\draw[[-{Latex[length=3mm]}, dotted, thick] (Q+s) -- node[above]{$Q^+$} ($(Q+s)!0.7!(Q+st)$);
	
	\coordinate (Q+ts) at ($(Q+s) + (4 * \size, 0)$);
	\coordinate (Q+t) at ($(Q+st) + (4 * \size, 0)$);
	\draw[[-{Latex[length=3mm]}, dotted, thick] ($(Q+ts)!0.4!(Q+t)$) -- node[above]{$Q^+$} (Q+t);
	
	\coordinate [left = 5pt] (Q-s) at (ba-2);
	\coordinate [above = 1cm] (Q-st) at (Q-s);
	\draw[[-{Latex[length=3mm]}, dotted, thick] (Q-s) -- node[left = -4pt]{$Q^-$} (Q-st);
	
	\coordinate [right = 5pt] (Q-t) at (bb1);
	\coordinate [below = 1cm] (Q-ts) at (Q-t);
	\draw[[-{Latex[length=3mm]}, dotted, thick] (Q-ts) -- node[right = -3pt]{$Q^-$} (Q-t);
	
	\coordinate [below = 1cm] (Pls) at (lba0);
	\coordinate [label=below:{$\calP_{\ell-1}$}] () at (Pls);
	\draw[[-{Latex[length=3mm]}, thick] (Pls) -- (lba0);
	\draw[[-{Latex[length=3mm]}, thick] (lba0) -- (lbb0) -- node[left = -3pt]{$\calQ_{\ell}$} (lba-1) -- (lbb-1);
	\coordinate [above = 1.5cm] (Plt) at (lbb-1);
	\draw[[-{Latex[length=3mm]}, thick] (lbb-1) -- node[right]{$\calP_\ell$} (Plt);
	
	\coordinate[label=below:{$\alpha^-$}] () at ($(ba-2) + (0, -1pt)$);
	
	\coordinate[label=left:$\cdots$] () at ($(bb6)!0.5!(ba5) + (-0.5cm, 0)$);
	\coordinate (ba-3) at ($(ba-2) + (\size, 0)$);
	\coordinate[label=right:$\cdots$] () at ($(bb-2)!0.5!(ba-3)$);
	
	\end{tikzpicture}
	\caption{
		Modification in \cref{subsubsec:cycle}.
		The thick lines and the thin lines represent edges in $I$ and in $E \setminus I$,
		respectively.
		The solid lines and the wavy lines represent rank-2 edges and rank-1 edges,
		respectively.
		The blue lines and the red lines represent deleted edges from $I$
		and added edges to $I$ by the modification~\eqref{eq:I' cycle case 1},
		respectively.
		The dashed paths and the solid paths represent outer/inner space-walks in $\calT$ and in $\calT'$,
		respectively.
	}
	\label{fig:cycle}
\end{figure}
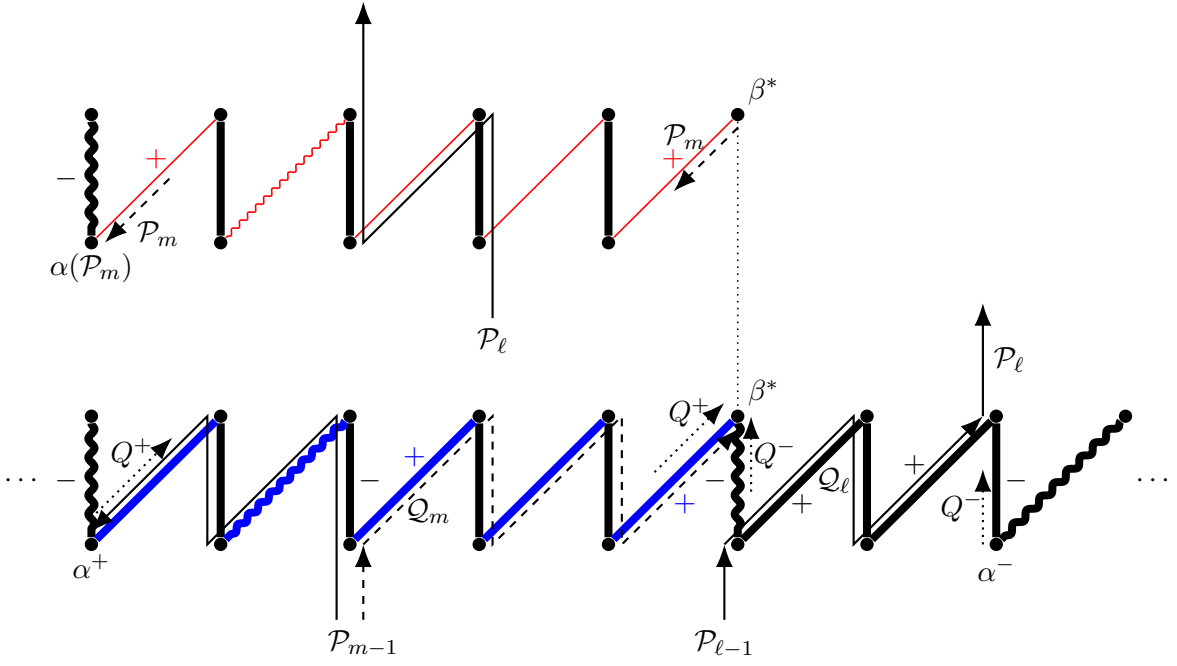
Then the following holds.
\begin{lem}\label{lem:T' cycle case 1}
	$\calT'$ is an augmenting space-walk for $I'$.
\end{lem}
\begin{proof}
	$\calT'$ clearly satisfies ($\Ai$).
	By $X(\calP_{m-1}) \neq U_{\alpha(\calP_{m-1})}^-$ and \cref{lem:diff}~(2),
	$X(\calP_{m-1}) \triangleright Q^+[\alpha(\calQ_m), \alpha^+]$ forms an outer space-walk for $I'$,
	and
	the last space of $X(\calP_{m-1}) \triangleright R$ is different from $U_{\alpha^+}^-$
	and from $\ker_{I'}(\alpha^+) (\supseteq U_{\alpha^+}^-)$.
	Thus $\calT'$ satisfies ($\Al$).
	
	The new q-matching $I'$ is obtained from $I$ by addition and deletion.
	Therefore some inner and outer space-walks for $I$ are no longer those for $I'$.  
	In particular, $\calP_{\ell}$ and $\calQ_{\ell}$ ($\ell \leq m-1$) in~\eqref{eq:T' cycle case 1}
	are not necessarily outer and inner space-walks for $I'$.
	We however show that such space-walks can admit another compatible concatenation structure, which 
	results that $\calT'$ is a compatibly-concatenated space-walk for $I'$.

	(1).
	First we examine the effect of the relabeling of $V^+_{\beta}$ and $U^-_{\alpha}$
	on 
	the subwalk of $Q^-$ of rank-2 edges ending $\beta^*$ in the case of $Y^\vee \neq V_{\beta^*}^+$.
	We may assume that the elimination cannot reach $C$ 
	(otherwise the relabeled vertices belong to isolated rank-2 edges in $I'$; see (2) below).
	In this case, by ($\Ni$), there is no outer space-walk $\calP_{\ell}$
	ending vertex $\alpha$ in $Q^-$ with the last space $X(\calP_{\ell}) \neq U_{\alpha}^+$.
	(Otherwise we could concatenate $\calP_\ell \circ \calQ^-[\alpha, \beta^*] \circ \calP_m$ compatibly,
	violating ($\Ni$).)
	Consequently, if $\calQ_{\ell}$ meets $Q^-$, 
	then it is still an inner space-walk for $I'$ with unchanged labels; 
	it enters $\alpha$ with $X(\calQ_\ell) = U_\alpha^+ = X(\calP_{\ell-1})$,
	leaves $\beta$ with $Y(\calQ_\ell) = V_\beta^-$,
	and does not meet $Q^+$ (even if $C$ is a cycle).
	
	(2).
	Second we consider the effect of addition of $P_m$ to $I'$, 
	particularly when a previous outer walk $P_\ell$ shares edges in $P_m$.
	(We see in~(4) below that such edges cannot be eliminated.)
	The intersection $P_m \cap P_\ell$
	is the disjoint union of subwalks
	(each of which must consist of rank-2 edges by the irredundancy).
	Consider a subwalk starting $\alpha$ in $P_m$,  
	which has (newly assigned) label $U_\alpha^+$ at $\alpha$ by ($\No$)
	and runs abreast with $P_m$ and labels $U^+_{\bullet}$ and $V^-_{\bullet}$.
	It leaves $P_m$ at $\beta$, or it ends at $\alpha(\calP_m)$ with label $\ker_I(\alpha(\calP_m))$.
	For the former case, $\calP_\ell[\alpha,\beta]$ is an inner space-walk for $I'$
	in which $\calP_\ell(\alpha] \circ \calP_\ell[\alpha,\beta] \circ \calP_\ell [\beta)$ is
	compatibly concatenated.
	For the latter case,
	since $C$ is a cycle,
	we have $\alpha(\calP_m) \neq \alpha^-$
	and
	$\calP_\ell[\alpha,\alpha(\calP_m)]$ is compatibly concatenated with $\calQ_{\ell+1}$
	to become a single inner space-walk for $I'$.
	The other possible subwalk starts at initial vertex $\beta^*$ of $P_m$.
	In this case, $\calQ_{\ell}$ stretches to a single inner space-walk $\calQ_\ell \circ \calP_{\ell}[\beta)$ 
	or $\calQ_\ell \circ \calP_{\ell} \circ \calQ_{\ell +1}$, compatibly concatenated with the next space-walk.

	(3).
	Third we consider the effect of deletion of the $+$-edges in $Q^+$ from $I$.
	Suppose that $Q_\ell$ shares edges in $Q^+$.
	By (2) above and re-index,
	we can regard $\calP_{\ell-1}$ and $\calP_\ell$
	as outer space-walks for $I'$.
	By ($\Ni$),
	the intersection $Q_\ell \cap Q^+$ is a subwalk 
	such that it starts at the $-$-edge $\alpha(\calQ_\ell) \beta'$ with $X(\calQ_\ell) = U_{\alpha(\calQ_\ell)}^- = X(\calP_{\ell-1})$
	or it starts at the $+$-edge $\beta^* \alpha'$ with the subspace $V_{\beta^*}^+$ at $\beta^*$.
	For the former case, 
	$\calP_{\ell-1} \circ \calQ_\ell \circ \calP_\ell$ is a single outer space-walk for $I'$ if it leaves a vertex $\beta$ in $Q^+$, and
	$\calP_\ell \circ \calQ_\ell = (\calP_{\ell} \circ \calQ_\ell(\alpha^+]) \circ \calQ_\ell[\alpha^+)$ 
	is a compatible concatenation of outer and inner paths for $I'$
	otherwise (i.e., it ends at $\alpha^+$).
	The latter case is similar: $\calQ_\ell \circ \calP_\ell = \calQ_\ell(\beta^*] \circ (\calQ_\ell [\beta^*) \circ \calP_\ell)$
	is a compable concatenation of inner and outer paths, or 
	$\calQ_\ell = \calQ_\ell(\beta^*] \circ \calQ_\ell [\beta^*, \alpha^+] \circ \calQ_\ell [\alpha^+)$ 
	is that of inner, outer, and inner space-walks.

	(4).
	Finally we consider the effect of elimination operation.
	It occurs precisely when the last vertex $\alpha(\calP_m)$ of $P_m$ is not incident to any edge in $I$ and
	the last edge of $P_m$ is rank-2.
	By ($\No$),
	removed edges in $P_m$ cannot be shared by any previous outer walk $P_\ell$.
	Furthermore, if all edges in $P_m$ are rank-2, then the elimination enters $C$ 
	and removes all $-$-edges in $Q^-$. 
	Hence we have to consider the situation where some inner space-walk $\calQ_\ell$ meets $Q^-$. 
	By the same argument as~(1),
	($\Ni$) implies that $Q_\ell$ must start at $+$-edge in $Q^-$ 
	with $X(\calQ_\ell) = U_{\alpha(\calQ_\ell)}^+ = X(\calP_{\ell-1})$,
	leaves $\beta(\calQ_\ell)$ with $Y(\calQ_\ell) = V_{\beta(\calQ_\ell)}^-$
	(and does not meet $Q^+$ even if $C$ is a cycle).
	Now, if $Q_\ell$ is included in $Q^-$,
	then
	$\calP_{\ell-1} \circ \calQ_\ell \circ \calP_{\ell}$ is a single outer space-walk for $I'$.
	Otherwise $Q_\ell$ passes through the initial vertex $\alpha^-$ of $Q^-$.
	Then
	$\calP_{\ell-1} \circ \calQ_\ell(\alpha]$
	is a single outer space-walk compatible with the inner space-walk $\calQ_\ell[\alpha)$.

	Summarizing,  $\calT'$ is viewed as a compatibly-concatenated space-walk for $I'$. 
\end{proof}

Let $(I, \calT) \leftarrow (I', \calT')$.
This update can be done in $O(|E|)$ time,
since
we can compute the front-propagation
in $O(|E|)$ time.
The following holds on $\theta$:
\begin{lem}\label{lem:theta cycle case 1}
	$\theta$ decreases by at least $|Q_m|$.
\end{lem}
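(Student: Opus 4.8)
The quantity to control is $\theta(I,\calT)=N(I,\calT)+D_{\rm inner}(I,\calT)$; in this step $\calT$ becomes $\calT'$ (see \eqref{eq:T' cycle case 1}) and, simultaneously, $I$ becomes $I'$ (see \eqref{eq:I' cycle case 1}). The plan is to prove $\theta(I',\calT')\le\theta(I,\calT)-|Q_m|$ through three observations, using \cref{lem:T' cycle case 1} (in particular its re-interpretation of the earlier space-walks $\calP_\ell,\calQ_\ell$, $\ell\le m-1$, as outer/inner pieces for $I'$) without re-proving it. The delicate point — and the one where the summand $D_{\rm inner}$ earns its keep — is that $I$ is modified by \emph{both} an insertion (the path $P_m$) and a deletion (the $+$-edges of $Q^+$), so edges can both enter and leave the extended support.

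\emph{Observation 1: the extended support only shrinks.} The underlying walk $T'$ is the union of the underlying walk of the prefix $\calP_0\circ\calQ_1\circ\cdots\circ\calP_{m-1}$ (a sub-walk of $T$) with the appended walk $Q^+[\alpha(\calQ_m),\alpha^+]\subseteq C$, so $T'\subseteq T\cup C$. Every vertex of $P_m$ other than $\beta^*$ has $I$-degree at most $1$: the interior vertices of the outer walk $\calP_m$ are incident to an isolated rank-$2$ edge of $I$, and $\alpha(\calP_m)\notin C$ because $(\Al)$ forces $\ker_I(\alpha(\calP_m))\ne\{0\}$ while every vertex of the cycle $C$ has $I$-degree $2$. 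Hence $P_m$ meets $C$ in the single vertex $\beta^*$, so the only rank-$1$ components of $I$ altered in passing to $I'$ are $C$ itself and, possibly, the rank-$1$ component at $\alpha(\calP_m)$ — both already inside the extended support of $\calT$. Consequently every edge of a rank-$1$ component of $I'$ that meets $T'$ lies in $C$, in $P_m$, or in a component of $I$ already met by $T$; together with $T'\subseteq T\cup C$ this shows that the extended support of $\calT'$ is contained in that of $\calT$, so $N(I,\calT)-N(I',\calT')$ counts exactly the edges that leave it.

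\emph{Observation 2: each edge of $Q_m$ is accounted for.} All $|Q_m|$ edges of $Q_m$ lie in $C$, hence in the extended support of $\calT$. In $I'$, the $+$-edges of $Q_m$ are deleted and the $-$-edges of $Q_m$ become isolated rank-$2$ edges (their neighbouring $+$-edges having been deleted, and $P_m$ touching neither of their endpoints since $P_m$ meets $C$ only at $\beta^*$); so no edge of $Q_m$ belongs to a rank-$1$ component of $I'$. Thus an edge $e\in Q_m$ that does \emph{not} leave the extended support must lie on $T'$. It does not lie on the appended walk $Q^+[\alpha(\calQ_m),\alpha^+]$, which is edge-disjoint from $Q_m$, nor on any $\calP_\ell$ ($\ell\le m-1$), whose $I$-edges are isolated rank-$2$ edges while $e\in I$ lies in the rank-$1$ component $C$; hence $e$ lies on some $\calQ_\ell$ with $\ell\le m-1$, so $e\in Q_\ell\cap Q_m$ was inner-double in $\calT$. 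Moreover, since $e$ is deleted from or isolated in $I'$, while an inner space-walk of $I'$ uses only $I'$-edges lying in rank-$1$ components, the portion of $\calQ_\ell$ carrying $e$ is, in the re-interpreted $\calT'$, an \emph{outer} piece; so $e$ is not inner-double in $\calT'$.

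\emph{Observation 3: no new inner-double edge, and conclusion.} If some edge were inner-double in $\calT'$ but not in $\calT$, one of its two occurrences among the inner pieces of $\calT'$ would be a portion of some $\calP_\ell$ re-interpreted as inner, i.e.\ a rank-$2$ edge common to $P_m$ and $P_\ell$; then the edge would lie on three of the walks of $\calT$ (on $P_m$, on $P_\ell$, and on the walk carrying its second occurrence), forcing a shared endpoint to appear three times in $\calT$ and violating irredundancy. Now let $x$ be the number of edges of $Q_m$ that remain in the extended support. By Observations~1--2, $N$ decreases by at least $|Q_m|-x$; by Observations~2--3, those $x$ edges all leave the set of inner-double edges of $\calT$ while nothing enters it, so $D_{\rm inner}$ decreases by at least $x$; hence $\theta$ decreases by at least $(|Q_m|-x)+x=|Q_m|$. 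The step I expect to be the real work is Observation~1 together with the dichotomy in Observation~2 — pinning down, through the re-interpretation of \cref{lem:T' cycle case 1}, exactly which edges of the old component $C$ survive in the new extended support; the parallel argument in the all-rank-$2$ sub-case (where the elimination enters $C$ and removes the $-$-edges of $Q^-$) should go through symmetrically.
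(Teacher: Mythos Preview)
Your argument is correct and follows the same approach as the paper's proof: each edge of $Q_m$ either exits the extended support (contributing to a decrease in $N$) or was inner-double in $\calT$ and ceases to be so in $\calT'$ (contributing to a decrease in $D_{\rm inner}$). Your version is considerably more detailed than the paper's three-sentence sketch --- in particular your Observations~1 and~3 (that the extended support only shrinks, and that no new inner-double edges arise) make explicit what the paper leaves implicit.
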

\begin{proof}
	An edge $\alpha \beta \in Q_m$ contributes only to $N$ if $\alpha \beta$ was used only once in $\calT$,
	and to $N$ and $D_{\rm inner}$
	if $\alpha \beta$ was used twice in $\calT$.
	In the former case,
	by the transformation~\eqref{eq:T' 1 case 1},
	$\alpha \beta$ exits the extended support of $\calT$,
	decreasing $N$ by one.
	In the latter case,
	$\alpha \beta$ exits an inner space-walk,
	decreasing $D_{\rm inner}$ by one.
\end{proof}

By \cref{lem:theta cycle case 1},
the quantity of $\theta$ decrease by at least $|Q_m|$.
Hence
the difference of $\theta$ between the one before~\eqref{eq:T' Ninner} and the one after~\eqref{eq:T' cycle case 1}
is bounded by $\Delta - |Q_m|$;
$\theta$ strictly decreases by \cref{lem:Delta}.
Return to the initial stage (\cref{subsec:initial stage}).

\paragraph{Case~2: $C$ has no rank-1 $-$-edge.}
Define
\begin{align*}
I' := I \setminus \{ \text{all $+$-edges in $C$} \}.
\end{align*}
It is clear that $I'$ is a q-matching with $r(I') = r(I)$.
Suppose
that
$C$ consists of the disjoint union of $Q_m$ and $Q$.
Note that $Q \circ P_m$ is a truncated outer walk for $I'$.

Suppose that $\calP_{m-1} \triangleright (Q \circ P_m)$
forms a single outer space-walk for $I'$
and its last space is different from $\ker_I(\alpha(\calP_m)) = \ker_{I'}(\alpha(\calP_m))$.
Then define
\begin{align}\label{eq:T' 1 case 1}
\calT' := \calP_0 \circ \calQ_1 \circ \cdots \circ \calQ_{m-1} \circ \left(\calP_{m-1} \triangleright (Q \circ P_m)\right).
\end{align}
This is an augmenting space-walk for $I'$.
Indeed, ($\Ai$) is obvious.
By ($\Ni$),
every inner space-walk $\calQ_\ell$ ($\ell \leq m-1$)
sharing edges in $C$
starts at a $-$-edge with $X(\calQ_\ell) = U_{\alpha(\calQ_\ell)}^- = X(\calP_{\ell -1})$.
By the same argument as (3) in \cref{lem:T' cycle case 1},
$\calP_{\ell - 1} \circ \calQ_\ell \circ \calP_\ell$
forms a single outer space-walk for $I'$.
($\Al$) immediately follows from the assumption on $\calP_{m-1} \triangleright (Q \circ P_m)$.

Let $(I, \calT) \leftarrow (I', \calT')$.
This update can be done in $O(|E|)$ time,
since
we can compute the front-propagation
in $O(|E|)$ time.
By the same argument as in \cref{lem:theta cycle case 1},
$\theta$ strictly decreases.
Return to the initial stage (\cref{subsec:initial stage}).

Suppose not.
In this case,
we obtain an augmenting space-walk $\calT'$ for $I$ ({\it not for $I'$})
such that the situation reduces to Case~1.
By the assumption,
the propagated space $Y$ of $\calP_{m-1} \triangleright (Q \circ P_m)$ at $\beta^* = \beta(\calQ_m) = \beta(\calQ)$ coincides with $Y^\vee$.
Note $Y^\vee \neq V_{\beta^*}^-$.
We also have $Y^\vee \neq V_{\beta^*}^+$.
Indeed,
by the compatibility of $(\calP_{m-1}, \calQ_m)$,
we have $X(\calP_{m-1}) \neq U_{\alpha(\calQ_m)}^-$.
Hence $Y$ is different from $V_{\beta^*}^+$.
It also holds that $X(\calP_{m-1}) \neq U_{\alpha(\calQ_m)}^+$
and no rank-1 edge exists in $Q$.
Indeed,
otherwise
the propagated space of $\calP_{m-1} \triangleright (Q \circ P_m)$ at $\beta^*$ is $V_{\beta^*}^- \neq Y^\vee$.
Thus define
\begin{align}\label{eq:T' 1 case 2}
\calT' := \calP_0 \circ \calQ_1 \circ \cdots \circ \calP_{m-1} \circ \calQ \circ (V_{\beta^*}^+ \triangleright P_m),
\end{align}
where $\calQ = (U_{\alpha_1}^-, \alpha_1 \beta_1, V_{\beta_1}^+, \beta_1 \alpha_2, \dots, \alpha_k \beta_k, V_{\beta_k}^+ = V_{\beta^*}^+)$
is the inner space-walk corresponding to $Q$.
Then $\calT'$ is an augmenting space-walk for $I$.
Indeed, ($\Ai$) clearly holds.
The compatibility of $(\calP_{m-1}, \calQ)$ follows from $X(\calP_{m-1}) \neq U_{\alpha(\calQ_m)}^+$.
By $Y^\vee \neq V_{\beta^*}^+$ and \cref{lem:diff},
we have ($\Al$).

Let $(I, \calT) \leftarrow (I', \calT')$.
This update can be done in $O(|E|)$ time,
since
we can construct the inner space-walk $\calQ$
and compute the front-propagation $V_{\beta^*}^+ \triangleright P_m$
in $O(|E|)$ time.
The following holds on $\theta$:
\begin{lem}\label{lem:dec 1 case 2}
	$\theta$ decreases by at least $\Delta$.
\end{lem}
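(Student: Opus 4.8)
The plan is to work with the decomposition $\theta(I,\calT)=N(I,\calT)+D_{\rm inner}(I,\calT)$ and to treat the two summands separately, using that the modification~\eqref{eq:T' 1 case 2} leaves $I$ untouched and only replaces the tail $\calQ_m\circ\calP_m$ of $\calT$ by $\calQ\circ(V_{\beta^*}^+\triangleright P_m)$: the walk underlying the new inner piece is $Q=C\setminus Q_m$ (an arc of the cycle $C$, consisting only of rank‑$2$ edges as established just before \eqref{eq:T' 1 case 2}), and the new outer piece still runs along the unchanged path $P_m$.

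First I would show that $N$ does not increase. Since $Q$ lies inside the rank‑$1$ component $C$, which is already part of the extended support of $\calT$ (it is the component meeting $Q_m$), and since $P_m$ and all of $\calP_0,\dots,\calP_{m-1},\calQ_1,\dots,\calQ_{m-1}$ are unchanged, the extended support of $\calT'$ coincides with that of $\calT$; hence $N(I,\calT')=N(I,\calT)$.

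The whole decrease must therefore come from $D_{\rm inner}$, which I would account for edge by edge. The family of inner space‑walks changes only by deleting $\calQ_m$ and inserting $\calQ$, and $Q_m$ and $Q$ are edge‑disjoint. So every edge of $Q_m$ that was inner‑double loses its occurrence in $\calQ_m$ and, being absent from $Q$, stops being inner‑double; by the proof of \cref{lem:Delta} this set is exactly the inner‑double set $F'$ of the post‑\eqref{eq:T' Ninner} walk $Q_m$. Conversely, by irredundancy of $\calT'$ no edge of $Q$ can already lie in two of $\calQ_1,\dots,\calQ_{m-1}$, so the only edges that become inner‑double are the edges of $Q$ meeting some $\calQ_j$ with $j\le m-1$; let $g$ be their number. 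Edges outside $Q_m\cup Q$ keep their membership, so $D_{\rm inner}(I,\calT')-D_{\rm inner}(I,\calT)=g-|F'|$. Since the proof of \cref{lem:Delta} also yields $\Delta\le|F'|-|F|$, where $F$ is the inner‑double set of the \emph{previous} $Q_m$ (before~\eqref{eq:T' Ninner}), it suffices to prove $g\le|F|$.

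This inequality $g\le|F|$ is the crux and the step I expect to be the main obstacle. The previous $\calQ_m$ already lies in $C$, because \eqref{eq:T' Ninner} keeps the last inner walk in the rank‑$1$ component through $\beta^*=\beta(\calP_m)$; thus $F$ is the set of edges of that previous $Q_m$ lying in some earlier inner walk. What must be shown is that whenever an edge of $Q$ is met by some $\calQ_j$ with $j\le m-1$, that edge also belonged to the previous $Q_m$, hence lies in $F$. I would obtain this exactly as in the local case analysis in the proof of \cref{lem:T' cycle case 1} (parts (1)--(4)): condition ($\Ni$) forces any earlier inner space‑walk entering the cycle $C$ to do so along a prescribed $-$‑edge with a prescribed label at its entry vertex, which—together with irredundancy and the rank‑$2$‑ness of $Q$—pins down the arc of $C$ it can traverse and confines its overlap with $Q$ to edges already used by the previous $Q_m$. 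Keeping the signs, the labels at $\beta^*$ and $\alpha(\calQ_m)$, and the re‑indexing of the space‑walks straight while carrying out this overlap bookkeeping is the only genuinely delicate part; once $g\le|F|\le|F'|-\Delta$ is in hand, $D_{\rm inner}$ drops by at least $\Delta$ and, since $N$ is unchanged, so does $\theta$.
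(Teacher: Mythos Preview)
Your overall framework is right: $N$ is unchanged because $Q\subseteq C$ is already in the extended support, and the whole decrease must come from $D_{\rm inner}$, which changes by $g-|F'|$ in your notation. Your reduction to $g\le|F|$ is also arithmetically correct.

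The gap is at the crux. You try to bound $g$ by $|F|$ via an overlap argument, invoking the case analysis of \cref{lem:T' cycle case 1}, but that analysis (written for Case~1, where $C$ has a rank-$1$ $-$-edge) does not deliver what you claim here. Concretely, you assert that any edge of $Q$ lying in some earlier $\calQ_j$ must already lie in the \emph{pre}-\eqref{eq:T' Ninner} last inner walk. There is no reason for this: after \eqref{eq:T' Ninner} the old and new last inner walks are different arcs of $C$ ending at $\beta^*$, and $Q$ is the complement of the \emph{new} one; nothing in ($\Ni$) or irredundancy ties an overlap with $Q$ to membership in the \emph{old} last arc. Your sketch (``($\Ni$) forces any earlier inner space-walk entering $C$ to do so along a prescribed $-$-edge~\dots'') does not establish this containment.

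What you are missing is that one can prove the much stronger statement $g=0$ directly, and the key input is the pair of inequalities established immediately before~\eqref{eq:T' 1 case 2}, namely $V_{\beta^*}^+\neq Y^\vee$ and $V_{\beta^*}^-\neq Y^\vee$. With these in hand, ($\Ni$) rules out \emph{any} outer space-walk $\calP_\ell$ with $\ell\le m-2$ whose last vertex lies in $Q$: from any $\alpha$-vertex of $Q$ one can reach $\beta^*$ by an inner space-walk inside $C$ (all edges of $Q$ are rank-$2$, and in Case~2 all $-$-edges of $C$ are rank-$2$), and the terminal label at $\beta^*$ is necessarily $V_{\beta^*}^+$ or $V_{\beta^*}^-$, both different from $Y^\vee$---contradicting ($\Ni$). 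Consequently no $\calQ_j$ with $j\le m-1$ can start in $Q$; a second appeal to ($\Ni$) (and the same rank-$2$ structure) then shows no such $\calQ_j$ can enter $Q$ at all. Hence the new last inner walk $\calQ$ has \emph{no} inner-double edge, $D_{\rm inner}$ drops by $|F'|=\tilde\Delta\ge\Delta$, and you are done. This is the paper's route; once you notice $V_{\beta^*}^\pm\neq Y^\vee$, the bookkeeping you flagged as ``the only genuinely delicate part'' disappears.
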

\begin{proof}
	By the proof of \cref{lem:Delta},
	$\Delta$ is at most the number $\tilde{\Delta}$ of inner-double edges in $\calQ_m$.
	By ($\Ni$) and $V_{\beta^*}^+ \neq Y^\vee \neq V_{\beta^*}^-$,
	there is no outer space-walk $\calP_{\ell}$ with $\ell < m-1$
	that intersects $Q$.
	Hence,
	in the transformation~\eqref{eq:T' 1 case 2},
	the last inner walk $\calQ$ of $\calT'$ has no inner-double edge by ($\Ni$).
	Thus $\theta$ decreases by at least $\tilde{\Delta}$.
\end{proof}
By \cref{lem:Delta,lem:dec 1 case 2},
$\theta$ is not greater than that in the beginning of \cref{subsec:path}.
Now $\calT$ satisfies ($\Ni$).
Since there was at least one $+$-edge in $C$ by (q-Cycle)
and the last edge of the last inner space-walk $\calQ$ of $\calT$ is a $-$-edge,
the resulting augmenting space-walk $\calT$ also satisfies the assumption of Case~1.
Go to Case~1 (with $\Delta = 0$).

\subsubsection{$C$ is a path component}\label{subsubsec:path}
In this case, as in the previous case (\cref{subsubsec:cycle}),
let $I'$ be defined by~\eqref{eq:I' cycle case 1}.
\begin{lem}\label{lem:I' path}
	$I'$ satisfies {\rm (Deg)}, {\rm (q-Cycle)}, {\rm (VL)}, and $r(I') = r(I)$.
\end{lem}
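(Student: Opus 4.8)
The plan is to verify the four assertions in the order they are listed, following closely the template of the proof of \cref{lem:I' cycle case 1} and isolating the few places where the path structure of $C$ (rather than a cycle structure) forces a different argument. Write $Q^+ = (\alpha_1\beta_1, \beta_1\alpha_2, \dots, \alpha_t\beta_t)$ with $\alpha^+ = \alpha_1$ and $\beta^* = \beta_t$, so that the $\alpha_i\beta_i$ are $+$-edges and the connecting edges $\beta_i\alpha_{i+1}$ are rank-2 $-$-edges. Passing from $I$ to $I' = (I \cup P_m) \setminus \{\text{$+$-edges of }Q^+\}$ deletes the $t$ $+$-edges of $Q^+$, adds the edges of $P_m$ not already in $I$, turns the $t-1$ rank-2 $-$-edges of $Q^+$ into isolated rank-2 edges, and destroys the isolated status of the isolated rank-2 edges that lay along $P_m$.

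For (Deg): the vertices of $P_m$ not on $C$ had $I$-degree at most $1$; at the junction $\beta^*$ the unique $+$-edge of $C$ incident to $\beta^*$ is precisely $\alpha_t\beta^* \in Q^+$ and is deleted exactly as the single $P_m$-edge at $\beta^*$ is added, so $\deg_{I'}(\beta^*) \le 2$; and every internal vertex of $Q^+$ drops to degree $1$. For $r(I') = r(I)$: the change in $|I'|$ and the change in the number of isolated rank-2 edges cancel, exactly as in \cref{lem:I' cycle case 1}. (q-Cycle) is immediate unless $\alpha(\calP_m)$ lies on $C$; this can happen here, since ($\Al$) forces $\alpha(\calP_m)$ to be an endpoint of $C$ carrying a rank-1 end edge (using (Path)), in which case $I \cup P_m$ contains one cycle through $P_m$ and a sub-path of $C$, and one checks that this cycle is either destroyed by the deletion of the $+$-edges of $Q^+$ (when $\alpha(\calP_m)$ lies on the $Q^+$-side of $\beta^*$) or retains the rank-1 end edge of $C$ at $\alpha(\calP_m)$ (otherwise).

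The heart of the proof is the construction of a valid labeling of $I'$. As in the proofs of \cref{lem:I^*} and \cref{lem:I' cycle case 1}, I would first fix the labels along $P_m$ by back-propagating $\ker_I(\alpha(\calP_m))$ (or an arbitrary $1$-dimensional subspace of $U_{\alpha(\calP_m)}$ when $\deg_I(\alpha(\calP_m)) = 0$), obtaining the $U^-_\bullet$, $V^+_\bullet$ labels while keeping the $U^+_\bullet$, $V^-_\bullet$ labels equal to the spaces carried by $\calP_m$. Then, on $C$, let $(\beta^* = \beta'_1\alpha'_1, \alpha'_1\beta'_2, \dots)$ be the maximal rank-2 sub-path of $C$ issuing from $\beta^*$ with a $-$-edge, re-propagate the $U^-_\bullet$, $V^+_\bullet$ labels outward along it, and leave $V^+_{\beta^*}$ and all $U^+_\bullet$, $V^-_\bullet$ unchanged on $C$. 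Then \eqref{eq:+-} holds by construction of the two propagations, and \eqref{eq:++ --} holds because along each propagation a re-propagated $V^+$ (resp.\ $U^-$) coincides with the relevant right (resp.\ left) kernel whenever the current edge is rank-1; the inequalities $U^+_\alpha \neq U^-_\alpha$ and $V^+_\beta \neq V^-_\beta$ follow from $X(\calP_m) \neq \ker_I(\alpha(\calP_m))$ via \cref{lem:diff}~(1), the case of a $2$-dimensional propagated space being treated exactly as in \cref{lem:I^*}. Two junctions need a separate check: at $\beta^*$, the old $+$-edge of $C$ is gone, the surviving $-$-edge of $C$ there (if any) imposes the same kernel constraint as in $I$ and is respected since $V^-_{\beta^*}$ is unchanged, and the new $V^+_{\beta^*}$ produced by the back-propagation differs from $V^-_{\beta^*}$; at $\alpha^+$, since $C$ is a path, $\alpha^+$ is either an endpoint of the resulting path component or is incident to a rank-1 $-$-edge outside $Q^+$ (by maximality of $Q^+$), and in either case the label forced on $\alpha^+$ from that side is the unchanged $U^-_{\alpha^+}$, compatible with $U^+_{\alpha^+}$ being the space of $\calQ_m$ at $\alpha^+$. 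If (Path) fails for the resulting $I'$, one application of the elimination (\cref{lem:elimination}) restores it without lowering $r$.

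The step I expect to be the main obstacle is making the two junction checks airtight, i.e.\ proving that the reorganised component together with the newly isolated rank-2 edges carries a globally consistent valid labeling with no degenerate vertex. In \cref{lem:I' cycle case 1} the cycle was used repeatedly (for example ``$C$ is a cycle, hence $\alpha(\calP_m) \neq \alpha^-$'') to control the inner space-walks meeting $C$; in the path case these facts must be re-derived from (Path), from the maximality defining $Q^+$ and $\alpha^+$, and from ($\Al$) together with the irredundancy of $\calT$. Once the valid labeling is established, the remainder of \cref{subsubsec:path} will build the augmenting space-walk $\calT'$ for $I'$ in parallel with \cref{lem:T' cycle case 1}.
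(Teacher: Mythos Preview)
Your approach is the same as the paper's: reduce to \cref{lem:I' cycle case 1} and isolate what is new when $C$ is a path. The (Deg), (q-Cycle), and $r(I')=r(I)$ checks are fine. There is, however, one genuine gap in your (VL) argument.

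You handle the junctions at $\beta^*$ and at $\alpha^+$, but you do not address the junction at $\alpha(\calP_m)$ when $\alpha(\calP_m)\in C$. In the cycle lemma this was excluded by ``$\alpha(\calP_m)\notin C$,'' which is exactly what licensed the assumption ``$\alpha(\calP_m)$ is incident to a $-$-edge in $I$.'' In the path case $\alpha(\calP_m)$ can lie on $C$ (as an endpoint, carrying a rank-1 end edge by (Path)), and that end edge can be a $+$-edge. If it is, then after adding $P_m$ you have two $+$-edges meeting at $\alpha(\calP_m)$, so the $2$-colouring is inconsistent there and your back-propagation construction of $(U^-_\bullet,V^+_\bullet)$ along $P_m$ collides with the existing $+$-label $U^+_{\alpha(\calP_m)}=\ker_I(\alpha(\calP_m))$ coming from $C$.

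The paper's fix is a single extra move: after deleting the $+$-edges of $Q^+$, the vertices $\alpha(\calP_m)$ and $\alpha^+$ lie in a common component $C'$ of $I\setminus\{\text{$+$-edges of }Q^+\}$ that does \emph{not} contain $\beta^*$; swap the $+/-$ signs (and the corresponding valid labels) on all of $C'$. This makes the $C$-edge at $\alpha(\calP_m)$ a $-$-edge, so the ``we may assume $\alpha(\calP_m)$ is incident to a $-$-edge'' hypothesis is restored and your back-propagation along $P_m$ goes through verbatim; moreover the swap matches the colouring of the component through $\beta^*$, so no new conflict is created at $\alpha^+$. Once you insert this sign-swap step, the rest of your argument is correct and matches the paper.
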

\begin{proof}
	In addition to the proof of \cref{lem:I' cycle case 1},
	we need to consider the case where
	$\alpha(\calP_m)$ belongs to $C$
	and $\alpha(\calP_m)$ is incident to a $+$-edge $\alpha(\calP_m) \beta'$ of $C$ in $I$.
	In this case, $\alpha(\calP_m)$ and $\alpha^+$ belong to the same connected component $C'$ of $I \setminus \{ \text{all $+$-edges in $Q^+$} \}$;
	note that $\beta^*$ does not belong to $C'$.
	Thus, after deleting the $+$-edges in $Q^+$,
	we swap the signs $+$ and $-$ for all edges in $C'$ and the corresponding valid labelings
	so that $\alpha(\calP_m) \beta'$ is a $-$-edge.
	This coloring matches that of the resulting component containing $\beta^*$.
	Then the labels on the vertices in $P_m$
	are determined in the same way as in the proof of \cref{lem:I' cycle case 1}.
\end{proof}
If $I'$ satisfies (Path),
then
$I'$ is a q-matching with $r(I') = r(I)$.
Otherwise
we apply the elimination operation to $I'$ from $\alpha(\calP_m)$.
The resulting set, also denoted by $I'$, is a q-mathcing with $r(I') \geq r(I)$ by \cref{lem:elimination}.

We consider additional two patterns:
\begin{itemize}
	\item[(A)]
	$\alpha(\calP_m) = \alpha^+$.
	\item[(B)]
	$\deg_I(\beta^*) = 1$.
\end{itemize}
If neither (A) nor (B) holds,
then define $\calT'$ as~\eqref{eq:T' cycle case 1}.
If $\alpha(\calP_m) \neq \alpha^-$,
then, by the same argument as in \cref{lem:T' cycle case 1} (Case~1 of \cref{subsubsec:cycle}),
$\calT'$ is an augmenting space-walk for $I'$.
If $\alpha(\calP_m) = \alpha^-$,
then we need to consider the effect of addition of $P_m$ to $I'$
in addition to \cref{lem:T' cycle case 1}~(2).
Suppose that a previous outer walk $P_\ell$ shares edges in $P_m$.
Then it can happen that $P_m$ ends at $\alpha^-$ with label $\ker_I(\alpha^-)$
since $C$ is a path component.
In this case, the maximal common suffix of $P_m$ and $P_\ell$ is compatibly concatenated with $\calQ_{\ell+1} \cap \calQ^-$
to become a single inner space-walk for $I'$.
By the same argument as in \cref{lem:theta cycle case 1},
the difference of $\theta$ between the one before~\eqref{eq:T' Ninner} and the one after this update
is bounded by $\Delta - |Q_m|$;
$\theta$ strictly decreases by \cref{lem:Delta}.
Return to the initial stage (\cref{subsec:initial stage}).

In the following,
we consider the case where (A) or (B) holds.

\paragraph{Case~1: Only (A) holds.}
Suppose first that
the last edge of $\calP_m$ is rank-1.
In this case, the elimination does not occur.
Define $\calT'$ as~\eqref{eq:T' cycle case 1}.
Then $\calT'$ is an augmenting space-walk for $I'$.
Indeed, $\calT'$ satisfies ($\Ai$) and is a compatibly-concatenated space-walk for $I'$
by the same argument as in \cref{lem:T' cycle case 1}.
(A) implies that $\deg_I(\alpha^+) = 1$ and $\alpha^+$ is incident to a rank-1 in $I$.
Hence the last space of $\calP_{m-1} \triangleright Q^+[\alpha(\calQ_m), \alpha^+]$ is $\ker_I(\alpha^+) \neq \{0\}$.
On the other hand,
since $\calT$ satisfies ($\Al$) for $I$,
$X(\calP_m)$ is different from $\ker_I(\alpha^+)$.
By $\ker_{I'}(\alpha^+) = \ker_{I'}(\alpha(\calP_m)) = X(\calP_m)$,
$\calT'$ satisfies ($\Al$) for $I'$.

Suppose next that the last edge of $\calP_m$ is rank-2.
Then the elimination from $\alpha(\calP_m)$ occurs;
let $D$ denote the set of removed edges by the elimination.
Consider the path $P_m \cup Q^-$.
Denote by $\alpha^*$ the farthest vertex from $\alpha(\calP_m)$
in $P_m \cup Q^-$ such that it is incident to a deleted edge in $D$.
Let $R$ be the subpath in $P_m \cup Q^-$ from $\alpha^+ = \alpha(\calP_m)$ to $\alpha^*$.
Note here that its direction is the reserve of $P_m$,
and $\ker_{I'}(\alpha^*) \neq \{0\}$.
See Figure~\ref{fig:path case 1}.
\begin{figure}
	\centering
	\begin{tikzpicture}[
	node/.style={
		fill=black, circle, minimum height=5pt, inner sep=0pt,
	},
	I1/.style={
		line width = 3pt,
		decorate,
		decoration={snake, amplitude=.4mm,segment length=2.5mm,post length=0mm}
	},
	I2/.style={
		line width = 3pt
	},
	ID1/.style={
		line width = 3pt,
		blue,
		decorate,
		decoration={snake, amplitude=.3mm,segment length=2mm,post length=0mm}
	},
	ID2/.style={
		line width = 3pt,
		blue
	},
	NI/.style={
	},
	NIA2/.style={
		red,
		semithick
	},
	NIA1/.style={
		red,
		semithick,
		decorate,
		decoration={snake, amplitude=.3mm,segment length=1.5mm,post length=0mm}
	}
	]
	
	\def\mu{a1, a2, a3, a4, a5}
	\def\nu{b1, b2, b3, b4, b5}
	\def\bmu{ba-2, ba-1, ba0, ba1, ba2, ba3, ba4, ba5}
	\def\bnu{bb-2, bb-1, bb0, bb1, bb2, bb3, bb4, bb5}
	\def\size{1.7cm}
	\def\hight{4cm}
	\def\side{5pt}

	\nodecounter{\bnu}
	\coordinate (pos);
	\foreach \currentnode in \bnu {
		\node[node, below=0 of pos, anchor=center] (\currentnode) {};
		\coordinate (pos) at ($(pos)+(-\size, 0)$);
	}
	\nodecounter{\bmu}
	\coordinate (pos) at ($(bb-2) + (-\size, -\size)$);
	\foreach \currentnode in \bmu {
		\node[node, below=0 of pos, anchor=center] (\currentnode) {};
		\coordinate (pos) at ($(pos)+(-\size, 0)$);
	}
	
	\nodecounter{\nu}
	\coordinate (pos) at ($(bb1) + (0, \hight)$);
	\foreach \currentnode in \nu {
		\node[node, below=0 of pos, anchor=center] (\currentnode) {};
		\coordinate (pos) at ($(pos)+(-\size, 0)$);
	}
	\nodecounter{\mu}
	\coordinate (pos) at ($(b1) + (-\size, -\size)$);
	\foreach \currentnode in \mu {
		\node[node, below=0 of pos, anchor=center] (\currentnode) {};
		\coordinate (pos) at ($(pos)+(-\size, 0)$);
	}

	\draw[I1] (bb-2) -- (ba-2);
	\draw[ID2] (bb1) -- (ba0);
	\draw[ID2] (bb0) -- (ba-1);
	
	\foreach \i / \j in {bb2/ba1, bb3/ba2, bb5/ba4} {
		\draw[I2] (\i) -- (\j);
	}
	\draw[I2] (bb4) -- node[right = -2pt]{$-$} (ba3);
	\draw[ID2] (bb-1) -- node[left]{$-$} (ba-2);
	\draw[I2] (bb-1) -- (ba-1);
	\draw[I2] (bb0) -- node[below = 2pt]{$+$} (ba0);
	
	\draw[NIA2] (b2) -- (a2);
	\draw[NIA2] (b1) -- (a1);
	\draw[NIA2] (b5) -- (a5);
	\draw[NIA2] (b4) -- (a4);
	\draw[ID2] (b2) -- (a1);
	\draw[ID2] (b3) -- (a2);
	\draw[ID2] (b5) -- (a4);
	\draw[ID2] (b4) -- (a3);
	
	\draw[NIA2] (b3) -- (a3);
	
	\draw[ID2] (bb2) -- (ba2);
	\draw[ID1] (bb5) -- (ba5);
	
	\draw[ID2] (bb3) -- node[above]{$+$} (ba3);
	\draw[ID2] (bb1) -- node[below right = 0 and -5pt]{$+$} (ba1);

	\draw[ID1] (bb4) -- (ba4);
	
	\coordinate [label=below:{$\alpha$}] () at (a1);
	\coordinate [label=left:{$\alpha(\calP_m)$}] () at (a5);
	\coordinate [label=left:{$\alpha^+$}] () at (ba5);
	
	\draw[dotted, semithick] (b1) -- (bb1);
	\draw[dotted, semithick] (a5) -- (ba5);
	\coordinate [label=above right:{$\beta^*$}] () at (b1);
	\coordinate [label=above right:{$\beta^*$}] () at (bb1);
	
	\foreach \i in {a1, a2, a3, a4, a5, b1, b2, b3, b4, b5, ba-2, ba-1, ba0, ba1, ba2, ba3, ba4, ba5, bb-2, bb-1, bb0, bb1, bb2, bb3, bb4, bb5} {
		\coordinate [left = \side] (l\i) at (\i);
		\coordinate [right = \side] (r\i) at (\i);
	}

	\draw[[-{Latex[length=3mm]}, dashed, thick] ($(rb1)!0.1!(ra1)$) -- node[below right = -2pt and -4pt]{$\calP_m$} ($(rb1)!0.6!(ra1)$);
	\draw[[-{Latex[length=3mm]}, dashed, thick] ($(rb5)!0.5!(ra5)$) -- node[below right = 0 and -4pt]{$\calP_m$} (ra5);
	
	\coordinate [below = 1cm] (Pms) at (ra1);
	\coordinate [above = 1.5cm] (Pmt) at (rb3);
	\coordinate[label=below:{$\calP_\ell$}] () at (Pms);
	\draw[[-{Latex[length=3mm]}, dashed, thick] (Pms) -- (ra1) -- (rb2) -- (ra2) -- (rb3) -- node[right]{(p1)} (Pmt);
	
	\coordinate [below = 1cm] (oPm-1s) at (rba3);
	\coordinate[label=below:{$\calP_{m-1}$}] () at (oPm-1s);
	\draw[[-{Latex[length=3mm]}, dashed, thick] (oPm-1s) -- (rba3);
	\draw[[-{Latex[length=3mm]}, dashed, thick] (rba3) --node[below = 3pt]{$\calQ_m$} (rbb3) -- (rba2) -- (rbb2) -- (rba1) -- ($(rbb1)!0.1!(rba1)$);
	
	\coordinate [below = 1cm] (nPm-1s) at (lba3);
	\draw[[-{Latex[length=3mm]}, thick] (nPm-1s) -- (lba3) -- (lbb4) -- (lba4) -- (lbb5) -- ($(lba5)!0.1!(lbb5)$);
	\draw[[-{Latex[length=3mm]}, thick] ($(la5)!0.1!(lb5)$) -- (lb5) -- (la4) -- (lb4) -- (la3) -- (lb3) -- (la2) -- (lb2) -- (la1) -- (lb1);
	\draw[[-{Latex[length=3mm]}, thick] (lbb1) -- (lba0) -- (lbb0) -- (lba-1) -- (lbb-1) -- (lba-2);
	
	\coordinate [above = 5pt] (Q+s) at ($(lba5)!0.1!(lbb5)$);
	\coordinate [above = 5pt] (Q+st) at (lbb5);
	\draw[[-{Latex[length=3mm]}, dotted, thick] (Q+s) -- node[above]{$Q^+$} ($(Q+s)!0.7!(Q+st)$);
	
	\coordinate (Q+ts) at ($(Q+s) + (4 * \size, 0)$);
	\coordinate (Q+t) at ($(Q+st) + (4 * \size, 0)$);
	\draw[[-{Latex[length=3mm]}, dotted, thick] ($(Q+ts)!0.4!(Q+t)$) -- node[above]{$Q^+$} (Q+t);
	
	\coordinate [right = 5pt] (Q-s) at (ba-2);
	\coordinate [above = 1.3cm] (Q-st) at (Q-s);
	\draw[[-{Latex[length=3mm]}, dotted, thick] ($(Q-s)!0.2!(Q-st)$) -- node[right = -3pt]{$Q^-$} (Q-st);
	
	\coordinate [right = 5pt] (Q-t) at (bb1);
	\coordinate [below = 1cm] (Q-ts) at (Q-t);
	\draw[[-{Latex[length=3mm]}, dotted, thick] (Q-ts) -- node[right = -3pt]{$Q^-$} (Q-t);
	
	\coordinate [below = 1cm] (Pls) at (rba0);
	\coordinate[label=below:{$\calP_{k-1}$}] () at (Pls);
	\draw[[-{Latex[length=3mm]}, thick] (Pls) -- (rba0);
	\draw[[-{Latex[length=3mm]}, thick] (rba0) -- node[below right = -3pt and -3pt]{$\calQ_{k}$} (rbb0) -- (rba-1) -- (rbb-1);
	\coordinate [above = 1.5cm] (Plt) at (rbb-1);
	\coordinate[label=above:{$\calP_k$}] () at (Plt);
	\draw[[-{Latex[length=3mm]}, thick] (rbb-1) -- node[left]{(q1-1)} node[right]{(q1-2)} (Plt);
	
	\coordinate[label=right:{$\alpha^- = \alpha^*$}] () at (ba-2);
	
	\end{tikzpicture}
	\caption{
		Modification in Case~1 of \cref{subsubsec:path};
		the definitions of all lines and paths are the same as in \cref{fig:cycle}.
	}
	\label{fig:path case 1}
\end{figure}

Consider outer space-walks $\calP_\ell$ $(\ell \leq m)$ having the following property.
\begin{itemize}
	\item[(p1)] $P_\ell$ meets $R \cap P_m$.  
\end{itemize}
Note that $\calP_m$ always satisfies (p1).

We next classify previous inner space-walks $Q_k$ $(k \leq m-1)$
meeting $R$ (when the elimination enters $Q^-$).
In this case, it holds $\alpha^* = \alpha^-$.
\begin{itemize}
	\item[(q1-1)] $\calQ_k$ meets $R \cap Q^-$
	and
	$\calP_{k-1} \triangleright Q^-[\alpha(\calQ_k), \alpha^-]$ is an outer space-walk for $I'$
	such that its last space is different from $\ker_{I'}(\alpha^-) \neq \{0\}$.
	\item[(q1-2)] $\calQ_k$ meets $R \cap Q^-$ such that it is not the case of (q1-1). 
\end{itemize}

Choose $\calP_\ell$ with minimum $\ell$ satisfying (p1).
Also choose $Q_k$ with minimum $k$ satisfying (q1-1).
(If such an index does not exist, we let it to be $+ \infty$ below.)
Suppose $\ell < k$.
If $\ell < m$,
then
there is $\alpha$ in $P_\ell$
such that $\alpha$ is incident to an edge in $D$
and $\calP_\ell(\alpha]$ does not meet $D$ except $\alpha$.
Indeed,
otherwise $\calQ_\ell$ satisfies (q1-1),
a contradiction to $\ell < k$.
Define
\begin{align}\label{eq:T' path case 1}
\calT' :=
\begin{cases}
\calP_0 \circ \calQ_1 \circ \cdots \circ \calQ_\ell \circ \left(\calP_\ell(\alpha] \triangleright R[\alpha, \alpha^*]\right) & \text{if $\ell < m$},\\
\calP_0 \circ \calQ_1 \circ \cdots \circ \calQ_{m-1} \circ \left(\calP_{m-1} \triangleright \left(Q^+[\alpha(\calQ_m), \alpha^+] \circ R\right)\right) & \text{if $\ell = m$},
\end{cases}
\end{align}
where $R[\alpha, \alpha^*]$ and $Q^+[\alpha(\calQ_m), \alpha^+] \circ R$ form truncated outer-walk
since all edges in $I' \cap (Q^+[\alpha(\calQ_m), \alpha^+] \circ R)$ are isolated rank-2 edges.
If $k \leq \ell$, then
define
\begin{align}\label{eq:T' path case 2}
\calT' :=
\calP_0 \circ \calQ_1 \circ \cdots \circ \calQ_{k-1} \circ (\calP_{k-1} \triangleright Q^-[\alpha(\calP_{k-1}), \alpha^-]),
\end{align}
where $Q^-[\alpha(\calP_k), \alpha^-]$ forms a truncated outer-walk.
\begin{lem}\label{lem:T' path case12}
	If $\ell < k$,
	then $\calT'$ of the form~\eqref{eq:T' path case 1} is an augmenting space-walk for $I'$.
	If $k \geq \ell$,
	then $\calT'$ of the form~\eqref{eq:T' path case 2} is an augmenting space-walk for $I'$.
\end{lem}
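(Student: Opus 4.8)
The plan is to check, for each of the two forms of $\calT'$, the three requirements of an augmenting space-walk for $I'$: condition ($\Ai$), the fact that $\calT'$ is a compatibly-concatenated sequence of outer and inner space-walks for $I'$, and condition ($\Al$). Condition ($\Ai$) is immediate, exactly as in \cref{lem:T' cycle case 1}: the prefix $\calP_0$ and its initial space $Y(\calP_0)$ are untouched by the passage from $I$ to $I'$, and $\beta(\calP_0)$ is incident to no modified edge, so $\ker_{I'}(\beta(\calP_0)) = \ker_I(\beta(\calP_0)) = Y(\calP_0) \ne \{0\}$. It is also straightforward that the final underlying walk of $\calT'$ --- namely $R[\alpha,\alpha^*]$, or $Q^+[\alpha(\calQ_m),\alpha^+] \circ R$, or $Q^-[\alpha(\calP_{k-1}),\alpha^-]$ --- is a truncated outer walk for $I'$: the $-$-edges of $Q^+$ are rank-$2$ by the inner-walk condition and become isolated rank-$2$ edges in $I'$ once the $+$-edges of $Q^+$ are deleted, while the edges of $R$ lying in $I'$ are precisely the ones that the elimination did not remove, and those too are isolated rank-$2$.

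For the compatible-concatenation part I would argue along the lines of items (1)--(4) in the proof of \cref{lem:T' cycle case 1}, re-bracketing every previously used outer space-walk $\calP_{\ell'}$ and inner space-walk $\calQ_{\ell'}$ into outer and inner space-walks for $I'$. The genuinely new feature here is that the elimination may delete a set $D$ of edges lying along $P_m \cup Q^-$, so a previous walk sharing edges with $R$ must be re-examined; this is exactly what the classification of outer walks by (p1) and of inner walks by (q1-1)/(q1-2) is designed to control. Using ($\Ni$) and ($\No$) together with the minimality of $\ell$ (no $P_{\ell'}$ with $\ell' < \ell$ meets $R \cap P_m$) and of $k$ (no $\calQ_{k'}$ with $k' < k$ satisfies (q1-1)), one checks that the portion of $\calT'$ preceding its final outer space-walk is a valid compatible concatenation for $I'$: walks of type (q1-2) and outer walks sharing only rank-$2$ edges with $P_m$ get absorbed into longer outer or inner space-walks just as in items (2)--(4) of \cref{lem:T' cycle case 1}, and the resulting sequence concatenates compatibly with the final piece at its junction vertex ($\alpha$, or $\alpha(\calQ_m)$, or $\alpha(\calP_{k-1})$).

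It remains to establish ($\Al$). For $\calT'$ of the form~\eqref{eq:T' path case 2} there is nothing to prove: since $\alpha(\calQ_k) = \alpha(\calP_{k-1})$, the final piece $\calP_{k-1} \triangleright Q^-[\alpha(\calP_{k-1}),\alpha^-]$ is, by the very definition of (q1-1), an outer space-walk for $I'$ whose last space differs from $\ker_{I'}(\alpha^-) \ne \{0\}$. For $\calT'$ of the form~\eqref{eq:T' path case 1} I would argue as follows. Whatever the value of $\ell$, the last space $Z$ reached at $\alpha^*$ is the value obtained by $\perp$-propagating $\ker_I(\alpha(\calP_m))$ backward along $P_m$ (and then along $Q^-$, if $\alpha^* \in Q^-$) to $\alpha^*$: when $\ell < m$ this is because ($\No$) forces the space carried by $\calP_\ell$ at the chosen vertex $\alpha$ to coincide with the one assigned there by $P_m \triangleleft \ker_I(\alpha(\calP_m))$, and propagation along $R[\alpha,\alpha^*]$ then carries it to $\alpha^*$; when $\ell = m$ this $\calT'$ extends the construction~\eqref{eq:T' cycle case 1} of \cref{lem:T' cycle case 1} by the additional stretch $R$, and the argument there already pins down the space reaching $\alpha^+$, after which $R$ propagates it to $\alpha^*$. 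To conclude, I would use a bijectivity argument: the segment $P_m[\alpha^*,\alpha(\calP_m)]$ consists entirely of rank-$2$ edges, since the elimination halts just before the first rank-$1$ edge, so $\perp$-propagation along it is a bijection of one-dimensional subspaces; this bijection carries $Z$ to $\ker_I(\alpha(\calP_m))$ and carries the space that $\calP_m$ itself carries at $\alpha^*$ --- which equals $\ker_{I'}(\alpha^*)$, because an outer space-walk crossing the rank-$1$ edge at $\alpha^*$ where the elimination halted must land on the corresponding left kernel --- to $X(\calP_m)$. Since $\calT$ satisfies ($\Al$) for $I$ we have $X(\calP_m) \ne \ker_I(\alpha(\calP_m))$, whence $Z \not\supseteq \ker_{I'}(\alpha^*) \ne \{0\}$, which is ($\Al$) for $I'$.

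The main obstacle will be the compatible-concatenation bookkeeping of the second paragraph: verifying that minimality of $\ell$ and $k$, in combination with ($\Ni$) and ($\No$), really does force every earlier outer and inner space-walk to be re-bracketable for $I'$ and the result to splice compatibly onto the new final piece, in particular handling walks that partly reverse $P_m$ and partly run through $Q^-$ and their interaction with the eliminated set $D$. By contrast, the propagation steps behind ($\Al$) are routine once the bijectivity of $\perp$ along rank-$2$ segments and the rank-$1$-endpoint identity are isolated, and the truncated-outer-walk checks are immediate from the structure of $I'$.
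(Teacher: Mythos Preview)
Your overall structure matches the paper's: ($\Ai$) is trivial, ($\Al$) for~\eqref{eq:T' path case 2} is immediate from the definition of (q1-1), and the compatible-concatenation bookkeeping reduces to the analysis in the proof of \cref{lem:T' cycle case 1} together with the minimality of $\ell$ and $k$. Your treatment of the compatibility is sketchy but points in the right direction; the paper makes explicit only one new point, namely that any $\calQ_{k'}$ of type (q1-2) surviving in $\calT'$ must have $X(\calP_{k'-1}) = U_{\alpha(\calP_{k'-1})}^+$ (else it would satisfy (q1-1), contradicting the minimality of $k$), and then item~(4) of \cref{lem:T' cycle case 1} applies.

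There is, however, a genuine gap in your ($\Al$) argument for~\eqref{eq:T' path case 1}. Your bijectivity argument presupposes $\alpha^* \in P_m$: you invoke ``the segment $P_m[\alpha^*,\alpha(\calP_m)]$'' and ``the space that $\calP_m$ itself carries at~$\alpha^*$'', neither of which is defined when the elimination runs through all of $P_m$ and into $Q^-$, so that $\alpha^* = \alpha^-$. This case is real here (we are in Case~1, so $\deg_I(\beta^*)=2$ and $Q^-$ is nonempty), and the bijectivity argument cannot be rescued: the stretch from $\alpha^- $ through $Q^-$ to $\beta^*$ may contain rank-$1$ edges among the $\alpha_i\beta_i$, so $\perp$-propagation along it is not injective. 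The paper handles this case separately. One first observes that the propagated space of $X \triangleright R[\alpha,\alpha^*]$ at $\beta^*$ equals $Y^\vee$, the initial space of $P_m \triangleleft \ker_I(\alpha(\calP_m))$; since $Y^\vee \neq V_{\beta^*}^-$ (established before the case split), continuing the propagation through the reversed $Q^-$ and using the valid-labeling orthogonalities yields a final space at $\alpha^-$ different from $U_{\alpha^-}^+$, hence different from $\ker_{I'}(\alpha^-) \supseteq U_{\alpha^-}^+$. You should add this second subcase to complete the argument.
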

\begin{proof}
	($\Ai$) is obvious.
	If the elimination from $\alpha(\calP_m)$ stops within $P_m$,
	then $\ker_{I'}(\alpha^*) (\neq \{0\})$ is the space of $\calP_m$ at $\alpha^*$.
	Otherwise we have $\alpha^*= \alpha^-$ and hence $\ker_{I'}(\alpha^*) \supseteq U_{\alpha^-}^+$.

	We first consider the case of $\ell < k$, particularly, $\ell < \min(m, k)$;
	the case of $\ell = m < k$ is similar.
	By ($\No$),
	the space $X$ of $\calP_\ell$ at $\alpha$ is equal to the propagated space of $P_m \triangleleft \ker_I(\alpha(\calP_m))$ at $\alpha$.
	If the elimination from $\alpha(\calP_m)$ stops within $P_m$,
	then
	the last space $X^*$ of $X \triangleright R[\alpha, \alpha^*]$ is
	the propagated space of $P_m \triangleleft \ker_I(\alpha(\calP_m))$ at $\alpha^*$.
	By \cref{lem:diff}~(1),
	$\calP_\ell(\alpha] \triangleright R[\alpha, \alpha^*]$ forms an outer space-walk for $I'$.
	Also we have $X^* \neq \ker_{I'}(\alpha^*)$.
	Hence $\calT'$ of the form~\eqref{eq:T' path case 1} satisfies ($\Al$).
	If the elimination enters $Q^-$,
	then
	the propagated space of $X \triangleright R[\alpha, \alpha^*]$ at $\beta^*$
	is $Y^\vee$.
	By $Y^\vee \neq V_{\beta^*}$,
	the last space of $X \triangleright R[\alpha, \alpha^*]$
	is different from $U_{\alpha^-}^+$.
	Thus $\calT'$ of the form~\eqref{eq:T' path case 1} satisfies ($\Al$).

	For the case of $k \leq \ell$,
	the assumption of (q1-1) immediately implies
	that $\calT'$ of the form~\eqref{eq:T' path case 1} satisfies ($\Al$).

	Finally we see previous inner space-walks $Q_{k'}$ satisfying (q1-2),
	where $k' \leq \ell$ if $\ell < k$, and $k' < k$ if $k \leq \ell$.
	If the last space $X(\calP_{k'-1})$ is different from $U_{\alpha(\calP_{k'-1})}^+$,
	then $\calQ_{k'}$ satisfies (q1-1),
	which contradicts the minimality of $k$.
	Hence we have $X(\calP_{k'-1}) = U_{\alpha(\calP_{k'-1})}^+$.
	By the same argument as (4) in \cref{lem:T' cycle case 1},
	if $Q_{k'}$ is included in $Q^-$,
	then
	$\calP_{k'-1} \circ \calQ_{k'} \circ \calP_{k'}$ is a single outer space-walk for $I'$.
	If $Q_{k'}$ passes through the initial vertex $\alpha^-$ of $Q^-$,
	then
	$\calP_{k'-1} \circ \calQ_{k'}(\alpha^-]$
	is a single outer space-walk compatible with the inner space-walk $\calQ_{k'}[\alpha^-)$.
	Hence $\calT'$ is a compatibly-concatenated space-walk.
	
	Summarizing, $\calT'$ is an augmenting space-walk.
\end{proof}

Let $(I, \calT) \leftarrow (I', \calT')$.
This update can be done in $O(|E|)$ time,
since
we can find $\calP_{\ell}$ satisfying (p1) or $\calQ_{k}$ satisfying (q1-1),
and compute several front-propagation in $O(|E|)$ time.
By the same argument as in \cref{lem:Delta},
the quantity of $\theta$ decrease by at least $|Q_m|$.
Recall that we execute the transformations in Case~2 of \cref{subsubsec:path} after the update~\eqref{eq:T' Ninner}.
Hence
the difference of $\theta$ between the one before~\eqref{eq:T' Ninner} and the one after the transformations in Case~2 of \cref{subsubsec:path}
is bounded by $\Delta - |Q_m|$;
$\theta$ strictly decreases by \cref{lem:Delta}.
Return to the initial stage (\cref{subsec:initial stage}).

\paragraph{Case~2: Only (B) holds.}
Let $\calT'$ be defined as~\eqref{eq:T' cycle case 1}.
By $\deg_{I'}(\beta^*) = 1$,
the elimination operation 
is applied from $\beta^*$ 
if the edge incident to $\beta^*$ in $I'$ is rank-2.
If such elimination does not occur, 
then the procedure is precisely the same as Case~1 of \cref{subsubsec:path}.
Suppose that such elimination occurs.
There are two cases:
the elimination from $\beta^*$ stops within $P_m$
and the elimination goes through $P_m$ to 
enter the rank-1 component $C'$ (for $I$) 
containing $\alpha(\calP_m)$.
The latter occurs precisely when all edges of $P_m$ are rank-2 and $\alpha(\calP_m)$ 
has degree one in $I$.
We will deal with the both cases simultanously.

Let $D$ denote the set of removed edges by the elimination from $\beta^*$.
Our main analysis concerns the situations 
where previous inner and outer space-walks meet edges in $D$.
Consider the path $P_m \cup C'$,
where
$C' = \emptyset$ if $\deg_I(\alpha(\calP_m)) = 0$.
Let $\beta^0$ be the farthest vertex from $\beta^*$ in $P_m \cup C'$
such that it is incident to a deleted edge in $D$.
Let $L$ be the subpath in $P_m \cup C'$ from $\beta^0$ to $\beta^*$.
Note here that its direction is the reverse of $P_m$,
and $\ker_I(\beta^0) \neq \{0\}$.
We may assume that all edges in $D \cap L$ are $+$-edges.

Consider previous outer space-walks $\calP_\ell$ $(\ell \leq m-1)$ having the following property. 
\begin{itemize}
	\item[(p2)] $P_\ell$ meets $L \cap P_m$ and does not end at $\alpha(\calP_m)$, 
	i.e., there is a vertex $\beta$ in $P_m$ such that $P_\ell[\beta)$ meets $P_m$ only at $\beta$.  
\end{itemize}
See Figure~\ref{fig:path case 2}.
\begin{figure}
	\centering
	\begin{tikzpicture}[
	node/.style={
		fill=black, circle, minimum height=5pt, inner sep=0pt,
	},
	I1/.style={
		line width = 3pt,
		decorate,
		decoration={snake, amplitude=.4mm,segment length=2.5mm,post length=0mm}
	},
	I2/.style={
		line width = 3pt
	},
	ID1/.style={
		line width = 3pt,
		blue,
		decorate,
		decoration={snake, amplitude=.4mm,segment length=2.5mm,post length=0mm}
	},
	ID2/.style={
		line width = 3pt,
		blue
	},
	NI/.style={
	},
	NIA2/.style={
		red,
		semithick
	},
	NIA1/.style={
		red,
		semithick,
		decorate,
		decoration={snake, amplitude=.3mm,segment length=1.5mm,post length=0mm}
	}
	]
	
	\def\mu{a1, a2, a3, a4, a5, a6, a7, a8, a9}
	\def\nu{b1, b2, b3, b4, b5, b6, b7, b8, b9}
	\def\bmu{ba1, ba2}
	\def\bnu{bb1, bb2}
	\def\size{1.7cm}
	\def\hight{5cm}
	\def\side{5pt}

	\nodecounter{\nu}
	\coordinate (pos);
	\foreach \currentnode in \nu {
		\node[node, below=0 of pos, anchor=center] (\currentnode) {};
		\coordinate (pos) at ($(pos)+(-\size, 0)$);
	}
	\nodecounter{\mu}
	\coordinate (pos) at ($(b1) + (-\size, -\size)$);
	\foreach \currentnode in \mu {
		\node[node, below=0 of pos, anchor=center] (\currentnode) {};
		\coordinate (pos) at ($(pos)+(-\size, 0)$);
	}

	\draw[I1] (b9) -- node[above left = 0-2pt and -5pt]{$\calQ_k$} (a9);
	\draw[ID2] (b9) -- node[left = -2pt]{$+$} (a8);
	\draw[ID1] (b4) -- (a3);
	\draw[ID1] (b8) -- (a7);
	\draw[ID2] (b7) -- (a6);
	
	\foreach \i / \j in {b5/a5, b6/a6, b7/a7, b8/a8} {
		\draw[I2] (\i) -- (\j);
	}
	\draw[I2] (b4) -- node[above left = -3pt and -3pt]{$-$} (a4);
	
	\foreach \i / \j in {b1/a1, b2/a2, b3/a3} {
		\draw[NIA2] (\i) -- (\j);
	}
	
	\foreach \i / \j in {b2/a1, b3/a2, b5/a4} {
		\draw[ID2] (\i) -- (\j);
	}
	
	\draw[ID2] (b6) -- node[right = -2pt]{$+$} (a5);

	\coordinate [label=right:{$\beta^*$}] () at (b1);
	
	\coordinate [label=below:{$\alpha(\calP_m)$}] () at (a3);
	\coordinate [label=above:{$\beta^0$}] () at (b9);
	\coordinate [label=above right:{$\beta$}] () at ($(b3) + (3pt, 0)$);
	\coordinate [label=above right:{$\beta$}] () at ($(b4) + (3pt, 0)$);
	\coordinate [label=above right:{$\beta$}] () at ($(b7) + (3pt, 0)$);
	\coordinate [label=above right:{$\beta$}] () at ($(b8) + (3pt, 0)$);
	
	\foreach \i in {a1, a2, a3, a4, a5, a6, a7, a8, a9, b1, b2, b3, b4, b5, b6, b7, b8, b9} {
		\coordinate [left = \side] (l\i) at (\i);
		\coordinate [right = \side] (r\i) at (\i);
	}
	
	\draw[[-{Latex[length=3mm]}, dashed, thick] ($(rb1)!0.1!(ra1)$) -- node[below right = -5pt and -2pt]{$\calP_m$} ($(rb1)!0.6!(ra1)$);
	\draw[[-{Latex[length=3mm]}, dashed, thick] ($(rb3)!0.5!(ra3)$) -- node[below right = 0 and -4pt]{$\calP_m$} (ra3);
	
	\coordinate [label=below:{$\calP_\ell$}] (Pms) at ($(ra1) + (0, -1.5cm)$);
	\coordinate [above = 1.5cm] (Pmt) at (rb3);
	\draw[[-{Latex[length=3mm]}, dashed, thick] (Pms) -- (ra1) -- (rb2) -- (ra2) -- (rb3) -- node[right]{(p2)} (Pmt);

	\coordinate [above = 1.5cm] (p0P0t) at (lb3);
	\draw[[-{Latex[length=3mm]}, thick] (lb9) -- (la8) -- (lb8) -- (la7) -- (lb7) -- (la6) -- (lb6) -- (la5) -- (lb5) -- (la4) -- (lb4) -- (la3) -- (lb3) -- (p0P0t);

	\coordinate [above = 1.5cm] (q0Qkt) at (rb8);
	\coordinate [label=above:$\calP_k$] () at (q0Qkt);
	\draw[[-{Latex[length=3mm]}, thick] (ra9) -- (rb9) -- (ra8) -- (rb8);
	\draw[[-{Latex[length=3mm]}, thick] (rb8) -- node[left]{(q2-0)} (q0Qkt);

	\coordinate [label=below:$\calP_{k-1}$] (q1Pk-1s) at ($(ra5) + (0, -1.5cm)$);
	\draw[[-{Latex[length=3mm]}, dashed, thick] (q1Pk-1s) -- (ra5);
	\draw[[-{Latex[length=3mm]}, dashed, thick] (ra5) -- (rb6) -- node[below = 2pt]{$\calQ_k$} (ra6) -- (rb7);
	\coordinate [label=above:$\calP_k$] (q1Pkt) at ($(rb7) + (0, 1.5cm)$);
	\draw[[-{Latex[length=3mm]}, dashed, thick] (rb7) -- node[right]{(q2-1)} (q1Pkt);
	\coordinate [above = 1.5cm] (q1Pkt) at (lb7);
	\draw[[-{Latex[length=3mm]}, thick] (lb7) -- (q1Pkt);

	\coordinate [label=below:$\calP_{k-1}$] (q23Pk-1s) at ($(ra4) + (0, -1.5cm)$);
	\draw[[-{Latex[length=3mm]}, thick] (q23Pk-1s) -- (ra4);
	\draw[[-{Latex[length=3mm]}, thick] (ra4) -- node[below = 2pt]{$\calQ_k$} (rb4);
	\coordinate [label=above:$\calP_k$] (q23Pkt) at ($(rb4) + (0, 1.5cm)$);
	\draw[[-{Latex[length=3mm]}, thick] (rb4) -- node[right]{(q2-2)} (q23Pkt);
	\coordinate [above = 1.5cm] (q23Pkt) at (lb4);
	\draw[[-{Latex[length=3mm]}, thick] (lb4) -- node[left]{(q2-3)} (q23Pkt);

	\coordinate [right = 8pt] (Rs) at (b9);
	\coordinate [below = 1.5cm] (Rst) at (Rs);
	\draw[[-{Latex[length=3mm]}, dotted, thick] (Rs) -- node[right = -3pt]{$L$} ($(Rs)!0.7!(Rst)$);
	
	\coordinate [left = 5pt] (Rt) at (b1);
	\coordinate [left = 5pt] (Rts) at (a1);
	\draw[[-{Latex[length=3mm]}, dotted, thick] ($(Rts)!0.4!(Rt)$) -- node[above = 0pt]{$L$} (Rt);

	\end{tikzpicture}
	\caption{
		Modification in Case~2 of \cref{subsubsec:path};
		the definitions of all lines and paths are the same as in Figure~\ref{fig:cycle}.
	}
	\label{fig:path case 2}
\end{figure}
In the case where the elimination stops within $P_m$, 
the outer space-walk $P_\ell$ meeting $D$ always satisfies (p).  

We next classify previous inner space-walks $\calQ_k$ $(k \leq m-1)$
meeting $L$ (when the elimination enters the rank-1 component $C'$ of $I$) as follows:
\begin{itemize}
	\item[(q2-0)] $\calQ_k$ passes through $\beta^0$ to enter $L$ 
	and leaves $L$ at $\beta$ with label $V_\beta^+$.
	\item[(q2-1)] $\calQ_k$ starts with $+$-edge in $L$ and leaves $L$ at $\beta$ with label $V_\beta^-$.
	\item[(q2-2)] $\calQ_k$ starts with $-$-edge in $L$ and leaves $L$ at $\beta$ with label $V_\beta^+$
	such that $\calP_{k-1} \triangleright (Q_k \circ P_k)$ is an outer space-walk for $I'$
	and compatibly concatenated with $\calQ_{k+1}$. 
	\item[(q2-3)] $\calQ_k$ starts with $-$-edge in $L$ and leaves $L$ at $\beta$ with label $V_\beta^+$
	such that it is not the case of (q2-2). 
\end{itemize}
Note that if $P_\ell$ ends at $\alpha(\calP_m)$, 
then $\ell < m-1$ and the next $Q_{\ell+1}$ is in the case of (q2-1).

Choose $P_\ell$ with maximum $\ell$ satisfying (p2).
Also choose $Q_k$ with maximum $k$ satisfying (q2-1) or (q2-3).
(If such an index does not exist, we let it to be $- \infty$ below.)
If $\ell \geq k$ and $\ell > -\infty$, then
consider the vertex $\beta$ in (p2), and
replace the prefix $\calP_0 \circ \calQ_1 \circ \cdots \circ \calQ_\ell \circ \calP_\ell$ of $\calT'$
by 
\begin{align}\label{eq:after1}
\left(\ker_{I'} (\beta^0) \triangleright L(\beta]\right) \circ \calP_\ell[\beta).
\end{align}
If $\ell < k$, then consider the vertex $\beta$ in (q2-1) or (q2-3), 
replace the prefix $\calP_0 \circ \calQ_1 \circ \cdots \circ \calQ_k \circ \calP_k$ of $\calT'$
by 
\begin{align}\label{eq:after2}
\ker_{I'} (\beta^0) \triangleright (L(\beta] \circ P_k).
\end{align}

For each inner space-walk $\calQ_{k'}$ satisfying (q2-2) with $k' \geq \max(\ell, k)$, 
replace the subsequence $\calP_{k'-1} \circ \calQ_{k'} \circ \calP_{k'}$
by a single outer space-walk
$\calP_{k'-1} \triangleright (Q_{k'} \circ P_{k'})$.
\begin{lem}\label{lem:T' path case 2}
	${\cal T}'$ is an augmenting space-walk for $I'$.
\end{lem}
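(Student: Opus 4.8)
The plan is to verify, in order, the three defining properties of an augmenting space-walk for $I'$: the initial condition ($\Ai$), the terminal condition ($\Al$), and the compatibility of the concatenation, following the template of the proofs of \cref{lem:T' cycle case 1} and \cref{lem:T' path case12}. If the elimination from $\beta^*$ does not occur (i.e.\ the edge of $I'$ incident to $\beta^*$ is rank-1), then no surgery is needed on the $\beta^*$-side of $\calT'$ and the argument reduces verbatim to Case~1 of \cref{subsubsec:path}; so I would assume from now on that this elimination takes place, fixing its removed set $D$, the farthest affected vertex $\beta^0$, and the reversed subpath $L$ from $\beta^0$ to $\beta^*$. One observation to record first is that $\alpha^+$ lies in $Q^+$, whose $+$-edges have already been deleted when $I'$ was formed by \eqref{eq:I' cycle case 1}; hence $\alpha^+$ is disconnected from $\beta^*$ in $I'$, is untouched by the elimination from $\beta^*$, and the whole tail of $\calT'$ coincides with that of \eqref{eq:T' cycle case 1}.

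For ($\Ai$): when neither $\ell$ nor $k$ triggers a reconstruction of the prefix, $\calT'$ still begins with $\calP_0$ and ($\Ai$) is inherited exactly as in \cref{lem:T' cycle case 1}; otherwise $\calT'$ begins with the front-propagation $\ker_{I'}(\beta^0) \triangleright L(\beta]$ (when $\ell \ge k > -\infty$) or $\ker_{I'}(\beta^0) \triangleright (L(\beta] \circ P_k)$ (when $\ell < k$), and I would check that this is a genuine outer space-walk for $I'$: the matching edges occurring in $L(\beta]$, and those of $P_k$ (which was already an outer walk), are, after the elimination, isolated rank-2 edges of $I'$; and $\ker_{I'}(\beta^0) \neq \{0\}$ because $\beta^0$ is incident in $I'$ to at most one rank-1 edge, which also secures the required noninclusion into the right kernel of the first edge. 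For ($\Al$): since the reconstructions only alter a prefix, the last outer space-walk of $\calT'$ is still $\calP_{m-1} \triangleright Q^+[\alpha(\calQ_m), \alpha^+]$ as in \eqref{eq:T' cycle case 1}, so ($\Al$) follows by the computation already carried out in \cref{lem:T' cycle case 1}: the compatibility $X(\calP_{m-1}) \neq U^-_{\alpha(\calQ_m)}$ and \cref{lem:diff}~(2) force the last space to differ from $U^-_{\alpha^+}$, hence from $\ker_{I'}(\alpha^+) \supseteq U^-_{\alpha^+}$, while ($\Al$) of the original $\calT$ for $I$ gives the remaining nonzero-noninclusion against $\ker_{I'}(\alpha^+)$ itself.

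The substance of the proof is compatibility. Passing from $I$ to $I'$ combines three operations --- adding $P_m$, deleting the $+$-edges of $Q^+$, and the elimination $D$ --- so some of $\calP_0, \calQ_1, \dots$ need not be space-walks for $I'$. I would re-use the bookkeeping of \cref{lem:T' cycle case 1}~(1)--(4), together with the sign-swap on the component $C'$ at $\alpha(\calP_m)$ from \cref{lem:I' path}, to handle the first two operations, and then absorb $D$ using the classification (p2) of the previous outer space-walks and (q2-0)--(q2-3) of the previous inner space-walks. The key point is that, with $\ell$ maximal satisfying (p2) and $k$ maximal satisfying (q2-1) or (q2-3), the reconstructed prefix \eqref{eq:after1} or \eqref{eq:after2} attaches compatibly at the junction vertex $\beta$: condition ($\No$) forces the space carried by $\calP_\ell$ (resp.\ $\calP_k$) at $\beta$ to equal the space the propagation $\ker_{I'}(\beta^0) \triangleright L(\beta]$ reaches at $\beta$, so the pieces glue; and the extremality of $\ell$ and $k$, together with ($\Ni$), rules out any earlier outer or inner space-walk breaking against $D$. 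The inner space-walks of type (q2-2) with index at least $\max(\ell, k)$ are precisely those that survive the deletion in a reversed orientation, and replacing $\calP_{k'-1} \circ \calQ_{k'} \circ \calP_{k'}$ by the single outer space-walk $\calP_{k'-1} \triangleright (Q_{k'} \circ P_{k'})$ is justified as in step~(4) of \cref{lem:T' cycle case 1}; the (q2-0) walks get absorbed into longer walks, and the unchosen (q2-1)/(q2-3) walks lie entirely in the discarded prefix.

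The step I expect to be the main obstacle is exactly this gluing at $\beta$ and the proof that nothing breaks "earlier" than the chosen $\ell$ and $k$: one has to reconcile the direction reversal of $L$ against $P_m$, the possibility that the elimination runs through all of $P_m$ into $C'$ (so that $\beta^0 \in C'$ after the signs on $C'$ have been flipped and $L$ contains genuine $+$-edges of $C'$), and the partial overlaps of $D$ with inner space-walks $\calQ_{k'}$ that meet $L$ only along a subwalk. Keeping the resulting object irredundant and a single compatible concatenation after all these local surgeries is where the care lies. Once compatibility is secured, combining it with the verifications of ($\Ai$) and ($\Al$) above shows that $\calT'$ is an augmenting space-walk for $I'$, as required.
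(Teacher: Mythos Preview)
Your outline tracks the paper's proof closely: verify ($\Al$), ($\Ai$), and compatibility by re-using the case analysis (1)--(4) of \cref{lem:T' cycle case 1} together with the classification (p2), (q2-0)--(q2-3). For ($\Al$), for (p2), and for the inner cases (q2-0), (q2-1), (q2-2) your sketch matches what the paper does.

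There is, however, a genuine gap in the gluing when $\ell < k$ and $\calQ_k$ is of type (q2-3). You claim that ($\No$) makes the space carried by $\calP_k$ at the junction vertex $\beta$ coincide with the last space of $\ker_{I'}(\beta^0)\triangleright L(\beta]$, so ``the pieces glue.'' This fails on two counts. First, ($\No$) speaks only about vertices lying in $\calP_m$, whereas in (q2-3) the vertex $\beta$ sits inside the rank-1 component $C'$ at $\alpha(\calP_m)$, not in $P_m$. Second, and more seriously, the two spaces at $\beta$ do \emph{not} coincide: along $L$ the front-propagation from $\ker_{I'}(\beta^0)$ carries the label $V_\beta^-$ (the edges of $D\cap L$ are $+$-edges, so $L$ propagates the $-$-labels), while by the very definition of (q2-3) the space $Y(\calQ_k)=Y(\calP_k)$ at $\beta$ is $V_\beta^+$. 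Hence the reconstructed prefix \eqref{eq:after2} is really $\bigl(\ker_{I'}(\beta^0)\triangleright L(\beta]\bigr)\circ\bigl(V_\beta^-\triangleright P_k\bigr)$, a \emph{different} outer space-walk from the old $\calP_k$, and you must argue separately that its last space is compatible with $\calQ_{k+1}$. The paper does this by first showing $X(\calP_{k-1})\neq U_{\alpha(\calP_{k-1})}^-$, then applying \cref{lem:diff} to conclude that the last space of $V_\beta^-\triangleright P_k$ differs from that of $\calP_{k-1}\triangleright(Q_k\circ P_k)$, and finally using the defining failure in (q2-3) (namely, that $\calP_{k-1}\triangleright(Q_k\circ P_k)$ is \emph{not} compatible with $\calQ_{k+1}$) to deduce that the new last space \emph{is} compatible. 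Your sketch does not contain this mechanism, and without it the compatibility of $\calT'$ is not established in the (q2-3) case.
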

\begin{proof}
	Assume that the elimination operation is applied from $\beta^*$.
	By the same argument as in \cref{lem:T' cycle case 1},
	($\Al$) holds.
	Consider ($\Ai$).
	If we do not replace the prefix $\calP_0 \circ \calQ_1 \circ \cdots \circ \calQ_\ell \circ \calP_\ell$ of $\calT'$
	in the update, i.e., $\ell = k = -\infty$,
	obviously ($\Ai$) holds.
	Otherwise we replace it
	by either~\eqref{eq:after1} or~\eqref{eq:after2}.

	Suppose the former case ($\ell \geq k$ and $\ell > -\infty$).
	By ($\No$),
	the subspace of $\calP_{\ell}$ at $\beta$ is equal to the propagated space of $P_m \triangleleft \ker_I(\alpha(\calP_m))$ at $\beta$.
	Furthermore one can see that
	the last space of $\ker_{I'}(\beta^0) \triangleright L(\beta]$ coincides with the propagated space of $P_m \triangleleft \ker_I(\alpha(\calP_m))$ at $\beta$ as follows.
	If $L$ is included in $P_m$,
	$\ker_{I'}(\beta^0)$ is the propagated space of $P_m \triangleleft \ker_I((\calP_m))$ at $\beta^0$,
	which implies the desired identity.
	Suppose that $L$ properly includes $P_m$.
	Since $L$ is a subpath in a rank-1 connected component,
	$\ker_{I'}(\beta^0) = V_{\beta^0}^-$ if $\deg_{I'}(\beta^0) = 1$,
	and $\ker_{I'}(\beta^0) = V_{\beta^0}$ if $\deg_{I'}(\beta^0) = 0$.
	The propagated space of $\ker_{I'}(\beta^0) \triangleright L$ at $\gamma \in L$
	is equal to $V_{\beta'}^-$ if $\gamma = \beta'$ (except for $\gamma = \beta^0$ in case of $\deg_{I'}(\beta^0) = 0$)
	and to $U_\alpha^+$ if $\gamma = \alpha'$.
	Hence the propagated space of $\ker_{I'}(\beta^0) \triangleright L$ at $\alpha(\calP_m)$ is equal to $U_{\alpha(\calP_m)}^+$,
	implying the desired identity.
	Thus $\left(\ker_{I'}(\beta^0) \triangleright L(\beta']\right) \circ P_{\ell}[\beta)$ forms an outer space-walk for $I'$,
	implying ($\Ai$).
	In addition, $\left(\ker_{I'}(\beta^0) \triangleright L(\beta']\right) \circ P_{\ell}[\beta)$
	is clearly compatible with $\calQ_\ell$.
	
	Suppose the latter case ($\ell < k$).
	If $\calQ_k$ satisfies (q2-1),
	then the propagated space of $\ker_{I'}(\beta^0) \triangleright L$ at $\beta$
	is $V_{\beta'}^-$ by the above argument,
	which coincides with the subspace of $\calQ_k$ at $\beta$.
	Hence $\ker_{I'} (\beta^0) \triangleright (L(\beta] \circ P_k)$ forms a single outer space-walk for $I'$.
	In addition, since the last space of $\ker_{I'} (\beta^0) \triangleright (L(\beta] \circ P_k)$
	coincides with that of $\calP_k$,
	$\ker_{I'} (\beta^0) \triangleright (L(\beta] \circ P_k)$ is compatible with $\calQ_{k+1}$.
	If $\calQ_k$ satisfies (q2-3),
	then we have $X(\calP_{k-1}) \neq U_{\alpha(\calP_{k-1})}^-$.
	Indeed, otherwise the last space of $\calP_{k-1}$ coincides with $Y(\calQ_k)$,
	which contradicts the compatibility of $(\calQ_k, \calP_k)$.
	Thus, by \cref{lem:diff},
	$\ker_{I'} (\beta^0) \triangleright (L(\beta] \circ P_k) = \left(\ker_{I'} (\beta^0) \triangleright L(\beta]\right) \circ (V_\beta^- \triangleright P_k)$
	forms a single outer space-walk for $I'$.
	In addition,
	the last space of $\ker_{I'} (\beta^0) \triangleright (L(\beta] \circ P_k)$
	is different from that of $\calP_{k-1} \triangleright (Q_k \circ P_k)$.
	implying that $\ker_{I'} (\beta^0) \triangleright (L(\beta] \circ P_k)$ is compatible with $\calQ_{k+1}$.
	
	We next show that $\calT'$ is a compatibly-concatenated space-walk for $I'$.
	We have already seen the compatibility between the first outer space-walk and the first inner space-walk in $\calT'$ above.
	We
	have already dealt in (3) and (4) in \cref{lem:T' cycle case 1} with the cases where some $Q_{k'}$ with $\max(\ell, k) < k' < m$ meets deleted edges in $Q^+$
	and where some $\calP_{k'}$ with $\max(\ell, k) < k' < m$ meets undeleted edges in $P_m$,
	respectively:
	$\calP_{k'-1} \circ \calQ_{k'} \circ \calP_{k'}$
	forms a concatenation of outer and inner space-walks for $I'$ in the former case,
	and
	$\calP_{k'} \circ \calQ_{k'+1}$
	forms a concatenation of outer and inner space-walks for $I'$ in the latter case.
	
	Thus
	it suffices to see previous inner space-walks $\calQ_{k'}$ with $\max(\ell, k) < k' < m$
	satisfying (q2-0) or (q2-2).
	Suppose that $\calQ_{k'}$ satisfies (q2-0).
	Then, by the same argument as (3) in \cref{lem:T' cycle case 1},
	$\calQ_{k'}(\beta^0]$ remains an inner space-walk
	and $\calQ_{k'}[\beta^0) \circ \calP_{k'}$ forms an outer space-walk for $I'$.
	The compatibility of $\calQ_{k'}(\beta^0]$ and $\calQ_{k'}[\beta^0) \circ \calP_{k'}$
	clearly holds.
	Suppose that $\calQ_{k'}$ satisfies (q2-2).
	Obviously $\calP_{k'-1} \triangleright (Q_{k'} \circ P_{k'})$ forms an outer space-walk for $I'$.
	The assumption says that
	$\calP_{k'-1} \triangleright (Q_{k'} \circ P_{k'})$ is compatible with $\calQ_{k'+1}$.
\end{proof}

Let $(I, \calT) \leftarrow (I', \calT')$.
By the same argument as in Case~1,
this update can be done in $O(|E|)$ time
and
$\theta$ strictly decreases.
Return to the initial stage (\cref{subsec:initial stage}).

\paragraph{Case~3: Both (A) and (B) hold.}
If $P_m$ has a rank-1 edge,
then we can simply combine the arguments in Cases~1 and~2 above.
That is,
we first define $\calT'$ as~\eqref{eq:T' cycle case 1} if the last edge of $\calP_m$ is rank-1,
and as~\eqref{eq:T' path case 1} if the last edge of $\calP_m$ is rank-2.
Note here that, by $Q^- = \emptyset$,
there is no inner space-walks in $\calT'$ satisfying (q1-1) or (q1-2) in Case~1.
Let $\calR$ be the maximum prefix of $\calT'$
that coincides with a prefix of $\calT$.
Namely,
if the last edge of $\calP_m$ is rank-1, or the last edge of $\calP_m$ is rank-2 and $\ell = m$ in~\eqref{eq:T' path case 1},
then
$\calR := \calP_0 \circ \calQ_1 \circ \cdots \circ \calQ_{m-1}\circ \calP_{m-1}$,
and if the last edge of $\calP_m$ is rank-2 and $\ell < m$ in~\eqref{eq:T' path case 1},
then $\calR := \calP_0 \circ \calQ_1 \circ \cdots \circ \calQ_\ell \circ \calP_{\ell}(\alpha]$,
where the definitions of $\ell$ and $\alpha$ are given in Case~1.
Let $\ell$ denote the last index of $\calR$ again.

In addition,
suppose that there is $\calP_{\ell'}$ with $\ell' \leq \ell$
such that it satisfies (p2) with $\beta$ belonging to $\calR$ in Case~2.
Then, for such maximum $\ell'$, we replace the prefix $\calP_0 \circ \calQ_1 \circ \cdots \circ \calP_{\ell'}$ of $\calT'$
by $\left(\ker_{I'} (\beta^0) \triangleright L(\beta]\right) \circ \calP_{\ell'}[\beta)$ as~\eqref{eq:after1}.
Here the definitions of $\beta$ and $\beta^0$ are given in Case~2.
Note that, since $P_m$ has a rank-1 edge,
there is no inner space-walk in $\calT'$ satisfying (q2-0), (q2-1), (q2-2), or (q2-3) in Case~2.
By the same arguments as in the proofs of \cref{lem:T' path case 2,lem:T' path case12},
the resulting $\calT'$ is an augmenting space-walk for $I'$.

Let $(I, \calT) \leftarrow (I', \calT')$.
By the same arguments as in Cases~1 and~2,
this update can be done in $O(|E|)$ time
and
$\theta$ strictly decreases.
Return to the initial stage (\cref{subsec:initial stage}).

Suppose that $P_m$ has no rank-1 edge.
Then one can see $r(I') > r(I)$.
Thus the resulting set is a desired augmentation.
The augmentation procedure terminates.
This update can be clearly done in $O(|E|)$ time.

\subsection{$\calP_m$ is not simple}\label{subsec:not simple}
Suppose that the last outer space-walk $\calP_m$ of $\calT$
is not simple.
Let $\beta_0$ be the vertex such that
$\calP_m[\beta_0)$ contains $\beta_0$ twice
and the other vertices once;
such $\beta_0$ exists
by the assumption.
Then $\calP_m$ is of the form
\begin{align*}
\calP_m = (\dots, X_0, \alpha_0 \beta_0, Y_0, \beta_0 \alpha_1, X_1, \alpha_1 \beta_1, \dots, \alpha_k \beta_k = \alpha_0 \beta_0, Y_k, \beta_k \alpha_{k+1}, \dots),
\end{align*}
where $\beta_0 \alpha_1, \alpha_1 \beta_1, \dots$ are distinct.
Let us denote by $\calP_m^\vee$
the back-propagation $P_m \triangleleft \ker_I(\alpha(\calP_m))$,
which has been already computed in the initial stage (\cref{subsec:initial stage}).
For $i = 0,1,2,\dots,k$,
let $X_i^\vee$ and $Y_i^\vee$ be the subspaces of $\calP_m^\vee$ at $\alpha_i$ and at $\beta_i$,
respectively.
By ($\No$),
we have $X_0 = X_k^\vee$.
Consider the two cases:
all $\beta_0 \alpha_1, \beta_1 \alpha_2, \dots, \beta_{k-1} \alpha_k$ are rank-2,
and at least one of $\beta_0 \alpha_1, \beta_1 \alpha_2, \dots, \beta_{k-1} \alpha_k$ is rank-1.

\subsubsection{All $\beta_0 \alpha_1, \beta_1 \alpha_2, \dots, \beta_{k-1} \alpha_k$ are rank-2}\label{subsubsec:not simple case 1}
Define
\begin{align*}
I' := \left(I \cup \{\beta_0 \alpha_1, \beta_1 \alpha_2, \dots, \beta_{k-1} \alpha_k\}\right) \setminus \{ \alpha_k \beta_k = \alpha_0 \beta_0, \alpha_1 \beta_1, \dots, \alpha_{k-1} \beta_{k-1} \}.
\end{align*}
It is clear that $I'$ is a q-matching with $r(I') = r(I)$.

We then modify $\calT$ to obtain an augmenting space-walk $\calT'$ for $I'$
as follows.
Let $\calP_{\ell}$
be the first outer space-walk in $\calT$ that intersects with $\alpha_1, \alpha_2, \dots, \alpha_k = \alpha_0$;
note that such $\ell$ exists since $\calP_m$ contains them.
Let $\alpha$ be the first appearance of such a vertex in $\calP_{\ell}$,
and suppose $\alpha = \alpha_s$.
By ($\No$),
the subspace at $\alpha$ in $\calP_{\ell}$
coincides with $X_s^\vee$.
Then define
\begin{align}\label{eq:T' not simple case 1}
\calT' := \calP_0 \circ \calQ_1 \circ \cdots \circ \calQ_{\ell} \circ \calP_{\ell}(\alpha] \circ \calP_m^\vee[\alpha_s, \beta_0] \circ \left(Y_0^\vee \triangleright P_m[\beta_k)\right).
\end{align}
See Figure~\ref{fig:non simple 1}.
\begin{figure}
	\centering
	\begin{tikzpicture}[
	node/.style={
		fill=black, circle, minimum height=5pt, inner sep=0pt,
	},
	I/.style={
		line width = 3pt
	},
	ID/.style={
		line width = 3pt, blue
	},
	NI/.style={
		semithick
	},
	NIA2/.style={
		red,
		semithick
	},
	NIA1/.style={
		red,
		semithick,
		decorate,
		decoration={snake, amplitude=.3mm,segment length=1.5mm,post length=0mm}
	}
	]
	
	\def\mu{a1, a2, a3, a4, a5, a6, a7, a8, a9}
	\def\nu{b1, b2, b3, b4, b5, b6, b7, b8, b9}
	\def\size{1.7cm}
	\def\side{5pt}
	
	\nodecounter{\nu}
	\coordinate (pos);
	\foreach \currentnode in \nu {
		\node[node, below=0 of pos, anchor=center] (\currentnode) {};
		\coordinate (pos) at ($(pos)+(-\size, 0)$);
	}
	\nodecounter{\mu}
	\coordinate (pos) at ($(b1) + (-\size, -\size)$);
	\foreach \currentnode in \mu {
		\node[node, below=0 of pos, anchor=center] (\currentnode) {};
		\coordinate (pos) at ($(pos)+(-\size, 0)$);
	}
	
	\foreach \i / \j in {b1/a1, b2/a2, b8/a8, b9/a9}{
		\draw[NI] (\i) -- (\j);
	}
	
	\foreach \i / \j in {b3/a3, b4/a4, b5/a5, b6/a6, b7/a7}{
		\draw[NIA2] (\i) -- (\j);
	}
	
	\foreach \i / \j in {b2/a1, b9/a8}{
		\draw[I] (\i) -- (\j);
	}
	
	\foreach \i / \j in {b3/a2, b4/a3, b5/a4, b6/a5, b7/a6, b8/a7}{
		\draw[ID] (\i) -- (\j);
	}
	
	\coordinate [above right = 5pt and 5pt] (arb3) at (b3);
	\coordinate [above left = 5pt and 5pt] (alb3) at (b3);
	\coordinate [below right = 5pt and 5pt] (bra2) at (a2);
	\coordinate [below left = 5pt and 5pt] (bla2) at (a2);
	\draw[dotted, very thick] (arb3) -- (alb3) -- (bla2) -- (bra2) -- (arb3);
	
	\coordinate [above right = 5pt and 5pt] (arb8) at (b8);
	\coordinate [above left = 5pt and 5pt] (alb8) at (b8);
	\coordinate [below right = 5pt and 5pt] (bra7) at (a7);
	\coordinate [below left = 5pt and 5pt] (bla7) at (a7);
	\draw[dotted, very thick] (arb8) -- (alb8) -- (bla7) -- (bra7) -- (arb8);
	
	\coordinate [label=above:{$\beta_0$}] () at ($(arb3)!0.5!(alb3)$);
	\coordinate [label=below:{$\alpha_0$}] () at ($(bra2)!0.5!(bla2)$);
	\coordinate [label=above:{$\beta_k$}] () at ($(arb8)!0.5!(alb8)$);
	\coordinate [label=below:{$\alpha_k$}] () at ($(bra7)!0.5!(bla7)$);
	
	\coordinate [label=left:{$\alpha = \alpha_s$}] () at ($(a5) + (-2pt, 0)$);
	
	\foreach \i in {a1, a2, a3, a4, a5, a6, a7, a8, a9, b1, b2, b3, b4, b5, b6, b7, b8, b9} {
		\coordinate [left = \side] (l\i) at (\i);
		\coordinate [right = \side] (r\i) at (\i);
	}
	
	\coordinate [below = 1.5cm] (oPms) at (la5);

	\coordinate [above = 1.5cm] (oPmt) at (lb7);
	\draw[[-{Latex[length=3mm]}, dashed, thick] (oPms) -- node[left]{$\calP_{\ell}$} (la5) -- (lb6) -- (la6) -- (lb7) -- (oPmt);
	
	\coordinate [below = 1.5cm] (nPms) at (ra5);
	\draw[[-{Latex[length=3mm]}, thick] (nPms) -- (ra5) -- (rb5) -- (ra4) -- (rb4) -- (ra3) -- ($(ra3)!0.9!(rb3)$);
	\draw[[-{Latex[length=3mm]}, thick] ($(rb8)!0.1!(ra8)$) -- (ra8) -- (rb9) -- (ra9);

	\draw[[-{Latex[length=3mm]}, dashed, thick] (lb1) -- node[above left = 0 and -4pt]{$\calP_m$} ($(lb1)!0.5!(la1)$);
	\coordinate [label=above:{$\beta(\calP_m)$}] () at (b1);
	\draw[[-{Latex[length=3mm]}, dashed, thick] ($(lb9)!0.4!(la9)$) -- node[above left = 0 and -4pt]{$\calP_m$} ($(lb9)!0.9!(la9)$);
	\coordinate [label=below:{$\alpha(\calP_m)$}] () at (a9);

	\end{tikzpicture}
	\caption{
		Modification in \cref{subsubsec:not simple case 1};
		the definitions of all lines and paths are the same as in Figure~\ref{fig:cycle}.
	}
	\label{fig:non simple 1}
\end{figure}

We show that $\calT'$ is an augmenting space-walk for $I'$.
($\Ai$) is obvious.
Since $X_k^\vee \neq X_k$
and all $\beta_0 \alpha_1, \beta_1 \alpha_2, \dots, \beta_{k-1} \alpha_k$ are rank-2,
we have $Y_0^\vee \neq Y_0$.
Also, by $X_0 = X_k^\vee$,
we have $Y_0 = Y_k^\vee$.
Thus the last space $Y_0^\vee$ of $\calP_m^\vee[\alpha_s, \beta_0]$
is different from $Y_k^\vee$.
By \cref{lem:diff}~(2),
the last space of $Y_0^\vee \triangleright P_m[\beta_k)$ does not include $\ker_I(\alpha(\calP_m))$.
This implies ($\Al$).
By the definitions of $\calP_{\ell}$ and $\alpha$,
the prefix $\calP_0 \circ \calQ_1 \circ \cdots \circ \calQ_{\ell} \circ \calP_{\ell}(\alpha]$ does not intersect with $\alpha_1, \alpha_2, \dots, \alpha_k$ except for $\alpha$.
Hence $\calP_0 \circ \calQ_1 \circ \cdots \circ \calQ_{\ell} \circ \calP_{\ell}(\alpha]$
remains a compatibly-concatenated space-walk for $I'$.
In addition,
since $\alpha_s \beta_{s-1}, \beta_{s-1} \alpha_{s-1}, \dots, \alpha_1 \beta_0$ are distinct,
$\calP_{\ell}(\alpha] \circ \calP_m^\vee[\alpha_s, \beta_0] \circ \left(Y_0^\vee \triangleright P_m[\beta_k)\right)$ forms a single outer space-walk for $I'$,
which is clearly compatible with $\calQ_{\ell}$.
These imply that $\calT'$ is an augmenting space-walk for $I'$.

Let $(I, \calT) \leftarrow (I', \calT')$.
This update can be done in $O(|E|)$ time,
since
we can find such $\calP_{\ell}$ in~\eqref{eq:T' not simple case 1}
and compute the front-propagation in $O(|E|)$ time.
Furthermore
$D_{\rm inner}$ does not increase
and $N$ strictly decreases;
the edge $\alpha_0 \beta_0 = \alpha_k \beta_k$ exits the extended support of the new $\calT$.
Hence $\theta$ strictly decreases.
Return to the initial stage (\cref{subsec:initial stage}).

\subsubsection{At least one of $\beta_0 \alpha_1, \beta_1 \alpha_2, \dots, \beta_{k-1} \alpha_k$ is rank-1}\label{subsubsec:not simple case 2}
Let $r$ be the maximum index with $1 \leq r \leq k$ such that $\beta_{r-1} \alpha_r$ is rank-1.
Then define
\begin{align*}
I' := \left(I \cup P_m[\beta_0)\right) \setminus \{ \alpha_r \beta_r, \alpha_{r+1} \beta_{r+1}, \dots, \alpha_k \beta_k \}.
\end{align*}
\begin{lem}\label{lem:I' 1 case 2}
	$I'$ satisfies {\rm (Deg)}, {\rm (q-Cycle)}, {\rm (VL)}, and $r(I') = r(I)$.
\end{lem}
\begin{proof}
	We can easily see that $I'$ satisfies (Deg), (q-Cycle), and $r(I') = r(I)$.
	We show that $I'$ satisfies (VL).
	We may assume that $\beta_k \alpha_{k+1}$ is a $-$-edge in $I'$,
	namely, $\beta_k \alpha_{k+1}, \beta_{k+1} \alpha_{k+2}, \dots$ are $-$-edges
	and $\beta_0 \alpha_1, \beta_1 \alpha_2, \dots, \beta_{r-1} \alpha_r$ are $+$-edges.
	For each $\alpha, \beta$ belonging to $P_m[\beta_k)$,
	define $U_\alpha^+$ and $V_\beta^-$ as in \cref{lem:I^*}.
	Note $V_{\beta_i}^- = \kerR(A_{\alpha_{i+1}\beta_i})$ if $\beta_i \alpha_{i+1}$ with $i \geq k$ is rank-1.
	For $\alpha, \beta$ belonging to $P_m[\alpha_r, \beta_0]$,
	define $U_\alpha^+$ and $V_\beta^-$ as the propagated spaces of $P_m[\alpha_r, \beta_0] \triangleleft V_{\beta_k}^-$
	at $\alpha$ and at $\beta$,
	respectively.
	Here $V_{\beta_k}^- = Y_k^\vee$ holds by ($\No$).
	Note $U_{\alpha_{i+1}}^+ = \kerL(A_{\alpha_{i+1}\beta_i})$ if $\beta_i \alpha_{i+1}$ with $0 \leq i \leq r-1$ is rank-1.
	
	Next define $U_{\alpha_r}^-$ as any 1-dimensional subspace of $U_{\alpha_r}$
	different from $U_{\alpha_r}^+ = \kerL(A_{\alpha_r \beta_{r-1}})$,
	where $\kerL(A_{\alpha_r \beta_{r-1}})$ is equal to the space of $\calP_m$ at $\alpha_r$.
	Then, for each $\alpha, \beta$ belonging to $P_m[\alpha_r, \beta_0] \circ P_m[\beta_k)$,
	define $U_\alpha^-$ and $V_\beta^+$ as the propated spaces of $U_{\alpha_r}^- \triangleright \left(P_m[\alpha_r, \beta_0] \circ P_m[\beta_k)\right)$
	at $\alpha$ and at $\beta$,
	respectively.
	Here $V_{\beta_0}^+ = Y_0^\vee$ holds,
	which is different from $Y_k^\vee = V_{\beta_k}^-$.
	Hence, by \cref{lem:diff},
	we have $U_\alpha^+ \neq U_\alpha^-$ and $V_\beta^+ \neq V_\beta^-$ for each $\alpha, \beta$.
	One can see that, if $\beta_i \alpha_{i+1}$ with $0 \leq i \leq r-1$ is rank-1 then $V_{\beta_i}^+ = \kerR(A_{\alpha_{i+1}\beta_i})$,
	and if $\beta_i \alpha_{i+1}$ with $i \geq k$ is rank-1 then $U_{\alpha_{i+1}}^- = \kerL(A_{\alpha_{i+1}\beta_i})$.
	This implies~\eqref{eq:++ --}.
	The orthogonal property~\eqref{eq:+-} is satisfied by the construction.
\end{proof}

If $\deg_I(\alpha(\calP_m)) = 1$ or the last edge of $\calP_m$ is rank-1,
then $I'$ satisfies (Path),
which implies that $I'$ is a q-mathcing.
Otherwise the end edge of the path component of $I'$
is rank-2.
We apply the elimination operation to $I'$ from $\alpha(\calP_m)$ so that (Path) holds.
The resulting set, also denoted by $I'$, is a q-mathcing with $r(I') \geq r(I)$ (in fact $r(I') = r(I)$) by \cref{lem:elimination}.

We next modify $\calT$ to obtain an augmenting space-walk $\calT'$ for $I'$
as follows.
Let $\calP_{\ell}$
be the first outer space-walk in $\calT$ that intersects with $\alpha_r, \alpha_{r+1}, \dots, \alpha_k$.
Let $\alpha$ be the first appearance of such a vertex in $\calP_{\ell}$,
and suppose $\alpha = \alpha_s$.
By~($\No$),
the subspace of $U_{\alpha}$ in $\calP_{\ell}$ at $\alpha$
coincides with $X_s^\vee$.
Then define
\begin{align}\label{eq:T' not simple case 2}
\calT' := \calP_0 \circ \calQ_1 \circ \cdots \calQ_{\ell} \circ \calP_{\ell}(\alpha] \circ \calP_m^\vee[\alpha_s, \alpha_r];
\end{align}
see Figure~\ref{fig:non simple 2}.
\begin{figure}
	\centering
	\begin{tikzpicture}[
	node/.style={
		fill=black, circle, minimum height=5pt, inner sep=0pt,
	},
	I/.style={
		line width = 3pt
	},
	ID/.style={
		line width = 3pt, blue
	},
	NI/.style={
		semithick
	},
	NIA2/.style={
		red,
		semithick
	},
	NIA1/.style={
		red,
		semithick,
		decorate,
		decoration={snake, amplitude=.3mm,segment length=1.5mm,post length=0mm}
	}
	]
	
	\def\mu{a1, a2, a3, a4, a5, a6, a7, a8, a9}
	\def\nu{b1, b2, b3, b4, b5, b6, b7, b8, b9}
	\def\size{1.7cm}
	\def\side{5pt}
	
	\nodecounter{\nu}
	\coordinate (pos);
	\foreach \currentnode in \nu {
		\node[node, below=0 of pos, anchor=center] (\currentnode) {};
		\coordinate (pos) at ($(pos)+(-\size, 0)$);
	}
	\nodecounter{\mu}
	\coordinate (pos) at ($(b1) + (-\size, -\size)$);
	\foreach \currentnode in \mu {
		\node[node, below=0 of pos, anchor=center] (\currentnode) {};
		\coordinate (pos) at ($(pos)+(-\size, 0)$);
	}
	
	\foreach \i / \j in {b1/a1, b2/a2}{
		\draw[NI] (\i) -- (\j);
	}
	
	\foreach \i / \j in {b5/a5, b6/a6, b7/a7}{
		\draw[NIA2] (\i) -- (\j);
	}

	\draw[NIA2] (b8) -- node[above left = 0 and -7pt]{$-$} (a8);
	\draw[NIA1] (b9) -- node[above left = 0 and -7pt]{$-$} (a9);
	
	\foreach \i / \j in {b3/a3, b4/a4}{
		\draw[NIA1] (\i) --node[above left = 0 and -7pt]{$+$} (\j);
	}
	
	\foreach \i / \j in {b2/a1, b4/a3, b9/a8}{
		\draw[I] (\i) -- (\j);
	}
	
	\foreach \i / \j in {b3/a2,b5/a4, b6/a5, b7/a6, b8/a7}{
		\draw[ID] (\i) -- (\j);
	}
	
	\coordinate [above right = 5pt and 5pt] (arb3) at (b3);
	\coordinate [above left = 5pt and 5pt] (alb3) at (b3);
	\coordinate [below right = 5pt and 5pt] (bra2) at (a2);
	\coordinate [below left = 5pt and 5pt] (bla2) at (a2);
	\draw[dotted, very thick] (arb3) -- (alb3) -- (bla2) -- (bra2) -- (arb3);
	
	\coordinate [above right = 5pt and 5pt] (arb8) at (b8);
	\coordinate [above left = 5pt and 5pt] (alb8) at (b8);
	\coordinate [below right = 5pt and 5pt] (bra7) at (a7);
	\coordinate [below left = 5pt and 5pt] (bla7) at (a7);
	\draw[dotted, very thick] (arb8) -- (alb8) -- (bla7) -- (bra7) -- (arb8);
	
	\coordinate [label=above:{$\beta_0$}] () at ($(arb3)!0.5!(alb3)$);
	\coordinate [label=below:{$\alpha_0$}] () at ($(bra2)!0.5!(bla2)$);
	\coordinate [label=above:{$\beta_{r-1}$}] () at (b4);
	\coordinate [label=below:{$\alpha_r$}] () at (a4);
	\coordinate [label=above:{$\beta_k$}] () at ($(arb8)!0.5!(alb8)$);
	\coordinate [label=below:{$\alpha_k$}] () at ($(bra7)!0.5!(bla7)$);
	
	\coordinate [label=left:{$\alpha =\alpha_s$}] () at ($(a5) + (-2pt, 0)$);
	
	\foreach \i in {a1, a2, a3, a4, a5, a6, a7, a8, a9, b1, b2, b3, b4, b5, b6, b7, b8, b9} {
		\coordinate [left = \side] (l\i) at (\i);
		\coordinate [right = \side] (r\i) at (\i);
	}
	
	\coordinate [below = 1.5cm] (oPms) at (la5);
	
	\coordinate [above = 1.5cm] (oPmt) at (lb7);
	\draw[[-{Latex[length=3mm]}, dashed, thick] (oPms) -- node[left]{$\calP_{\ell}$}  (la5) -- (lb6) -- (la6) -- (lb7) --(oPmt);
	
	\coordinate [below = 1.5cm] (nPms) at (ra5);
	\draw[[-{Latex[length=3mm]}, thick] (nPms) -- (ra5) -- (rb5) -- (ra4);
	
	\draw[[-{Latex[length=3mm]}, dashed, thick] (lb1) -- node[above left = 0 and -4pt]{$\calP_m$} ($(lb1)!0.5!(la1)$);
	\coordinate [label=above:{$\beta(\calP_m)$}] () at (b1);
	\draw[[-{Latex[length=3mm]}, dashed, thick] ($(lb9)!0.4!(la9)$) -- node[above left = 0 and -4pt]{$\calP_m$} ($(lb9)!0.9!(la9)$);
	\coordinate [label=below:{$\alpha(\calP_m)$}] () at (a9);

	\end{tikzpicture}
	\caption{
		Modification in \cref{subsubsec:not simple case 1};
		the definitions of all lines and paths are the same as in Figure~\ref{fig:cycle}.
	}
	\label{fig:non simple 2}
\end{figure}

\begin{lem}\label{lem:augmenting space-walk not simple case 2}
	$\calT'$ is an augmenting space-walk for $I'$.
\end{lem}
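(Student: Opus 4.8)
The plan is to check the three defining conditions: that $\calT'$ is a compatibly-concatenated space-walk for $I'$, that it satisfies ($\Ai$), and that it satisfies ($\Al$). Condition ($\Ai$) is immediate, since $\calT'$ keeps the whole prefix up to $\calP_0$ and the initial vertex of $\calP_0$ together with its incident $I$-edges is untouched by the modification, so $\ker_{I'}(\beta(\calP_0)) = \ker_I(\beta(\calP_0)) = Y(\calP_0) \neq \{0\}$. The real work lies in ($\Al$) and in the concatenation structure, and throughout I would follow the template of the proof of \cref{lem:T' cycle case 1}, exploiting that the chord edges $\alpha_1\beta_1,\alpha_2\beta_2,\dots,\alpha_k\beta_k$ of the non-simple outer walk $\calP_m$ are, by the definition of an outer walk, isolated rank-2 edges of $I$.

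For ($\Al$), first I would identify $\ker_{I'}(\alpha_r)$. The only $I$-edge incident to $\alpha_r$ is $\alpha_r\beta_r$, which lies in the deleted set $\{\alpha_r\beta_r,\dots,\alpha_k\beta_k\}$, and the only edge of $P_m[\beta_0)$ incident to $\alpha_r$ besides $\alpha_r\beta_r$ is $\beta_{r-1}\alpha_r$; hence in $I'$ the vertex $\alpha_r$ is incident only to $\beta_{r-1}\alpha_r$, which is rank-1 by the maximality of $r$, so $\ker_{I'}(\alpha_r) = \kerL(A_{\alpha_r\beta_{r-1}}) \neq \{0\}$ by~\eqref{eq:kera}. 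On the other hand, in $\calP_m$ the space at $\alpha_r$ is the orthogonal complement, across the rank-1 edge $\beta_{r-1}\alpha_r$, of the space at $\beta_{r-1}$, which by the definition of an outer space-walk is not contained in $\kerR(A_{\alpha_r\beta_{r-1}})$; hence that space is also $\kerL(A_{\alpha_r\beta_{r-1}})$. Since $\calT$ satisfies ($\Al$), we have $X(\calP_m) \neq \ker_I(\alpha(\calP_m))$, so by \cref{lem:diff}~(1), applied to $\calP_m$ and $\calP_m^\vee = P_m \triangleleft \ker_I(\alpha(\calP_m))$, the space $X_r^\vee$ of $\calP_m^\vee$ at $\alpha_r$ differs from the space of $\calP_m$ at $\alpha_r$. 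Therefore $X(\calT') = X_r^\vee \neq \kerL(A_{\alpha_r\beta_{r-1}}) = \ker_{I'}(\alpha_r) \neq \{0\}$, which is ($\Al$).

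For the concatenation structure, I would first verify that $\calP_\ell(\alpha] \circ \calP_m^\vee[\alpha_s,\alpha_r]$ is a single outer space-walk for $I'$: along the appended part $\calP_m^\vee[\alpha_s,\alpha_r]$, the deleted chord edges $\alpha_r\beta_r,\dots,\alpha_{s-1}\beta_{s-1}$ play the role of the non-$I'$ edges and the added edges $\beta_r\alpha_{r+1},\dots,\beta_{s-1}\alpha_s$ play the role of isolated rank-2 edges of $I'$ --- rank-2 by the maximality of $r$, and isolated because the chord edges formerly incident to their endpoints have been deleted --- the underlying walk is simple, and by ($\No$) the space of $\calP_\ell$ at $\alpha_s$ equals $X_s^\vee$, so the join is seamless and compatible with $\calQ_\ell$. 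I would then re-examine the remaining prefix $\calP_0\circ\calQ_1\circ\cdots\circ\calQ_\ell\circ\calP_\ell(\alpha]$ when $I$ is replaced by $I'$, following parts (1)--(4) of the proof of \cref{lem:T' cycle case 1}. The structural facts that drive this are: (i) the chord edges $\alpha_i\beta_i$ are isolated rank-2 in $I$, so no inner space-walk of $\calT$ touches the chord and an outer space-walk touches it only in contiguous runs; (ii) $\calP_\ell$ is the first space-walk of $\calT$ meeting $\{\alpha_r,\dots,\alpha_k\}$, which, with ($\No$) and the irredundancy, controls how the earlier space-walks interact with the deleted edges; and (iii) deleting $\alpha_r\beta_r,\dots,\alpha_k\beta_k$ turns the relevant stretch of $\calP_m$ into a piece of the rank-1 path component of $I'$ produced in \cref{lem:I' 1 case 2}, so a run that ceases to be an outer space-walk for $I'$ can be recast as an inner space-walk for $I'$ and merged compatibly with its neighbours. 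With ($\Ai$) and ($\Al$), this shows $\calT'$ is an augmenting space-walk for $I'$.

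The main obstacle I expect is exactly this last reorganization of the prefix. In \cref{lem:T' cycle case 1} the modified component was already rank-1 and only inner space-walks had to be re-threaded, whereas here the deletion converts a rank-2 chord of a non-simple \emph{outer} walk into part of a rank-1 path component, so pieces occurring in $\calT$ as outer space-walks may have to be reinterpreted as inner space-walks of $\calT'$; checking that the spaces carried along such a piece agree with the valid labeling of $I'$ constructed in \cref{lem:I' 1 case 2}, and that switching from the outer to the inner bookkeeping preserves compatibility and irredundancy, is the delicate point. I would also have to track which occurrence of the recurring vertex $\alpha_k = \alpha_0$ is meant: if $\alpha = \alpha_s$ is that vertex, then $\calP_\ell(\alpha]$ ends at its \emph{first} occurrence in $\calP_m$ while $\calP_m^\vee[\alpha_s,\alpha_r]$ must be read off starting from the \emph{last} occurrence, the two carried spaces agreeing by ($\No$), which is precisely what makes the join legitimate.
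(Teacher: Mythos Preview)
Your proposal is correct and follows essentially the same approach as the paper: verify $(\Ai)$ trivially, obtain $(\Al)$ from $X_r = \kerL(A_{\alpha_r\beta_{r-1}}) = \ker_{I'}(\alpha_r)$ together with \cref{lem:diff}(1), check that $\calP_\ell(\alpha]\circ\calP_m^\vee[\alpha_s,\alpha_r]$ is a single outer space-walk for $I'$, and then re-thread the prefix using the template of \cref{lem:T' cycle case 1}. The paper's only refinement is that it invokes just parts~(2) and~(4) of that template (parts~(1) and~(3) concern $Q^\pm$, which are absent here) and makes explicit the dichotomy you anticipate in your last paragraph: a subwalk of an earlier $\calP_{\ell'}$ shared with $P_m$ either becomes an inner space-walk for $I'$, or---if the elimination from $\alpha(\calP_m)$ removes the newly added edges $\beta_i\alpha_{i+1}$ inside it---its chord edges revert to isolated rank-2 edges and it remains outer.
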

\begin{proof}
	($\Ai$) is obvious.
	By $X_r^\vee \not\subseteq X_r = \kerL(A_{\beta_{r-1} \alpha_r})$,
	we have ($\Al$).
	The suffix $\calP_{\ell}(\alpha] \circ \calP_m^\vee[\alpha_s, \alpha_r]$ forms a single outer space-walk for $I'$,
	since
	$\alpha_s \beta_{s-1}, \beta_{s-1} \alpha_{s-1}, \dots, \beta_r \alpha_r$ are distinct.
	We verify that the prefix $\calP_0 \circ \calQ_1 \circ \cdots \circ \calQ_{\ell} \circ \calP_{\ell}(\alpha]$
	is viewed as a compatibly-concatenated space-walk for $I'$.
	This implies that $\calT'$ is an augmenting space-walk for $I'$.

	By the definitions of $P_{\ell}$ and $\alpha$,
	the prefix $\calP_0 \circ \calQ_1 \circ \cdots \circ \calQ_{\ell} \circ \calP_{\ell}(\alpha]$ does not intersect with $\alpha_r, \alpha_{r+1}, \dots, \alpha_k$ except for $\alpha$.
	By the irredundancy,
	$\calP_0 \circ \calQ_1 \circ \cdots \circ \calQ_{\ell} \circ \calP_{\ell}(\alpha]$ does not meet $\alpha_0 = \alpha_k$ and $\beta_0 = \beta_k$.
	Hence it suffices to consider the case where there is $\ell' < \ell$ such that
	$P_{\ell'}$ intersects with $\alpha_1, \alpha_2, \dots, \alpha_{r-1}, \alpha_{k+1}, \dots, \alpha(\calP_m)$.
	Note that $P_{\ell'} \cap P_m$ forms the disjoint union of several subwalks of $P_{\ell'}$.
	By the same argument as (4) in \cref{lem:T' cycle case 1},
	no edge in $P_{\ell'} \cap P_m[\beta_k)$ is deleted from $I$ via the elimination operation.
	By (2) in \cref{lem:T' cycle case 1},
	each subwalk in $P_{\ell'} \cap P_m$ must consist of rank-2 edges.
	Hence it holds $P_{\ell'} \cap P_m = (P_{\ell'} \cap P_m[\beta_0, \beta_{r-1}]) \cup (P_{\ell'} \cap P_m[\beta_k))$.

	Take a subpath $P$ belonging to $P_{\ell'} \cap P_m$.
	If no edge in $P$ is deleted from $I$ via the elimination operation,
	then,
	by the same argument as in~(2) in \cref{lem:T' cycle case 1},
	the corresponding sequence with spaces to $P$
	forms an inner space-walk for $I'$,
	or constitutes a part of an inner space-walk for $I'$.
	If an edge in $P$ is deleted from $I$ via the elimination operation,
	then
	$P$ belongs to $P_{\ell'} \cap P_m[\beta_0, \beta_{r-1}]$;
	we can assume $P = (\alpha_i \beta_i, \beta_i \alpha_{i+1}, \dots, \alpha_j \beta_j)$ for $1 \leq i < j \leq r-1$.
	Since there is no rank-1 edge in $P$,
	all $\beta_i \alpha_{i+1}, \beta_{i+1} \alpha_{i+2}, \dots, \beta_{j-1} \alpha_j$ are deleted from 
	$I$ via the elimination operation.
	Hence $P$ remains a part of an outer space-walk for $I'$.
\end{proof}

Let $(I, \calT) \leftarrow (I', \calT')$.
This update can be done in $O(|E|)$ time,
since we can find the maximum index $r$ with $1 \leq r \leq k$ such that $\beta_{r-1} \alpha_r$ is rank-1 and such $\calP_{\ell}$ in~\eqref{eq:T' not simple case 2}
in $O(|E|)$ time.
Also
$\theta$ strictly decreases
by the same argument as in~\cref{subsubsec:not simple case 1}.
Return to the initial stage (\cref{subsec:initial stage}).

\section{Concluding remarks}\label{sec:discussion}
This article provides the first combinatorial blow-up-free algorithm for Edmonds' problem
for a $(2 \times 2)$-type generic partitioned matrix.
We end this paper with the following remarks.

\paragraph{Bit-complexity.}
We verify that,
in the case of $\F = \Q$,
the required bit-size during the algorithm
is polynomially bounded.
Without loss of generality,
we assume that each entry of $\A$ is an integer.

Consider the algorithm for finding an augmenting space-walk.
During the algorithm,
a 1-dimensional vector subspace $Z\subseteq \F^2$
is represented as a nonzero vector $z \in Z$.
In the initial phase,
for each $\alpha \beta \in I$ such that $\alpha \beta$ is rank-1 and $\deg_I(\beta) = 1$,
we can take an integer nonzero vector $y_\beta \in \kerR(\A)$ with the bit-length bounded in a polynomial of the bit-size of $\A$.
In the update phase,
we compute $X^{\perp_{\alpha \beta}}$ and $Y^{\perp_{\alpha \beta}}$ for $X \subseteq U_\alpha$ and $Y \subseteq V_\beta$,
respectively.
This can be simulated as follows.
Here we only consider the case of computing $X^{\perp_{\alpha \beta}}$.
Suppose $\rank \A = 1$ and that we have an integer nonzero vector $x \in X$ at hand.
Then $X^{\perp_{\alpha \beta}} = V_\beta$ if $x \in \kerL(\A)$,
and $X^{\perp_{\alpha \beta}} = \kerR(\A)$ if $x \not\in \kerL(\A)$.
Suppose $\rank \A = 2$
and
$\A = \left[
\begin{array}{cc}
a & b\\
c & d
\end{array}
\right]$ and $x =
\left[\begin{array}{c}
s\\
t
\end{array}
\right]$.
Then a nonzero vector $y =
\left[\begin{array}{c}
-(cs + dt)\\
as + bt
\end{array}
\right]$
belongs to $X^{\perp_{\alpha \beta}}$.
By
$\log(|cs + dt|) = \log |c| + \log |s|$ if $dt = 0$,
$\log(|cs + dt|) = \log |d| + \log |t|$ if $cs = 0$,
and $\log(|cs + dt|) \leq \log\left(|csdt|\left(1/|cs| + 1/|dt|\right)\right) \leq \log 2 + \log |c| + \log |d| + \log |s| + \log |t|$,
we have $\bit(y) = \bit(\A) + \bit(x) + O(1)$,
where $\bit(\cdot)$ is the bit-length of the argument.
Hence the bit-length is polynomially bounded in finding an augmenting space-walk.

The case of the augmentation procedure is similar.
Thus, in the whole process,
the bit-size is polynomially bounded.

\paragraph{On general generic partitioned matrices.}
A {\it generic partitioned matrix}~\cite{SIMAA/IIM94} is a matrix $A$ of the form~\eqref{eq:A}
with an $m_\alpha \times n_\beta$ matrix $\A$ over $\F$ for $\alpha$ and $\beta$.
Iwata and Murota~\cite{SIMAA/IM95} observed $\rank A \neq \ncrank A$ for a generic partitioned matrix $A$ in general;
in particular,
they gave such a matrix consisting only of $2 \times 2$ and $3 \times 2$ blocks.
It is known~\cite{SIAGA/H19}
that Edmonds' problem is equivalent to the problem of computing the rank of a generic partitioned matrix.

Our matching concept can be easily generalized to one for a generic partitioned matrix.
We hope that this matching notion can lead to a blow-up-free algorithm for a generic partitioned matrix
such that its rank and nc-rank coincide,
and hence to a polynomial-time algorithm for Edmonds' problem.

\paragraph{A simpler and faster algorithm.}
There exists a combinatorial $O(\mu\nu \min\{ \mu, \nu \})$-time
algorithm for Edmonds' problem for a $(2 \times 2)$-type generic partitioned matrix of the form~\eqref{eq:A},
which is faster than the algorithm proposed in this paper.
An outline of the faster algorithm is as follows:
\begin{enumerate}
	\item
	Compute an augmenting walk $T := (\beta_1\alpha_1, \alpha_1\beta_2, \dots, \beta_k\alpha_k)$ (without spaces) by the breath-first search version of the algorithm in \cref{subsec:finding}.
	\item
	For $i = k, k-1,\dots, 2$,
	update $I$ as
	\begin{align*}
	I \leftarrow
	\begin{cases}
	I \cup \{\beta_i \alpha_i\} & \text{if $\alpha_i$ is incident only to $\alpha_i\beta_{i-1}$ in $I$ and $\alpha_i \beta_{i-1}$ is rank-2},\\
	I \setminus \{\alpha_i\beta_{i-1}\} & \text{if $\alpha_i$ is incident to $\beta_i\alpha_i$ and $\alpha_i \beta_{i-1}$ in $I$},\\
	(I \cup \{ \alpha_i \beta_i \}) \setminus \{\alpha_i \beta_{i-1}\}& \text{otherwise}.
	\end{cases}
	\end{align*}
	Then add $\beta_1 \alpha_1$ to $I$.
	\item
	Apply the elimination operation to $I$ in \cref{subsubsec:elimination},
	and output the resulting $I$.
\end{enumerate}
In fact, our algorithm in \cref{sec:augmentation}
coincides with this simpler algorithm
if
($\No$) and ($\Ni$) always hold
and redundant eliminations are omitted.
The proof of the validity of this algorithm is considerably more complicated.
Hence we presented the current version of the algorithm.

\section*{Acknowledgments}
We thank the anonymous reviewers of for their helpful comments.
The authors were supported by JSPS KAKENHI Grant Number JP17K00029.
The first author was supported by JST PRESTO Grant Number JPMJPR192A, Japan.
The second author was supported by
JSPS KAKENHI Grant Numbers JP19J01302, 20K23323, 20H05795, Japan.
This is a post-peer-review, pre-copyedit version of an article published in Mathematical Programming.
The final authenticated version is available online at: \url{https://doi.org/10.1007/s10107-021-01676-5}.

\end{document}